\theoremstyle{plain}
\newtheorem{theorem}{Theorem}[section]
\newtheorem{lemma}[theorem]{Lemma}
\newtheorem{claim}[theorem]{Claim}
\theoremstyle{definition} 
\newtheorem{remark}[theorem]{Remark}
\def\@gifnextchar#1#2#3{\let\@tempe#1\def\@tempa{#2}\def\@tempb{#3}%
  \futurelet\@tempc\@gifnch}
\def\@gifnch{\ifx\@tempc\@sptoken\let\@tempd\@tempb%
  \else\ifx\@tempc\@tempe\let\@tempd\@tempa\else\let\@tempd\@tempb\fi\fi\@tempd}
\def\SK@set#1{\left\{#1\right\}}
\def\SK@@set#1#2{\{#1\,:\,
    \begin{array}{@{}l@{}}#2\end{array}
\}}
\def\SK@mset#1{\left\{\!\!\left\{#1\right\}\!\!\right\}}
\def\SK@@mset#1#2{\{\!\!\{#1\,:\,
    \begin{array}{@{}l@{}}#2\end{array}
\}\!\!\}}
\def\BIG@set#1{\Big\{#1\Big\}}
\def\BIG@@set#1#2{\Big\{#1\:\Big|\:
    \begin{array}{@{}l@{}}#2\end{array}
\Big\}}
\newcommand{\Set}[1]{\@gifnextchar\bgroup{\SK@@set{#1}}{\SK@set{#1}}}
\newcommand{\Mset}[1]{\@gifnextchar\bgroup{\SK@@mset{#1}}{\SK@mset{#1}}}
\newcommand{\Bigset}[1]{\@gifnextchar\bgroup{\BIG@@set{#1}}{\BIG@set{#1}}}
\title{Global information from local observations of the noisy voter model on a graph}
\author{Itai Benjamini, 
Hagai Helman Tov, Maksim Zhukovskii
}
\affil{Weizmann Institute of Science}
\date{}
\begin{document}

\maketitle

\begin{abstract}
We observe the outcome of the discrete time noisy voter model at a single vertex of a graph. We show that certain pairs of graphs can be distinguished by the frequency of repetitions in the sequence of observations. We prove that this statistic is asymptotically normal and that it distinguishes between (asymptotically) almost all pairs of finite graphs. We conjecture that the noisy voter model distinguishes between any two graphs other than stars.
\end{abstract}

\section{Introduction}

In~\cite{BL} the following question was asked. Suppose that we can observe a random process on a graph only locally. Can a non-trivial global observation be distilled? In particular, suppose that we observe a random walk on a graph, but we can only observe it locally. What properties of the graph can be inferred from this? The latter question was partially answered in~\cite{BKL}. It was shown that, by observing the return time sequence of a random walk at the origin in a connected graph $G$, it is possible to determine an eigenvalue $\lambda$ (but not its multiplicity) of $G$ under some additional restrictions on $G$. In this paper, we observe the outcomes at a single vertex of a related process called noisy voter model. It turns out that it is in some sense more efficient than observing return times of a random walk (see Section~\ref{sc:last}). We even conjecture that any pair of graphs other than stars can be distinguished by such observations.  

The voter model was originally introduced to model interactions of two competing for territory populations~\cite{Clifford}, and some of its variants are also useful in genetics~\cite{Sawyer}. This model has been widely studied in the case of infinite graphs. One of the first rigorous study of its behavior on finite graphs was made in~\cite{Donnelly}. In that paper, the authors studied the original continuous time version. The discrete time version of the model was studied in, e.g.,~\cite{Hassin,Nakata}. In~\cite{Granovsky}, noise was introduced in the basic (continuous time) voter model, and a variant of the discrete time noisy voter model was studied in~\cite{Pymar}. 

As mentioned above, we address the following question: is it possible to reconstruct the graph or determine some global properties from an infinite sequence of observations of discrete time voter model at a given vertex $u$? Note that, when there is no noise, the voter model reaches a consensus on a finite graph. In particular, it means that with probability 1 there exists a time moment $t_0$ such that observations at moments $t\geq t_0$ are all equal (actually, if the graph is bipartite, then this is true for time moments with the same parity). Therefore, we assume that a positive noise is introduced in the model. For more details on particle systems see, e.g.,~\cite{Liggett}.\\

Consider a connected graph $G$ (not necessarily finite) and a real number $\varepsilon\in(0,1)$. For a vertex $u\in V(G)$ we denote by $N_G(u)$ the set of all neighbors of $u$ in $G$. Consider the discrete time {\it noisy voter model (NVM)} on $G$ with observations $(X_t(u)=X_t(G,u),\,t\in\mathbb{Z}_+)$ at $u\in V(G)$. The model is defined as follows. Initially, at time $t=0$, all vertices are assigned with independent Bernoulli random variables $X_0(u)\sim\mathrm{Bern}(1/2)$, $u\in V(G)$. Then, for every $t\in\mathbb{N}$, every $u\in V(G)$ (independently of all the other vertices), with probability $\varepsilon$, gets a random {\it opinion} $Y_t(u)\sim\mathrm{Bern}(1/2)$, or, with probability $1-\varepsilon$, chooses an opinion of one of its neighbors chosen uniformly at random. In other words, 
$$
X_t(u)=\xi_t(u)Y_t(u)+(1-\xi_t(u))\sum_{v\in N_G(u)}I(\eta_t(u)=v)X_{t-1}(v),
$$
where $\xi_t(u)\sim\mathrm{Bern}(\varepsilon)$ and $\eta_t(u)\sim U(N_G(u))$ are independent of each other and of all $X$'s and $Y$'s.

In this paper, we mainly focus on the following question. Given two rooted graphs $(G_1,u_1)$, $(G_2,u_2)$, can we distinguish between them by observing the output of NVM on $G_1$ and $G_2$ at $u_1$ and $u_2$ respectively? More formally, we say that $(G_1,u_1)$, $(G_2,u_2)$ are {\it NVM-distinguishable} (or, simply, distinguishable) if there exist a sequence of functions $S_t:\mathbb{R}^t\to\mathbb{R}$, $t\in\mathbb{Z}_+$, and a sequence of non-overlapping sets $\mathcal{S}_1(t),\mathcal{S}_2(t)\subset\mathbb{R}$, $t\in\mathbb{Z}_+$, such that
$$
{\sf P}\biggl[S_t(X_i(G_1,u_1),\,i\leq t)\in\mathcal{S}_1(t),\,S_t(X_i(G_2,u_2),\,i\leq t)\in\mathcal{S}_2(t)\biggr]\to 1\quad\text{as}\quad t\to\infty.
$$
When $S=(S_t,\, t\in\mathbb{Z}_+)$ is such a sequence, we also say that $(G_1,u_1)$, $(G_2,u_2)$ are {\it distinguishable by $S$}.\\

We answer this question positively for several graph families. The desired statistic that we use to distinguish is the average number of repetitions $X_i(u)=X_{i+d}(u)$ denoted by 
$$
S^{(d)}_t:=\frac{1}{t-d}\sum_{i=1}^{t-d}I(X_i(u)=X_{i+d}(u)),
$$
for a fixed parameter $d\in\mathbb{N}$. We also denote $S^{(d)}=(S^{(d)}_t,\, t\in\mathbb{Z}_+)$. In what follows, for $t\in\mathbb{Z}_+$, when $d$ is clear from the context, we omit the superscript and write simply $S_t$.

First of all, we prove that any two non-isomorphic complete graphs are distinguishable. 

\begin{theorem}
Let $u_1,u_2$ be vertices of $G_1=K_{n_1}$ and $G_2=K_{n_2}$. Then $(G_1,u_1)$ and $(G_2,u_2)$ are distinguishable either by $S^{(1)}$ or by $S^{(2)}$.
\label{thm:cliques}
\end{theorem}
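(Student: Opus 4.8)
The plan is to reduce the claim to a short computation: determine, for each $n\ge 2$ and each lag $d\in\{1,2\}$, the number $p_d(n)$ to which $S^{(d)}_t$ converges when the NVM runs on $K_n$, and then check that $n$ is recovered from the pair $(p_1(n),p_2(n))$.

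First I would observe that the configuration process $W_t=(X_t(v))_{v\in V(K_n)}$ is a Markov chain on the finite set $\{0,1\}^{V(K_n)}$ whose transition matrix is strictly positive: to go from any $w$ to any $w'$ it is enough that every vertex resamples (each with probability $\varepsilon$) to its prescribed value (each with probability $1/2$), so $\mathsf P[W_{t+1}=w'\mid W_t=w]\ge(\varepsilon/2)^{n}>0$. Hence the chain is irreducible and aperiodic and has a unique stationary law $\pi_n$. Since $I(X_i(u)=X_{i+d}(u))$ is a function of the tuple $(W_i,W_{i+1},\dots,W_{i+d})$, the ergodic theorem for finite Markov chains, applied to the corresponding lag chain, gives --- for the Bernoulli$(1/2)$ start used in the definition of the NVM, or from any start ---
$$
S^{(d)}_t\ \xrightarrow{\ \mathrm{a.s.}\ }\ p_d(n):=\mathsf P_{\pi_n}\!\bigl[X_i(u)=X_{i+d}(u)\bigr],\qquad t\to\infty .
$$
So it suffices to show that $n_1\ne n_2$ forces $p_1(n_1)\ne p_1(n_2)$ or $p_2(n_1)\ne p_2(n_2)$: then $\mathcal S_1(t)$ and $\mathcal S_2(t)$ can be taken to be the two disjoint open intervals of radius $\tfrac13|p_d(n_1)-p_d(n_2)|$ about $p_d(n_1)$ and $p_d(n_2)$ for the separating value of $d$, and the required probability tends to $1$.

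Second, I would compute $p_d(n)$ through the $\pm1$-valued variables $Z_t(v)=2X_t(v)-1$. Since $I(a=b)=\tfrac12\bigl(1+(2a-1)(2b-1)\bigr)$ for $a,b\in\{0,1\}$, one has $p_d(n)=\tfrac12\bigl(1+r_d(n)\bigr)$ with $r_d(n)=\mathsf E_{\pi_n}[Z_t(u)Z_{t+d}(u)]$. Substituting the update rule $Z_t(v)=\xi_t(v)(2Y_t(v)-1)+(1-\xi_t(v))Z_{t-1}(\eta_t(v))$ into products and using that the variables $\xi$, $Y$, $\eta$ are independent of one another and of the past --- so any term carrying a lone factor $2Y-1$ has mean zero --- produces a closed linear system in the stationary correlations $q=\mathsf E_{\pi_n}[Z_t(u)Z_t(v)]$ (for any $u\ne v$), $r_d$, and the mixed quantity $m_d=\mathsf E_{\pi_n}[Z_t(u)Z_{t+d}(v)]$, $u\ne v$. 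The same-time equation is $q=(1-\varepsilon)^2\bigl(c_n+(1-c_n)q\bigr)$, where $c_n=\mathsf P[\eta_t(u)=\eta_t(v)]=\frac{n-2}{(n-1)^2}$ is the chance that $u$ and $v$ copy the same vertex; the time-shift equations are $r_d=(1-\varepsilon)m_{d-1}$ and $m_d=\frac{1-\varepsilon}{n-1}\bigl(r_{d-1}+(n-2)m_{d-1}\bigr)$, with $r_0=1$ and $m_0=q$. Solving gives $q=\frac{(1-\varepsilon)^2c_n}{1-(1-\varepsilon)^2(1-c_n)}\in[0,1)$ and then
$$
r_1(n)=(1-\varepsilon)\,q,\qquad r_2(n)=\frac{(1-\varepsilon)^2}{n-1}\bigl(1+(n-2)q\bigr)=(1-\varepsilon)\,r_1(n)+\frac{(1-\varepsilon)^2}{n-1}\,(1-q).
$$

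Finally I would conclude. If $p_1(n_1)=p_1(n_2)$ then $r_1(n_1)=r_1(n_2)$, so $q(n_1)=q(n_2)=:q_0$ because $q\mapsto(1-\varepsilon)q$ is injective; substituting into the last identity gives
$$
p_2(n_1)-p_2(n_2)=\frac{(1-\varepsilon)^2(1-q_0)}{2}\left(\frac{1}{n_1-1}-\frac{1}{n_2-1}\right),
$$
which is nonzero whenever $n_1\ne n_2$, since $q_0<1$. Hence $n_1\ne n_2$ does force $p_1$ or $p_2$ to differ, and by the reduction above $(G_1,u_1)$ and $(G_2,u_2)$ are distinguished by $S^{(1)}$ or by $S^{(2)}$. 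The only step that is not bookkeeping is the correlation computation: the recursion for $r_d$ does not close on itself but feeds into the mixed vertex--time correlation $m_d$, and one has to handle the collision event $\{\eta_t(u)=\eta_t(v)\}$ correctly --- its probability $c_n=\frac{n-2}{(n-1)^2}$ is the one place where the size of the clique enters the answer, and it is also why passing to the further lag $d=2$ suffices to pin $n$ down once $d=1$ fails to separate.
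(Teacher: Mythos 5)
Your proposal is correct, and it reaches the paper's conclusion by a genuinely different route. The paper first establishes a general tool (its Theorem~\ref{th:NVM_main} together with Lemma~\ref{lm:lll}): $S^{(d)}_t$ converges in probability to $\frac12(1+p_d)$, where $p_d$ is the probability that the genealogical paths $\pi_t$ and $\pi_{t+d}$ in $\mathcal{D}_G$ coalesce; it then evaluates $p_1$ and $p_2$ for $K_n$ by decomposing over the first meeting point of the two ancestries. You replace the first step by the ergodic theorem for the finite configuration chain $W_t$ (legitimate here since the transition matrix is strictly positive, though it would not cover the paper's infinite examples such as $\mathbb{Z}^d$), and you replace the genealogical computation by solving the linear system for the stationary correlations $q$, $r_d$, $m_d$ of the $\pm1$ spins. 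Your identity $r_2=(1-\varepsilon)r_1+\frac{(1-\varepsilon)^2(1-q)}{n-1}$ is, after substituting $r_1=(1-\varepsilon)q$, exactly the paper's relation $p_2=\frac{(1-\varepsilon)^2}{n-1}+(1-\varepsilon)\frac{n-2}{n-1}p_1$, and your closing argument matches theirs --- with the bonus that your explicit bound $q<1$ justifies the nonvanishing of the coefficient, a point the paper leaves implicit (it needs $p_1\neq 1-\varepsilon$). One phrasing quibble: the term $\xi_t(u)\xi_t(v)(2Y_t(u)-1)(2Y_t(v)-1)$ does not carry a \emph{lone} factor $2Y-1$, but it also vanishes because the two opinions are independent, so nothing is lost. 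Your approach is more elementary and self-contained for finite graphs; the paper's buys a single convergence statement and a single probabilistic quantity ($p_d$ as a coalescence probability) that it reuses uniformly for cycles, trees, lattices and random graphs.
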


The proof is given in Section~\ref{sc:cliques}. The same is true for cycles and integer lattices.

\begin{theorem}
Let $u_1,u_2$ be vertices of $G_1=C_{n_1}$ and $G_2=C_{n_2}$ respectively, $n_1\neq n_2$. Then $(G_1,u_1)$ and $(G_2,u_2)$ are distinguishable either by $S^{(1)}$ or by $S^{(2)}$.
\label{thm:cycles}
\end{theorem}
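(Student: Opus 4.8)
The structure mirrors the proof of Theorem~\ref{thm:cliques}. The first step is to show that $S^{(d)}_t$ converges. The configuration process $(X_t(v))_{v\in V(C_n)}$ is a finite, irreducible, aperiodic Markov chain — its transition matrix has strictly positive entries, since from any configuration every vertex can independently resample to any prescribed value with positive probability — so by the ergodic theorem $S^{(d)}_t\to q_d(C_n):=\tfrac12\bigl(1+c_d(C_n)\bigr)$ almost surely, where $c_d(C_n)=\lim_{t\to\infty}\mathbb E\bigl[\sigma_t(u)\sigma_{t+d}(u)\bigr]$ and $\sigma_t(v):=2X_t(v)-1\in\{-1,1\}$. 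Hence it is enough to take $\mathcal S_i(t)$ to be a small fixed interval around $q_d(C_{n_i})$ and to prove that $q_1$ or $q_2$ separates $C_{n_1}$ from $C_{n_2}$ whenever $n_1\ne n_2$.

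The second step reduces $c_d$ to the stationary spatial correlations $r(u,w):=\lim_t\mathbb E[\sigma_t(u)\sigma_t(w)]$, with $r(u,u)=1$. From $\mathbb E[\sigma_t(v)\mid\mathcal F_{t-1}]=(1-\varepsilon)\,\tfrac1{\deg v}\sum_{v'\sim v}\sigma_{t-1}(v')$, conditioning successively on the past gives $c_d(C_n)=(1-\varepsilon)^d\sum_w P^d(u,w)\,r(u,w)$, where $P$ is the simple random walk transition matrix on $C_n$; and since distinct vertices are updated with independent randomness, $r$ is the unique solution of the linear system $r(u,w)=\tfrac{(1-\varepsilon)^2}{\deg u\deg w}\sum_{v\sim u,\,v'\sim w}r(v,v')$ for $u\ne w$. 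On $C_n$, vertex-transitivity and the reflection symmetry force $r(u,w)$ to depend only on the cyclic distance; writing $b_k$ for its value at signed distance $k$ (so $b_0=1$ and $b_k=b_{-k}=b_{n-k}$), the system becomes $b_k=\tfrac{(1-\varepsilon)^2}{4}(2b_k+b_{k-2}+b_{k+2})$ for $k\not\equiv0\pmod n$. Treating the equality at $k=0$ as an inhomogeneity supported at the origin and taking the discrete Fourier transform on $\mathbb Z_n$ diagonalizes the recursion, giving $\widehat b_j=\delta\,(1-(1-\varepsilon)^2\cos^2(2\pi j/n))^{-1}$ with $\delta$ fixed by $b_0=1$, and hence the closed form $b_k(n)=\bigl(\sum_j w_j\cos(2\pi jk/n)\bigr)\big/\bigl(\sum_j w_j\bigr)$ with $w_j:=(1-(1-\varepsilon)^2\cos^2(2\pi j/n))^{-1}$.

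To evaluate these sums I would use the Poisson-kernel identity $\sum_{j=0}^{n-1}(1-c\cos(2\pi j/n))^{-1}=\tfrac{n}{\sqrt{1-c^2}}\cdot\tfrac{1+\rho^n}{1-\rho^n}$, valid for $|c|<1$ with $\rho=\bigl(1-\sqrt{1-c^2}\bigr)/c$, together with the partial-fraction expansions of $w_j$ and $w_j\cos(2\pi j/n)$ in $\cos(2\pi j/n)$. I expect this to yield, with $\rho:=\bigl(1-\sqrt{\varepsilon(2-\varepsilon)}\bigr)/(1-\varepsilon)\in(0,1)$,
\[
c_1(C_n)=\frac{2\rho^{\,n}}{1+\rho^{\,2n}}\ \ (n\text{ odd}),\qquad c_1(C_n)=0\ \ (n\text{ even}),
\]
together with $c_2(C_n)=(1-\varepsilon)^2\bigl(1+b_2(n)\bigr)/2$, where for even $n$ the value $b_2(n)$ is an affine, strictly decreasing function of $(\rho^{\,n}-1)/(\rho^{\,n}+1)$. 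The theorem then follows by cases. If $n_1,n_2$ have opposite parity, $c_1$ is positive on the odd cycle and zero on the even one, so $S^{(1)}$ distinguishes. If both are odd, $c_1(C_n)=2/(\rho^{-n}+\rho^{\,n})$ is strictly decreasing in $n$ since $0<\rho<1$, so again $S^{(1)}$ distinguishes. If both are even, $S^{(1)}$ equals $\tfrac12$ for both and is useless, but $c_2(C_n)$ is strictly monotone in $n$ (again because $0<\rho<1$), so $S^{(2)}$ distinguishes.

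The main obstacle is the closed-form evaluation of the Fourier sums defining $b_1(n)$ and $b_2(n)$ and the verification that the resulting functions of $n$ are strictly monotone on the \emph{whole} relevant range, not merely eventually, so that no small cycles need a separate argument; here the explicit dependence on $\rho^{\,n}$ with $0<\rho<1$ is what makes monotonicity transparent. A conceptual point worth isolating is that $S^{(1)}$ alone cannot separate two bipartite cycles — this is precisely why the statement offers a choice between $S^{(1)}$ and $S^{(2)}$ — so one genuinely needs the second-neighbour correlation $b_2(n)$, which varies with $n$ even when $C_n$ is bipartite, to settle the even–even case.
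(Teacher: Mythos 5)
Your route is sound and genuinely different from the paper's. Note first that your $c_d$ coincides with the paper's coalescence probability $p_d$: conditionally on the two history paths not meeting, the two observed opinions are independent fair coins, so $\lim_t\mathbb{E}[\sigma_t(u)\sigma_{t+d}(u)]=p_d$, and likewise your $r(u,w)$ is the meeting probability of two terminable walks started at $u$ and $w$. For the parity case the paper argues exactly as you do ($p_1=0$ on even cycles, $p_1\ge((1-\varepsilon)/2)^n>0$ on odd ones). For two cycles of equal parity, however, the paper avoids all computation: it lifts the two history walks to $\mathbb{Z}$, notes that their difference moves by $0,\pm2$, and uses path continuity to show that hitting a nonzero multiple of $n_2>n_1$ forces a prior visit to $\pm 2n_1$, plus a positive-probability event where the walk reaches $2n_1$ and then terminates before reaching $0$ or $2n_2$; this gives strict monotonicity of $p_2$ within a parity class with no closed form. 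You instead solve the stationary two-point correlation system exactly by Fourier analysis on $\mathbb{Z}_n$. The step you flag as the main obstacle does in fact close: your recursion and the formula $\widehat b_j=\delta\,w_j$ are correct, and the Poisson-kernel summation gives $b_k=(\rho^k+\rho^{N-k})/(1+\rho^{N})$ for even $k$, with $N=n$ for even $n$ and $N=2n$ for odd $n$, while $b_1=(\rho^{n-1}+\rho^{n+1})/(1+\rho^{2n})$ for odd $n$; combined with the identity $(1-\varepsilon)(1+\rho^2)=2\rho$ this yields exactly your $c_1(C_n)=2\rho^n/(1+\rho^{2n})$, and $b_2(n)-\rho^2=\rho^{N-2}(1-\rho^4)/(1+\rho^{N})$ is strictly decreasing in $N$, so monotonicity holds for all $n$ in the relevant range, not merely eventually. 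Two minor points: $b_2$ is an \emph{increasing} affine function of $(\rho^N-1)/(\rho^N+1)$, which is itself decreasing in $n$ (you have the direction of the affine map reversed, though the conclusion stands); and your ergodic-theorem justification of the convergence of $S^{(d)}_t$ is fine for finite cycles but is subsumed by the paper's Lemma~\ref{lm:lll}, which also covers infinite graphs. In exchange for the computation, your approach buys the exact value of $p_d$ on $C_n$ and a quantitative separation, which the paper's soft coupling never produces.
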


\begin{theorem}
Let $u_1,u_2$ be vertices of $G_1=\mathbb{Z}^{d_1}$ and $G_2=\mathbb{Z}^{d_2}$ respectively, $d_1\neq d_2$. Then $(G_1,u_1)$ and $(G_2,u_2)$ are distinguishable by $S^{(2)}$.
\label{thm:lattices}
\end{theorem}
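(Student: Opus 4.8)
Let me think about what $S^{(2)}_t$ converges to. By the ergodic theorem (the NVM with $\varepsilon > 0$ is a Markov chain on configurations, though on infinite graphs I need to work with the stationary version), $S^{(2)}_t \to \mathsf{P}[X_0(u) = X_2(u)]$ almost surely, or at least in probability. So the whole game is: compute $p(G) := \mathsf{P}[X_0(u) = X_2(u)]$ in the stationary regime for $G = \mathbb{Z}^d$, show it is a function of $d$ alone (by vertex-transitivity this is automatic), and show $p(\mathbb{Z}^{d_1}) \neq p(\mathbb{Z}^{d_2})$ when $d_1 \neq d_2$.

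**Step 1: Reduce to a correlation computation.** Since $X_t(u) \in \{0,1\}$, write $Z_t(u) := 2X_t(u) - 1 \in \{-1,1\}$. Then $\mathsf{P}[X_0(u) = X_2(u)] = \tfrac12(1 + \mathbb{E}[Z_0(u) Z_2(u)])$. The update rule is linear-ish: $Z_t(u)$ equals $\pm 1$ uniformly with probability $\varepsilon$, else equals $Z_{t-1}(v)$ for a uniform neighbor $v$. Taking expectations, one gets that the correlation function $\rho_t(u,v) := \mathbb{E}[Z_t(u) Z_t(v)]$ and cross-time correlations satisfy a closed linear recursion driven by the transition operator $P$ of simple random walk on $\mathbb{Z}^d$: schematically $\mathbb{E}[Z_t(u) Z_{t-1}(u)] = (1-\varepsilon)(P\rho_{t-1})(u,u)$-type expressions, and in the stationary state everything is expressible via the Green's-function-like quantities $g_k := (\text{return probability contributions of } (1-\varepsilon)^2 P^2 \text{ etc.})$. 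Concretely I expect $p(\mathbb{Z}^d)$ to be a rational function of $\varepsilon$ and of the numbers $\sum_{n\geq 0}(1-\varepsilon)^{2n} \mathsf{P}[\text{SRW returns to }0\text{ at time }2n]$ and the like — i.e., values of the resolvent of SRW on $\mathbb{Z}^d$ at the point $(1-\varepsilon)^2$ (and at time-lag $2$, also the two-step return probability $1/(2d)$ appears directly).

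**Step 2: Set up and solve the stationary recursion.** Let $c_t := \mathbb{E}[Z_0(u) Z_t(u)]$ in the stationary NVM. Conditioning on the update at time $t$: with prob $\varepsilon$ the new opinion is fresh noise (contributing $0$), with prob $1-\varepsilon$ it copies a uniform neighbor's time-$(t-1)$ value, so $c_t = (1-\varepsilon)\,\mathbb{E}[Z_0(u)\cdot \text{avg}_{v\sim u} Z_{t-1}(v)]$. The quantity $\mathbb{E}[Z_0(u) Z_{t-1}(v)]$ for $v$ near $u$ requires tracking the spatial correlation profile $r_{s}(x) := \mathbb{E}[Z_0(u) Z_s(u+x)]$, which satisfies $r_s = (1-\varepsilon) P r_{s-1}$ for $s \geq 1$ (as operators in $x$), with the $s=0$ profile $r_0(x) = \mathbb{E}[Z_0(u)Z_0(u+x)]$ being itself the equilibrium spatial correlation, determined by its own fixed-point equation $r_0 = \varepsilon^2\delta_0\cdot(\ldots) + (1-\varepsilon)^2 P_{\mathrm{pair}} r_0 + \ldots$ — i.e., a discrete-Laplace-type equation whose solution is $r_0(x) = C_d \cdot (\text{resolvent of SRW at parameter }(1-\varepsilon)^2)(0,x)$ plus the diagonal $r_0(0) = 1$. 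Then $c_2 = (1-\varepsilon)^2 (P^2 r_0)(0) = (1-\varepsilon)^2\big[\tfrac{1}{2d} r_0(0) + (1 - \tfrac{1}{2d})(\text{nearby off-diagonal }r_0\text{ values})\big]$, all of which are explicit functions of $d$.

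**Step 3: Monotonicity/injectivity in $d$.** Having an explicit formula $p(\mathbb{Z}^d) = f(d,\varepsilon)$, I must show $d \mapsto f(d,\varepsilon)$ is injective (for every fixed $\varepsilon \in (0,1)$, or at least show non-equality for $d_1 \neq d_2$). The leading behavior should be $p(\mathbb{Z}^d) = \tfrac12 + \tfrac{(1-\varepsilon)^2}{2}\cdot\tfrac{1}{2d}\cdot r_0(0) + O(1/d^2)$-ish with $r_0(0)=1$, since as $d\to\infty$ distinct vertices decorrelate and only the probability $1/(2d)$ that the two-step walk returns to $u$ keeps $X_2(u)$ tied to $X_0(u)$; the correction terms from off-diagonal correlations are lower order in $1/d$. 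So $p(\mathbb{Z}^d)$ is strictly decreasing in $d$ for all large $d$, and the finitely many small $d$ can be checked directly from the formula. I expect this step — proving global injectivity rather than just asymptotic monotonicity — to be the main obstacle, because the resolvent of SRW on $\mathbb{Z}^d$ has no elementary closed form and one must argue via the sign of a difference of Watson-type integrals; the cleanest route is probably to establish strict monotonicity of the whole function via a coupling or a term-by-term comparison of the return-probability series $\sum_n (1-\varepsilon)^{2n}\mathsf{P}_d[\mathrm{SRW}_{2n}=0]$, which is manifestly strictly decreasing in $d$ since $\mathsf{P}_d[\mathrm{SRW}_{2n}=0]$ is, for every fixed $n\geq 1$, strictly decreasing in $d$.

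**Step 4: From convergence of $p$ to NVM-distinguishability.** Finally I must upgrade "$p(\mathbb{Z}^{d_1}) \neq p(\mathbb{Z}^{d_2})$" to the formal distinguishability definition: take $S_t = S^{(2)}_t$, and $\mathcal{S}_i(t)$ a small interval around $p(\mathbb{Z}^{d_i})$; I need $S^{(2)}_t \to p(\mathbb{Z}^{d_i})$ in probability. On the infinite lattice this needs a word about initial conditions — the NVM started from i.i.d.\ $\mathrm{Bern}(1/2)$ is already stationary in a suitable sense (the product measure is not stationary, but it converges to the stationary measure, and actually for the one-site marginal and pair correlations at a fixed lag the convergence is exponentially fast uniformly), and then a variance estimate $\mathrm{Var}(S^{(2)}_t) = o(1)$ via mixing of the time series $(I(X_i(u)=X_{i+2}(u)))_i$, which inherits exponential decay of correlations from the $(1-\varepsilon)$ contraction factor. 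This part is routine given the earlier theorems' machinery, so I would cite or mirror the asymptotic-normality argument promised in the abstract. The real content is Steps 2–3.
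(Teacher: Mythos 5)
Your overall skeleton matches the paper's: identify $\lim_t{\sf P}(X_t=X_{t+2})=\frac12(1+p_2)$, show this limit is strictly monotone in $d$, and conclude by a law of large numbers for $S^{(2)}_t$ (your Step 4 is exactly the paper's Theorem~\ref{th:NVM_main} and Lemma~\ref{lm:lll}, whose proof works verbatim on infinite graphs because the key estimate ${\sf P}(|\pi_{t+s+d}|\geq s)=(1-\varepsilon)^s$ is graph-independent, so your worries about stationarity are easily dispatched). Your correlation-function formulation in Steps 1--2 is just the dual (coalescing-genealogy) description in different clothing: $\mathbb{E}[Z_sZ_t]$ equals the probability that the two opinion-ancestry walks meet, which is the paper's $p_2$, expressed there as $\frac{(1-\varepsilon)^2}{2d}$ plus a weighted sum of meeting probabilities $\mu(d,\mathbf{r})$ of two independent $\varepsilon$-terminable walks started at $0$ and at displacements $\mathbf{r}$ with $|\mathbf{r}|_1\in\{0,2\}$.

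The genuine gap is your Step 3, which is where all the work lies. Neither of your two suggested routes closes it. First, ``asymptotic monotonicity in $1/d$ plus checking the finitely many small $d$ directly from the formula'' fails because there is no closed formula: the quantities involved are lattice Green's functions (Watson-type integrals), and each small-$d$ comparison must hold for \emph{every} $\varepsilon\in(0,1)$, so no finite numerical check suffices. Second, the ``term-by-term comparison of the return-probability series $\sum_n(1-\varepsilon)^{2n}{\sf P}_d[\mathrm{SRW}_{2n}=0]$'' compares the wrong object: what must be shown monotone is a weighted sum of \emph{meeting} probabilities of two walks started at nonzero displacements, where both the set of displacements ($|\mathcal{R}_1|,|\mathcal{R}_2|$ grow with $d$) and the weights ($(2d)^{-2}$) change with $d$, so even granting monotonicity of each individual hitting probability one still has to beat the growth of the index set. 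The paper resolves exactly this via a dimension-reduction coupling (Lemma~\ref{lm:lattices}): a measure-decreasing surjection $\varphi$ from trajectories of the difference walk in $\mathbb{Z}^{d+1}$ onto trajectories in $\mathbb{Z}^{d}$, obtained by interleaving and merging the first and last coordinates, which yields $\mu(d,\mathbf{r})\geq\mu(d+1,(\mathbf{r},0))$; this is then combined with an explicit count showing $\sum_{\mathcal{R}_1(d+1)}\mu\leq 2\sum_{\mathcal{R}_1(d)}\mu$ and $\sum_{\mathcal{R}_2(d+1)}\mu\leq 3\sum_{\mathcal{R}_2(d)}\mu$, so that the extra factor in the denominator wins. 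You correctly sensed that ``a coupling'' is the cleanest route, but you did not construct it, and constructing it is the content of the theorem.
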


The proofs are given in Sections~\ref{sc:cycles}~and~\ref{sc:lattices} respectively. We also distinguish perfect trees with equal arities but different heights as well as perfect trees with different arities but equal heights (other than stars). In what follows, we use the standard notion of a {\it rooted tree}, which is a tree $T$ with a distinguished vertex $v$ which is called a {\it root}, and the related standard terminology (children, leaves, etc). Let us recall that a {\it perfect $k$-ary tree} $(T,v)$ of height $h$ is a tree $T$ with root $v$ such that every non-leaf vertex has exactly $k$ children, and every leaf is at distance exactly $h$ from $v$.

\begin{theorem}
\begin{enumerate}
\item Let $k_1>k_2\geq 2$, $h\geq 2$ be integers, $(T_1,v_1)$, $(T_2,v_2)$ be perfect $k_1$-ary and $k_2$-ary trees respectively of height $h$. Then they are distinguishable by $S^{(2)}$.
\item Let $h_1> h_2\geq 1$, $k\geq 2$ be integers, $(T_1,v_1)$, $(T_2,v_2)$ be perfect $k$-ary trees of heights $h_1$ and $h_2$ respectively. Then they are distinguishable by $S^{(2)}$.
\end{enumerate}
\label{th:trees}
\end{theorem}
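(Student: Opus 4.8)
The strategy, as in the other theorems, is to compute (or estimate precisely enough) the limit $s^{(2)}(G,u) := \lim_{t\to\infty} S^{(2)}_t$ for a perfect $k$-ary tree and show that it takes distinct values for the pairs in question. Since $S^{(2)}_t \to \mathsf{P}[X_\infty(u) = X_\infty'(u)]$ where $X_\infty, X_\infty'$ are two states of the stationary chain two steps apart, and since $\mathsf{P}[X = Y] = \tfrac12(1 + \mathbb{E}[(2X-1)(2Y-1)])$ for Bernoulli-$\tfrac12$-marginal variables, everything reduces to computing the stationary two-step autocorrelation $\rho_2(u) := \mathbb{E}[(2X_t(u)-1)(2X_{t+2}(u)-1)]$ at the root. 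So the first step is to derive a clean recursion for correlations. Write $Z_t(w) = 2X_t(w)-1 \in \{\pm1\}$; the update rule gives $\mathbb{E}[Z_t(w)\mid \mathcal{F}_{t-1}] = \tfrac{1-\varepsilon}{\deg(w)}\sum_{v\sim w} Z_{t-1}(v)$, i.e. $Z_t = (1-\varepsilon) P Z_{t-1} + (\text{noise})$ where $P$ is the random-walk transition operator. Hence the stationary correlation matrix $R = \mathbb{E}[Z_t Z_t^{\mathsf T}]$ satisfies a discrete Lyapunov equation, and $\rho_2(u) = \big((1-\varepsilon)^2 P^2 R\big)_{uu} = (1-\varepsilon)^2 \sum_{v} (P^2)_{uv} R_{vu}$. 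Since $P^2$ from the root of a perfect tree only reaches the root and its grandchildren (distance-$2$ sphere), we need $R_{uu}=1$ and $R_{uv}$ for $v$ at distance $2$ from $u$.

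**Exploiting symmetry.** The perfect $k$-ary tree has a large automorphism group fixing the root, so $R_{uv}$ depends only on the "shape" of the pair $(u,v)$ — essentially on the depths of $u$, of $v$, and of their common ancestor. In particular, for the root $u = v_1$ and a grandchild $v$, $R_{v_1 v}$ is a single number determined by $h$ and $k$. The plan is to set up the linear system for the finitely many correlation values $R_{a,b,c}$ (indexed by depth-of-first-vertex, depth-of-second, depth-of-least-common-ancestor) coming from the Lyapunov equation $R = (1-\varepsilon)^2 P R P^{\mathsf T} + \varepsilon' I$ for the appropriate constant $\varepsilon'$ — or, more practically, to track just the few correlations reachable from the root in two steps by conditioning on where the "walk backwards" goes (to the root's children, then to the root's children or grandchildren). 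This gives a small closed linear system whose solution yields $\rho_2(v_1)$ as an explicit rational function of $k$, $h$, and $\varepsilon$.

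**Monotonicity in $k$ and in $h$.** With $\rho_2(v_1) = \rho_2(k,h,\varepsilon)$ in hand, part (1) needs $\rho_2(k_1,h,\varepsilon) \neq \rho_2(k_2,h,\varepsilon)$ for $k_1 > k_2 \geq 2$, and part (2) needs $\rho_2(k,h_1,\varepsilon) \neq \rho_2(k,h_2,\varepsilon)$ for $h_1 > h_2 \geq 1$. The cleanest route is to show strict monotonicity: increasing $k$ makes the root's grandchildren "weigh less" relative to the degree-$k$ children (the return-to-root probability of the two-step walk is $1/k$ at the root's children and $1/(k+1)$ at deeper internal vertices), which should push $\rho_2$ monotonically; increasing $h$ appends another layer of mixing that strictly changes the deep-vertex correlations, hence strictly changes $\rho_2$ through the boundary effect. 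I would prove these by exhibiting the dependence explicitly or by a coupling/comparison argument: the tree of height $h+1$ dominates the tree of height $h$ in the sense that extra leaves only add positively-correlated mass. Either way, once strict monotonicity (or mere injectivity in the relevant parameter) is established, distinguishability by $S^{(2)}$ follows from the law of large numbers for $S^{(2)}_t$ exactly as in the proofs of Theorems~\ref{thm:cliques}--\ref{thm:lattices}, by taking $\mathcal{S}_i(t)$ to be shrinking neighborhoods of the two distinct limits.

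**Main obstacle.** The delicate point is that, unlike the vertex-transitive cases (cliques, cycles, $\mathbb{Z}^d$), a perfect tree is not homogeneous: correlations depend strongly on depth and on proximity to the leaf boundary, so the "small linear system" is only small after one carefully uses the automorphism group to collapse equivalent pairs, and its size (number of distinct pair-types reachable in the Lyapunov iteration) grows with $h$. The real work is to show that, despite this, the quantity $\rho_2(v_1)$ still admits a tractable closed form — or at least a provably monotone dependence on $k$ and $h$ — rather than an opaque expression from which injectivity cannot be read off. I expect the cleanest argument to decouple the problem: express $R_{v_1,v}$ for grandchildren $v$ via a one-dimensional recursion in depth for the "descending" correlation $r_j := R_{w, w'}$ where $w$ is at depth $j$ and $w'$ a child of $w$, solve that recursion (a linear fractional iteration whose fixed point and rate depend only on $k$), and then read off monotonicity in $h$ from the transient and monotonicity in $k$ from the fixed point and the coefficients; handling the $h_2 = 1$ edge case (where the height-$1$ tree is a star and must be excluded only when $k$ would coincide) separately.
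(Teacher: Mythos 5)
Your overall framework is the right one and matches the paper's at the top level: both reduce distinguishability to showing that the limiting two-step quantity at the root (your $\rho_2$, the paper's $p_2$, which coincide since $\mathsf{P}(X_t=X_{t+2})=\frac12(1+p_2)$) is injective in $k$ for fixed $h$ and in $h$ for fixed $k$, and both write this quantity as $(1-\varepsilon)^2\bigl[\frac{1}{k+1}+\frac{k}{k+1}f\bigr]$ where $f$ is a root--grandchild meeting/correlation term. But the proposal has a genuine gap: the two monotonicity statements are the entire content of the theorem, and you do not prove either of them --- you state that you ``would prove these by exhibiting the dependence explicitly or by a coupling/comparison argument'' and that you ``expect'' a tractable closed form. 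The Lyapunov-equation route you sketch does not obviously deliver this: the number of orbit types of pairs under the root-fixing automorphism group grows with $h$, so the resulting rational function of $(k,h,\varepsilon)$ has no uniform closed form from which injectivity in $h$ can be read off, which is exactly the obstacle you yourself flag. Moreover, your heuristic for the $h$-dependence (``extra leaves only add positively-correlated mass'', suggesting the quantity increases with $h$) points in the wrong direction: the meeting probability of the two coalescing ancestral walks strictly \emph{decreases} with $h$, because added depth gives the walks more room to separate.

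The paper supplies precisely the two missing arguments, and both are probabilistic couplings rather than linear algebra. For monotonicity in $k$, it proves by induction on $t$ that $P_t(T;u,w)>P_t(T';\varphi(u),\varphi(w))$ for a rooted-tree monomorphism $\varphi$ from the $k$-ary tree $T$ into the $(k+1)$-ary tree $T'$, splitting into cases according to whether $u,w$ are leaves and redistributing the probability deficit $(\frac{k}{k+1})^2-(\frac{k-1}{k})^2$ using an auxiliary claim that the meeting probability is monotone along the path joining the two walkers. For monotonicity in $h$, it couples the pair of walks on the height-$(h+1)$ tree with a ``truncated'' pair on the height-$h$ tree that always stays on the geodesic between the original walkers, so that the truncated pair meets whenever the original pair does, and then identifies the truncated pair's meeting probability with $P(T;u,v)$ by symmetry. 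If you want to salvage your route, you would need to replace your domination heuristic with an argument of this type (or actually solve your depth recursion and verify the sign of the transient); as written, the proof is incomplete at its central step.
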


The proof is given in Section~\ref{sc:trees}. However, stars rooted at their centers are not distinguishable (see Theorem~\ref{th:bip} in Section~\ref{sc:bip}). We conjecture that stars are the only finite graphs that are not distinguishable. As evidence for the conjecture, we prove that two random graphs on $[n]:=\{1,\ldots,n\}$ chosen uniformly at random and independently are distinguishable with high probability (whp) for almost all $\varepsilon$.

\begin{theorem}
Let $G_n^1,G_n^2$ be two independent and uniformly distributed random graphs on $[n]$. Then whp for every $u_1,u_2\in[n]$, $S^{(2)}$ distinguishes between $(G_n^1,u_1)$ and $(G_n^2,u_2)$ for almost all $\varepsilon$.
\label{th:r_g}
\end{theorem}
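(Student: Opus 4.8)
The plan is to reduce the theorem to an anti‑concentration estimate for a short‑range graph statistic, and then finish with a union bound over pairs of roots. Since whp each $G^j_n$ is connected and non‑bipartite, the NVM on it is an irreducible aperiodic finite Markov chain, so by the ergodic theorem $S^{(2)}_t$ converges almost surely, hence in probability, to the deterministic limit $p(G,u,\varepsilon):=\mathsf{P}_\pi[X_0(u)=X_2(u)]$, where $\pi$ is the stationary law. To evaluate $p$ I would use the dual description of the NVM: to determine $X_t(u)$ one traces a backward ``lineage'' which at each step moves to a uniformly random neighbour with probability $1-\varepsilon$ and otherwise stops at a fresh fair coin. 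Running the lineages of $X_0(u)$ and $X_2(u)$ as coalescing‑with‑killing walks, they agree with probability $1$ if they coalesce and with probability $\tfrac12$ otherwise (non‑coalescing lineages stop at distinct space–time points, giving independent coins), so $p(G,u,\varepsilon)=\tfrac12+\tfrac12\,q(G,u,\varepsilon)$ with $q(G,u,\varepsilon)$ the coalescence probability. A first‑step analysis shows $q(G,u,\cdot)$ is a rational function of $(1-\varepsilon)^2$: it is obtained by applying the resolvent $(I-(1-\varepsilon)^2Q)^{-1}$ — where $Q$ is the sub‑stochastic transition matrix of the pair of killed walks on $V(G)\times V(G)$ off the diagonal — to explicit vectors built from the walk kernels near $u$. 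Two rational functions either coincide or agree at finitely many points, so $(G^1_n,u_1)$ and $(G^2_n,u_2)$ are distinguishable by $S^{(2)}$ for all but finitely many (hence almost all) $\varepsilon$ iff $q(G^1_n,u_1,\cdot)\not\equiv q(G^2_n,u_2,\cdot)$; when this holds one distinguishes via two fixed disjoint intervals around the two distinct limits, exactly the reduction used for the previous theorems.

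Thus it suffices to prove that whp $q(G^1_n,u_1,\cdot)\not\equiv q(G^2_n,u_2,\cdot)$ for all $u_1,u_2\in[n]$. I would extract finite features from $q$ by expanding at $\varepsilon=1$. The leading term is
$$
q(G,u,\varepsilon)=(1-\varepsilon)^2\,\alpha(G,u)+O\bigl((1-\varepsilon)^4\bigr),\qquad \alpha(G,u)=\frac1{\deg u}\sum_{w\in N_G(u)}\frac1{\deg w}=p_2(u,u),
$$
the two‑step return probability of simple random walk; more generally the coefficient of $(1-\varepsilon)^{2k}$ is an explicit polynomial in the transition probabilities of short walks issued from and near $u$. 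If the two rational functions coincide, all these coefficients agree, in particular $\alpha(G^1_n,u_1)=\alpha(G^2_n,u_2)$. So the theorem follows once one establishes
$$
\mathsf{P}\bigl[\alpha(G^1_n,u_1)=\alpha(G^2_n,u_2)\bigr]=o(n^{-2})\qquad\text{for each fixed }u_1,u_2,
$$
and applies the union bound over the $n^2$ pairs; should $\alpha$ alone fail to separate enough pairs, one uses instead the vector $\bigl([(1-\varepsilon)^{2k}]q\bigr)_{1\le k\le m}$ of the first few coefficients.

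To bound $\mathsf{P}[\alpha(G^1_n,u_1)=\alpha(G^2_n,u_2)]$ I would reveal all of $G^2_n$ — so $c:=\alpha(G^2_n,u_2)$ is a fixed rational — and all of $G^1_n$ except the edges at $u_1$. With $\xi_w$ the indicator of $\{w\sim u_1\}$ (i.i.d.\ fair coins) and $f_w=1/(\deg_{G^1_n-u_1}(w)+1)$ (now fixed), $\alpha(G^1_n,u_1)=\bigl(\sum_w\xi_w f_w\bigr)\big/\bigl(\sum_w\xi_w\bigr)$, so the event becomes $\sum_w\xi_w(f_w-c)=0$: a Littlewood--Offord type anti‑concentration question for the $0/1$‑weighted sum $\sum_w\xi_w(f_w-c)$.

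The main obstacle is precisely this anti‑concentration bound. The elementary Littlewood--Offord inequality only yields $O(n^{-1/2})$, because the $f_w$ take merely $\Theta(\sqrt{n\log n})$ distinct values — the degree sequence of a dense random graph occupies a window of that width — so this by itself does not beat the $n^2$ union bound. Getting below $n^{-2}$ should require either (i) using several coefficients $[(1-\varepsilon)^{2k}]q$ at once, each depending on the edges at $u_1$ in a higher‑degree (quadratic or multilinear) fashion, so that a higher‑order anti‑concentration inequality in the spirit of Costello--Tao--Vu produces a much smaller concentration function; or (ii) exploiting the randomness of the degree multiplicities of $G^1_n-u_1$, which are themselves spread out, alongside the randomness of the edges at $u_1$. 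I expect the delicate part of the argument to be selecting the right finite‑dimensional feature of $q(G,u,\cdot)$ together with a matching anti‑concentration tool strong enough to absorb the union bound; the dual‑process identity and the reduction to rational functions are routine given the earlier sections.
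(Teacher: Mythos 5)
Your reduction matches the paper's up to the point where the real work begins: the paper also observes that $p_2=\sum_{j\ge 2}a_j(G,u)\beta^j$ with $\beta=1-\varepsilon$ is analytic, that two such functions either coincide or differ for almost every $\varepsilon$, and that the leading coefficient is exactly $a_2(G,u)=\frac{1}{\deg u}\sum_{v\in N(u)}\frac{1}{\deg v}$, so that it suffices to show that whp $a_2(G_n^1,u_1)\neq a_2(G_n^2,u_2)$ for all $u_1,u_2$. But the step you yourself flag as ``the main obstacle'' is a genuine gap, not a routine detail. You need the point probability $\mathsf{P}[\alpha(G_n^1,u_1)=\alpha(G_n^2,u_2)]=o(n^{-2})$ to survive the union bound, you correctly note that Littlewood--Offord only gives $O(n^{-1/2})$, and your two proposed fixes (a Costello--Tao--Vu-type quadratic anti-concentration for higher coefficients of $q$, or exploiting spread-out degree multiplicities) are stated as hopes rather than arguments. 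Neither is carried out, and it is far from clear that either would deliver $o(n^{-2})$: the weights $f_w-c$ live on a lattice-like set of $\Theta(\sqrt{n\log n})$ values of spacing roughly $n^{-2}$, which is exactly the regime where generic anti-concentration bounds stall. As written, the proof does not close.

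The paper's resolution is of a completely different, arithmetic nature, and avoids anti-concentration at $0$ altogether. Writing $a_2(G,u)$ as a rational number, one finds (whp, simultaneously for all roots) a prime $p=\Theta(\sqrt{n\ln n})$ such that exactly one degree value $d^*$ occurring in $N(u_1)$ is divisible by $p$, no other degree in $N(u_2)$ is divisible by $p$, and $\mu_1 d_2-\mu_2 d_1\not\equiv 0\pmod p$, where $\mu_j$ counts neighbours of $u_j$ of degree $d^*$ and $d_j=\deg u_j$. Equality of the two sums of inverse degrees then forces $p\mid \mu_1 d_2-\mu_2 d_1$, a contradiction. The existence of such a prime is established via the McKay--Wormald approximation of the degree sequence by independent binomials plus the prime number theorem, and crucially the statement is proved \emph{for every vertex simultaneously} (Lemma~\ref{cl:degrees_existence}), so no $o(n^{-2})$ point-probability estimate is ever needed. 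If you want to complete your write-up, you should either import this divisibility argument or supply a genuinely new anti-concentration input; the latter is not available off the shelf.
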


The proof is given in Section~\ref{sc:random}.\\

In the next section, we study asymptotic properties of $S^{(d)}$ and explain why this statistic is so useful to distinguish graphs. In Sections~\ref{sc:random}--\ref{sc:bip} we distinguish between graphs from certain families using the properties of $S^{(d)}$ described in Section~\ref{sc:rep}. In particular,~we prove Theorems~\ref{thm:cliques}--\ref{th:r_g}. In Section~\ref{sc:last} we discuss certain interesting future directions.

\section{Repetitions}
\label{sc:rep}

The reason why $S^{(d)}_t$ is useful is that it converges in probability (see Section~\ref{sc:main_proof}) to a value that is uniquely determined by the limit probability (as $t\to\infty$) that $X_t(u)$ and $X_{t+d}(u)$ both receive an opinion of a vertex $v$, that shared its opinion at some moment $s\leq t$. Then, if these limits are different on $(G_1,u_1)$ and $(G_2,u_2)$, then we immediately get that $S^{(d)}$ distinguishes between $(G_1,u_1)$ and $(G_2,u_2)$.

More formally, let $\mathcal{D}_G$ be a directed graph on $V(G)\times\mathbb{Z}_+$ defined as follows: $((u,t),(v,t-1))\in E(\mathcal{D}_G)$, if $\xi_t(u)=0$ and $\eta_t(u)=v$. There are no other edges in $\mathcal{D}_G$. Fix a positive integer $d$. Let $\pi_t:=\pi_t(G,u)$ be the longest path in $\mathcal{D}_G$ that starts on $(u,t)$. Let $p_{t;d}:=p_{t;d}(G,u)$ be the probability that $\pi_t$ meets $\pi_{t+d}$. In other words, $p_{t;d}$ is the probability that there exists a vertex $v$ of $G$ such that $u$ receives its opinion $X_s(v)$, $s\leq t$, at time moments $t$ and $t+d$. Note that $p_{t;d}$ is not decreasing in $t$, and so there exists 
$$
p_{d}:=p_{d}(G,u)=\lim_{t\to\infty}p_{t;d}(G,u).
$$ 

\begin{theorem}
If $(G_1,u_1)$, $(G_2,u_2)$ are rooted graphs such that, for some $d\in\mathbb{N}$,
$$
p_d(G_1,u_1)\neq p_d(G_2,u_2),
$$
then $(G_1,u_1)$ and $(G_2,u_2)$ are distinguishable by $S^{(d)}$.
\label{th:NVM_main}
\end{theorem}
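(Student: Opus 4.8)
The plan is to show that $S^{(d)}_t$ converges in probability to an explicit function of the limiting repetition probability, which in turn is determined by $p_d(G,u)$; once that is established, choosing disjoint target sets around the two distinct limits immediately witnesses distinguishability in the sense of the definition. The first step is to compute $q_{t;d}(G,u) := {\sf P}[X_t(u) = X_{t+d}(u)]$ and identify its limit $q_d$ as $t\to\infty$. The key observation is to condition on the event that $\pi_t$ and $\pi_{t+d}$ meet (probability $p_{t;d}$): on this event the two observations are copies of the same noisy opinion $X_s(v)$ transported along the two paths, so they are \emph{equal with probability $1$}; on the complementary event, $\pi_t$ and $\pi_{t+d}$ terminate at two distinct ``fresh'' opinions (each path ends either at time $0$ or at a vertex that received a random $Y$), and since all the $X_0$'s and all the $Y_s(w)$'s are independent $\mathrm{Bern}(1/2)$ variables, conditionally on the path structure the two endpoint opinions are independent fair coins, hence equal with probability exactly $1/2$. (One should be a little careful that the two paths, even when they do not meet, might both pass through the same random source; but if $\pi_t$ and $\pi_{t+d}$ do not meet as paths in $\mathcal{D}_G$, then by definition they share no vertex $(w,s)$, so the randomness refreshing them is disjoint — this is exactly why ``meeting'' is the right event.) Therefore
$$
q_{t;d}(G,u) = p_{t;d}(G,u) + \tfrac12\bigl(1 - p_{t;d}(G,u)\bigr) = \tfrac12 + \tfrac12\, p_{t;d}(G,u) \xrightarrow[t\to\infty]{} \tfrac12 + \tfrac12\, p_d(G,u) =: q_d(G,u),
$$
which is a strictly increasing function of $p_d$; in particular $p_d(G_1,u_1) \neq p_d(G_2,u_2)$ forces $q_d(G_1,u_1) \neq q_d(G_2,u_2)$.

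The second step is to upgrade the convergence of the one-term expectations to convergence in probability of the average $S^{(d)}_t = \frac{1}{t-d}\sum_{i=1}^{t-d} I(X_i(u) = X_{i+d}(u))$. By the Cesàro-type averaging, ${\sf E}[S^{(d)}_t] = \frac{1}{t-d}\sum_{i=1}^{t-d} q_{i;d}(G,u) \to q_d(G,u)$ since $q_{i;d}\to q_d$. For the variance, it suffices to show $\mathrm{Var}(S^{(d)}_t) \to 0$. Writing the variance as a double sum of covariances $\mathrm{Cov}\bigl(I(X_i(u)=X_{i+d}(u)),\, I(X_j(u)=X_{j+d}(u))\bigr)$, I would argue that these covariances decay in $|i-j|$: the event $\{X_i(u) = X_{i+d}(u)\}$ is, up to the negligible chance that $\pi_i$ does not terminate quickly, measurable with respect to the opinions in a bounded ``time window'' below level $i$, because the random-refresh probability $\varepsilon > 0$ makes each path geometrically short — ${\sf P}[\,|\pi_t| > L\,] \leq (1-\varepsilon)^L$. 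Hence if $|i-j|$ is large, the two indicator events depend (with high probability) on disjoint sets of independent source variables and are therefore nearly independent, making the covariance exponentially small in $|i-j|$. Summing, the double sum is $O(t)$, so $\mathrm{Var}(S^{(d)}_t) = O(1/t) \to 0$, and Chebyshev gives $S^{(d)}_t \to q_d(G,u)$ in probability.

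The final step assembles the statement. Let $q_1 := q_d(G_1,u_1)$ and $q_2 := q_d(G_2,u_2)$ with $q_1 \neq q_2$, say $q_1 < q_2$, and set $m := (q_1+q_2)/2$. Take $\mathcal{S}_1(t) := (-\infty, m)$ and $\mathcal{S}_2(t) := [m, \infty)$, which are disjoint, and let $S_t := S^{(d)}_t$ (as a function of $X_1,\dots,X_t$, with the convention $S_t := 1$ for $t \le d$). By the convergence in probability just proved, ${\sf P}[S^{(d)}_t(G_1,u_1) < m] \to 1$ and ${\sf P}[S^{(d)}_t(G_2,u_2) \geq m] \to 1$; since the two processes are independent, the probability of the intersection tends to $1$, which is exactly the definition of distinguishability by $S^{(d)}$.

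The main obstacle I anticipate is the variance bound in the second step — specifically, making rigorous the claim that $\{X_i(u)=X_{i+d}(u)\}$ is well-approximated by an event in a bounded time window and that two such windowed events at distant times $i,j$ are genuinely independent. One has to handle the low-probability tail where a path $\pi_i$ is long (and could in principle reach down near level $j$), and one has to verify that truncating the paths at depth $L = L(|i-j|)$ changes each indicator by a probability at most $(1-\varepsilon)^L$, while on the truncated event the two indicators are functions of disjoint, independent collections of $\{X_0(w)\}$ and $\{Y_s(w)\}$ variables (here the choice random variables $\xi_t(u), \eta_t(u)$ that build $\mathcal{D}_G$ are shared, but conditioning on $\mathcal{D}_G$ and then using independence of the opinion variables resolves this). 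A clean way to organize this is to condition on the entire graph $\mathcal{D}_G$ first, bound everything conditionally, and then take expectations.
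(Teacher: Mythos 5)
Your proposal is correct and follows essentially the same route as the paper: condition on whether $\pi_t$ and $\pi_{t+d}$ meet to get ${\sf P}(X_t=X_{t+d})=p_{t;d}+\frac12(1-p_{t;d})\to\frac12(1+p_d)$, then bound the covariances of the repetition indicators using the geometric tail ${\sf P}(|\pi_t|\geq s)\leq(1-\varepsilon)^s$ to get $\mathrm{Var}\,S^{(d)}_t=O(1/t)$ and conclude by Chebyshev. The independence issue you flag at the end is resolved exactly as you suggest (on the event that the later path is short, the two indicator events are determined by disjoint collections of the time-indexed $\xi,\eta,Y,X_0$ variables), which is what the paper does in its covariance decomposition.
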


In Section~\ref{sc:main_proof}, we prove this theorem. After that, in Section~\ref{sc:normal}, we show that $S^{(d)}_t$ (or, in some cases, when the behavior of $\mathrm{Var}S^{(d)}_t$ is atypical, its slight modification) is asymptotically normal.\\

Theorem~\ref{th:NVM_main} is used to prove Theorems~\ref{thm:cliques}--\ref{th:r_g}. However, as we show in Section~\ref{sc:bip}, this tool is not always sufficient to distinguish. In particular, let $G_1=K_{m_1,n_1}$ and $G_2=K_{m_2,n_2}$ be complete bipartite graphs with partitions $V(G_1)=V_1^1\sqcup V_1^2$ and $V(G_2)=V_2^1\sqcup V_2^2$ respectively, $|V_1^1|=n_1$, $|V_1^2|=m_1$, $|V_2^1|=n_2$, $|V_2^2|=m_2$, $u_1\in V_1^1$, $u_2\in V_2^1$. Then there exist arbitrarily large integers $m_1,n_1,m_2,n_2$ and $\varepsilon\in(0,1)$ (note that $\varepsilon$ can be chosen arbitrarily close to 0) such that $(G_1,u_1)$ and $(G_2,u_2)$ are NVM-distinguishable, but for every $d\in\mathbb{N}$, $p_d(G_1,u_1)=p_d(G_2,u_2)$. In order to overcome this, we introduce auxiliary statistics in Section~\ref{sc:other}.\\

\subsection{Proof of Theorem~\ref{th:NVM_main}}
\label{sc:main_proof}


Let $(X_t=X_t(G,u),\,t\in\mathbb{Z}_+)$ be the sequence of observations of NVM on $G$ at $u$. 
Let us find $\lim\limits_{t\to\infty}{\sf P}(X_t=X_{t+d})$: 
\begin{align}
{\sf P}(X_t=X_{t+d})\,\,
&=\,\,{\sf P}(X_t=X_{t+d}|\pi_t\cap\pi_{t+d}\neq\varnothing){\sf P}(\pi_t\cap\pi_{t+d}\neq\varnothing)\notag\\
&\quad\quad\quad\quad\quad\quad\quad\quad+{\sf P}(X_t=X_{t+d}|\pi_t\cap\pi_{t+d}=\varnothing){\sf P}(\pi_t\cap\pi_{t+d}=\varnothing)\label{eq:prob_duplicate}\\
&=\,\, p_{t;d}+\frac{1}{2}(1-p_{t;d}).\notag
\end{align}
Therefore,
\begin{equation}
 \lim_{t\to\infty}{\sf P}(X_t=X_{t+d})=p_d+\frac{1}{2}(1-p_d)=\frac{1}{2}(1+p_d).
\label{eq:limit_expectation}
\end{equation}
The following lemma finishes the proof of Theorem~\ref{th:NVM_main}.\\

\begin{lemma}
For every $d\in\mathbb{N}$,
\begin{equation}
 S^{(d)}_t\stackrel{{\sf P}}\to \frac{1}{2}(1+p_d),\quad t\to\infty.
\label{eq:lll}
\end{equation}
\label{lm:lll}
\end{lemma}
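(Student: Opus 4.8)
The plan is to prove $L^2$-convergence (hence convergence in probability) by controlling $\mathbb{E} S^{(d)}_t$ and $\mathrm{Var}\, S^{(d)}_t$ separately. For the mean, write $\mathbb{E} S^{(d)}_t = \frac{1}{t-d}\sum_{i=1}^{t-d}{\sf P}(X_i=X_{i+d})$. By \refeq{prob_duplicate} each summand equals $p_{i;d}+\frac12(1-p_{i;d})=\frac12(1+p_{i;d})$, and since $p_{i;d}\nearrow p_d$ as $i\to\infty$ (monotonicity is stated in the excerpt), a Cesàro/Toeplitz argument gives $\mathbb{E} S^{(d)}_t\to\frac12(1+p_d)$. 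So it suffices to show $\mathrm{Var}\, S^{(d)}_t\to 0$.

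For the variance I would expand
\[
\mathrm{Var}\, S^{(d)}_t=\frac{1}{(t-d)^2}\sum_{i,j=1}^{t-d}\mathrm{Cov}\bigl(I(X_i=X_{i+d}),\,I(X_j=X_{j+d})\bigr),
\]
and show that the covariance terms decay in $|i-j|$ fast enough that the double sum is $o((t-d)^2)$ — in fact I expect it is $O(t-d)$, which would even give a rate. The key structural fact is that, conditioned on the "backward opinion-propagation paths" $\pi_i,\pi_{i+d},\pi_j,\pi_{j+d}$ (the longest paths in $\mathcal D_G$ started at $(u,i),(u,i+d),(u,j),(u,j+d)$), the indicator $I(X_i=X_{i+d})$ is determined by whether $\pi_i$ and $\pi_{i+d}$ meet and, if not, by an independent fair coin (this is exactly the computation behind \refeq{prob_duplicate}). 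If $i+d<j$ and the four paths are pairwise "well separated" — more precisely, if the pair $(\pi_i,\pi_{i+d})$ does not intersect the pair $(\pi_j,\pi_{j+d})$ — then the two indicators are conditionally independent, so their covariance vanishes; and because each $\xi$-edge is present with probability $1-\varepsilon<1$ independently, the path started at $(u,i+d)$ survives down to "level" $j$ only with probability $(1-\varepsilon)^{j-i-d}$, which is exponentially small in $j-i$. Summing the trivial bound $|\mathrm{Cov}|\le 1$ over the $O(t-d)$ pairs where the paths can interact, plus the exponentially decaying contribution from the rest, bounds the double sum by $O(t-d)$, hence $\mathrm{Var}\, S^{(d)}_t = O(1/(t-d))\to 0$.

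The main obstacle is making the "conditional independence when the paths don't interact" step fully rigorous: one must argue that the event $I(X_i=X_{i+d})$ depends only on the randomness along $\pi_i\cup\pi_{i+d}$ (the opinions $Y$, the coins $\xi$, the choices $\eta$ at those space-time vertices), and that disjoint path-clusters use disjoint, independent collections of these variables. The slightly delicate point is that the paths are themselves random, so one cannot literally condition on a fixed set of variables; the clean way is to reveal the paths sequentially (exposing $\xi_s(\cdot),\eta_s(\cdot)$ only along the paths as they are built from level $i$ downward) and observe that on the event "$\pi_j$-cluster disjoint from $\pi_i$-cluster" the remaining fair coins / opinions feeding the two clusters are independent. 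A second, minor point is handling bipartite $G$, where $p_{i;d}$ and the coin-flip structure can depend on the parity of $d$ and of $i$; but the Cesàro and variance arguments are insensitive to this, since we only ever need $p_{i;d}\to p_d$ along \emph{all} $i$, which still holds by the stated monotonicity. Everything else — the Toeplitz lemma, the union bound, the geometric series — is routine.
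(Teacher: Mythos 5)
Your proposal is correct and follows essentially the same route as the paper: Ces\`aro convergence of ${\sf E}S^{(d)}_t$ to $\tfrac12(1+p_d)$ via (\ref{eq:limit_expectation}), Chebyshev's inequality, and exponential decay of $\mathrm{cov}(\zeta_i,\zeta_j)$ in $|i-j|$ coming from the geometric termination of the backward opinion paths. The one refinement worth taking from the paper is to truncate on the event $\{|\pi_j|<j-i-d\}$, which is measurable with respect to the window randomness alone (and on which $X_j$ is a fresh coin independent of everything up to time $i+d$), rather than on non-intersection of the two path clusters; this makes the conditional-independence step you rightly flag as delicate entirely routine, while the complementary event still has probability $(1-\varepsilon)^{j-i-d}$.
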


\begin{proof} For every $t$, consider $\zeta_t=I(X_t=X_{t+d})$. From (\ref{eq:limit_expectation}), ${\sf E}\zeta_t\to\frac{1}{2}(1+p_d)$ as $t\to\infty$. Therefore, ${\sf E}S_t=\frac{1}{t-d}\sum_{s=1}^{t-d}{\sf E}\zeta_s\to\frac{1}{2}(1+p_d)$ as $t\to\infty$ as well. By Chebyshev's inequality, for every $\delta>0$ and $t$ large enough,
\begin{equation}
 {\sf P}\left(\left|S_t-\frac{1}{2}(1+p_d)\right|>\delta\right)\leq
 {\sf P}\left(|S_t-{\sf E}S_t|>\frac{1}{2}\delta\right)\leq\frac{4\mathrm{Var}S_t}{\delta^2}. 
\label{eq:Cheb}
\end{equation}
It remains to prove that $\mathrm{Var}S_t\to 0$ as $t\to\infty$. 

Let $\beta>0$. Let us choose $s_0$ such that, for all $s\geq s_0$,
$$
(1-\varepsilon)^s<\frac{\delta^2\beta}{12}\quad\text{ and }\quad{\sf E}\zeta_{s-1}\leq{\sf E}\zeta_{t+s+d}+\frac{\delta^2\beta}{12{\sf E}\zeta_t}\text{ for all }t
$$
(note that ${\sf E}\zeta_t\geq\frac{1}{2}>0$ for all $t$). Let us denote by $|\pi_t|$ the length of $\pi_t$. Then, clearly, for every $t$ and $s\geq s_0$,
\begin{align}
 {\sf E}\zeta_t\zeta_{t+s+d} \,\, & = \,\, {\sf P}(X_t=X_{t+d},X_{t+s+d}=X_{t+s+2d})\notag\\
 & = \,\,{\sf P}(X_t=X_{t+d},X_{t+s+d}=X_{t+s+2d},|\pi_{t+s+d}|<s)\notag\\
 &\quad\quad\quad\quad\quad\quad\quad\quad + {\sf P}(X_t=X_{t+d},X_{t+s+d}=X_{t+s+2d},|\pi_{t+s+d}|\geq s) \label{eq:cov_from_above}\\
 &\leq \,\,{\sf P}(X_t=X_{t+d}){\sf P}(X_{s-1}=X_{s-1+d})+{\sf P}(|\pi_{t+s+d}|\geq s)\notag\\
 & = \,\, (1-\varepsilon)^s+{\sf E}\zeta_t{\sf E}\zeta_{s-1} \,\, \leq \,\, {\sf E}\zeta_t{\sf E}\zeta_{t+s+d}+\frac{\delta^2\beta}{6}.\notag
\end{align}
Summing up, for all $t$ large enough,
$$
  \mathrm{Var}S_{t+d}=\frac{1}{t^2}\sum_{1\leq i,j\leq t}\mathrm{cov}(\zeta_i,\zeta_j)<\frac{\delta^2\beta}{6}+
  \frac{1}{t^2}\sum_{|i-j|\leq s_0}\mathrm{cov}(\zeta_i,\zeta_j)<\frac{\delta^2\beta}{6}+\frac{2s_0+1}{t}<\frac{\delta^2\beta}{4}.
$$
Combining this with (\ref{eq:Cheb}), we get the desired convergence in probability.
\end{proof}

\subsection{Expectation and variance}

As above, we set $\zeta_t=I(X_t=X_{t+d})$. Lemma~\ref{lm:lll} states that $\frac{\zeta_1+\ldots+\zeta_{t-d}}{t-d}\stackrel{\sf P}\to\frac{1}{2}(1+p_d)$. In order to optimise the denominator in this law of large numbers, let us bound from above $\mathrm{Var}S_t$. Applying the same argument as in (\ref{eq:cov_from_above}), we get that, for all $s\geq 1$,
\begin{multline*}
 {\sf E}\zeta_t\zeta_{t+s+d}\,\,=\,\,
  {\sf P}(X_{t}=X_{t+d}){\sf P}(X_{t+s+d}=X_{t+s+2d},|\pi_{t+s+d}|<s)\\
 +{\sf P}(X_{t}=X_{t+d},X_{t+s+d}=X_{t+s+2d},|\pi_{t+s+d}|\geq s).
\end{multline*}
Therefore,
\begin{equation}
 |\mathrm{cov}(\zeta_t,\zeta_{t+s+d})|\leq{\sf P}(|\pi_{t+s+d}|\geq s)=(1-\varepsilon)^s
\label{eq:cov_zeta_above}
\end{equation}
implying that
$$
 \mathrm{Var}(\zeta_1+\ldots+\zeta_t)=\sum_{j_1,j_2\in[t]}\mathrm{cov}(\zeta_{j_1},\zeta_{j_2})=\sum_{|j_1-j_2|\leq d}\mathrm{cov}(\zeta_{j_1},\zeta_{j_2})+
 \sum_{|j_1-j_2|>d}\delta(j_1,j_2),
$$
where $|\delta(j_1,j_2)|\leq(1-\varepsilon)^{|j_1-j_2|-d}$. Since $\mathrm{cov}(\zeta_{j_1},\zeta_{j_2})\leq 1$, we get
\begin{equation}
 \mathrm{Var}(\zeta_1+\ldots+\zeta_t)\leq(2d+1)t+2t\sum_{s=1}^{\infty}(1-\varepsilon)^s=O\left(t\right).
\label{eq:var_above_sublinear}
\end{equation}

May $\mathrm{Var}(\zeta_1+\ldots+\zeta_t)$ be sublinear? We do not have an answer. However, we prove that it is either linear in $t$ or $O(\ln t)$.\\

\begin{lemma} For every $d\in\mathbb{N}$,
\begin{enumerate}
\item ${\sf E}S^{(d)}_t=\frac{1}{2}(1+p_d)-O\left(\frac{1}{t}\right)$;
\item either $\mathrm{Var}(\zeta_1+\ldots+\zeta_t)=O(\ln t)$,\\ or $\mathrm{Var}(\zeta_1+\ldots+\zeta_t)=(\sigma+o(1))t$\,\, for some $\sigma\in(0,1)$.
\end{enumerate}
\label{lm:var_linear}
\end{lemma}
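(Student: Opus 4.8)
The first claim is essentially already done: by~\refeq{limit_expectation} we have ${\sf E}\zeta_t=\frac12(1+p_{t;d})$, and since $p_{t;d}\uparrow p_d$ the monotone error $p_d-p_{t;d}$ must be quantified. Since $p_d-p_{t;d}\le{\sf P}(|\pi_t|\ge t)=(1-\varepsilon)^t$, the approximation is in fact exponentially good, hence ${\sf E}S^{(d)}_t=\frac{1}{t-d}\sum_{s=1}^{t-d}{\sf E}\zeta_s=\frac12(1+p_d)-\frac{1}{t-d}\sum_{s}(p_d-p_{s;d})$, and the subtracted sum is $O(1)$, giving the $O(1/t)$ error. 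So the real content is the dichotomy in part (2).

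The plan is to write $\mathrm{Var}(\zeta_1+\dots+\zeta_t)=\sum_{j_1,j_2}\mathrm{cov}(\zeta_{j_1},\zeta_{j_2})$ and to control this via the \emph{limiting covariances}. For a lag $r>d$, set $c_r:=\lim_{t\to\infty}\mathrm{cov}(\zeta_t,\zeta_{t+r})$; this limit exists because the NVM, run forward, has a well-defined asymptotic law for any fixed window of coordinates (one can see this by the same ``longest-path'' coupling used throughout: the joint law of $(X_t,X_{t+d},X_{t+r},X_{t+r+d})$ converges as $t\to\infty$). From~\refeq{cov_zeta_above} we have $|\mathrm{cov}(\zeta_t,\zeta_{t+r})|\le(1-\varepsilon)^{r-d}$ uniformly in $t$, so $|c_r|\le(1-\varepsilon)^{r-d}$ and the series $\sigma:=\sum_{r\in\mathbb{Z}}c_{|r|}$ (with $c_0=\mathrm{Var}\zeta_\infty$ and the small-lag terms $c_1,\dots,c_d$ also defined as limits) converges absolutely. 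A standard Cesàro/dominated-convergence argument then shows
$$
\frac{1}{t}\,\mathrm{Var}(\zeta_1+\dots+\zeta_t)\longrightarrow\sigma\qquad(t\to\infty),
$$
so that $\mathrm{Var}(\zeta_1+\dots+\zeta_t)=(\sigma+o(1))t$. The point of the dichotomy is that $\sigma\ge0$ always (it is a limit of variances divided by $t$), and if $\sigma>0$ we are in the second case with $\sigma<1$ by~\refeq{var_above_sublinear}. It remains to treat $\sigma=0$: here the linear term vanishes and we must show the variance is only $O(\ln t)$.

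The $\sigma=0$ case is the main obstacle. When $\sigma=0$, heavy cancellation forces all the $c_r$ to interlock; I would argue as follows. Write $\mathrm{Var}(\zeta_1+\dots+\zeta_t)=\sum_{|r|<t}(t-|r|)\,\gamma_t(r)$ where $\gamma_t(r)=\mathrm{cov}(\zeta_1,\zeta_{1+r})$ averaged appropriately, and split off the limit: $\gamma_t(r)=c_{|r|}+\delta_t(r)$ with $|\delta_t(r)|\le C(1-\varepsilon)^{\min(r,t-r)}$ decaying exponentially as $r$ leaves a window near $0$ or near $t$ (again from the exponential mixing in~\refeq{cov_zeta_above} together with the exponential rate of convergence $p_d-p_{t;d}=O((1-\varepsilon)^t)$). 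The contribution of $\sum_{|r|<t}(t-|r|)c_{|r|}$ is $t\sigma-\sum_{|r|<t}|r|c_{|r|}+o(1)$, whose leading term is $0$ and whose remaining $\sum|r|c_{|r|}$ is a bounded constant; the contribution of the $\delta_t(r)$ terms is $\sum_{|r|<t}(t-|r|)\cdot O((1-\varepsilon)^{\min(r,t-r)})=O(1)$ after summing the geometric series against the linear weight. This would give $O(1)$, which is stronger than claimed; the $O(\ln t)$ slack is there presumably because the convergence $\gamma_t(r)\to c_{|r|}$ may only be polynomial in pathological cases (e.g.\ transient graphs where $p_{t;d}$ converges slowly), and then one keeps $\sum_{|r|<t}(t-|r|)\,O(1/\min(r,t-r))=O(t\ln t/t)=O(\ln t)$ — so the careful step is to establish the rate of convergence of the finite-window laws of the NVM and feed the worst admissible rate into the weighted sum. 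This is where most of the real work lies; everything else is bookkeeping with geometric series.
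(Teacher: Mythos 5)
Part 1 of your argument is fine and is essentially the paper's: the exponential bound on $p_d-p_{s;d}$ (the paper's \refeq{bern_param_bounds}) summed over $s$ gives the $O(1/t)$ error. For part 2 you take a genuinely different route from the paper: you sum the limiting covariances $c_r$ directly, whereas the paper proves approximate super- and subadditivity of the block variances $f^*(t)=\mathrm{Var}(\zeta_{\lceil\delta_1t\rceil}+\ldots+\zeta_t)$ and $f_*(t)$ over a geometric decomposition of $[1,t]$ into $O(\log t)$ blocks (that decomposition is where the paper's $O(\ln t)$ branch comes from). Your $\sigma>0$ branch is sound: existence of $c_r$ together with the uniform bound \refeq{cov_zeta_above} and dominated convergence does give $\frac1t\mathrm{Var}(\zeta_1+\cdots+\zeta_t)\to\sigma$. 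Two caveats, though: the existence of $c_r$ itself needs an argument (see below), and $\sigma<1$ does not follow from \refeq{var_above_sublinear}, whose implied constant is $2d+1+2(1-\varepsilon)/\varepsilon$, not $1$.

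The genuine gap is exactly where you place it: the $\sigma=0$ case, where convergence of $\frac1t\mathrm{Var}$ alone yields only $o(t)$, and you leave the required rate unproved while voicing a worry (``transient graphs where $p_{t;d}$ converges slowly'') that is in fact unfounded. The missing ingredient is the translation-stability estimate $|\mathrm{cov}(\zeta_{t_1},\zeta_{t_2})-\mathrm{cov}(\zeta_{t_1+\tau},\zeta_{t_2+\tau})|\le 4(1-\varepsilon)^{t_1}$ (the paper's Claim~\ref{cl:var_2}): under a time shift, the joint law of the coalescence pattern of $\pi_{t_1},\pi_{t_1+d},\pi_{t_2},\pi_{t_2+d}$ can only change on the event that one of these paths survives at least $t_1$ steps, whose probability is $O((1-\varepsilon)^{t_1})$ on \emph{any} graph --- the noise, not the geometry, controls this rate, so there is no pathological polynomial regime. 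This one estimate simultaneously (a) shows $\mathrm{cov}(\zeta_t,\zeta_{t+r})$ is Cauchy in $t$ uniformly in $r$, hence $c_r$ exists with $|\mathrm{cov}(\zeta_{j_1},\zeta_{j_2})-c_{|j_1-j_2|}|\le\min\{4(1-\varepsilon)^{\min(j_1,j_2)},\,2(1-\varepsilon)^{|j_1-j_2|-d}\}\le C(1-\varepsilon)^{(\max(j_1,j_2)-d)/2}$, and (b) makes the total error $\sum_{j_1,j_2\le t}$ equal to $O(1)$, giving $\mathrm{Var}(\zeta_1+\cdots+\zeta_t)=\sigma t+O(1)$. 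With this inserted your argument closes and in fact yields a conclusion stronger than the lemma ($O(1)$ rather than $O(\ln t)$ when $\sigma=0$). As written, however, the decisive estimate is flagged as ``where most of the real work lies'' rather than carried out, so the proposal is incomplete at its critical step.
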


Note that we may easily replace $S^{(d)}$ with another statistic $\tilde S^{(d)}$ such that Lemma~\ref{lm:lll} and Theorem~\ref{th:NVM_main} hold for $\tilde  S^{(d)}$ as well but $\mathrm{Var}\tilde S^{(d)}_t=\Theta(1/t)$. Indeed, let $\kappa$ be so large that 
\begin{equation}
4(1-\varepsilon)^{\kappa-d}<(1-(1-\varepsilon)^{\kappa})(1+p_d)\left(1-\frac{1}{2}(1+p_d)\right)
\label{eq:kappa_choice}
\end{equation}
and 
$$
\tilde S^{(d)}_t=\frac{1}{t-d}\sum_{i=1}^{t-d}I(X_{\kappa i}=X_{\kappa i+d}).
$$
Note that 
\begin{equation}
{\sf P}(X_s=X_{s+d})\leq{\sf P}(X_{s+1}=X_{s+1+d})\leq{\sf P}(X_s=X_{s+d})+(1-\varepsilon)^{s+1}
\label{eq:bern_param_bounds_recur_bound}
\end{equation}
since, due to~(\ref{eq:prob_duplicate}),
\begin{align*}
 {\sf P}(X_{s+1}=X_{s+1+d}) \,\, 
 &=\,\,\frac{1}{2}(1+{\sf P}(\pi_{s+1}\cap\pi_{s+1+d}\neq\varnothing))\\
 &\leq\,\,\frac{1}{2}(1+{\sf P}(\pi_{s}\cap\pi_{s+d}\neq\varnothing)+{\sf P}(|\pi_{s+1}|=s+1))\\
 &=\,\,{\sf P}(X_s=X_{s+d})+\frac{1}{2}(1-\varepsilon)^{s+1}.
\end{align*}
Therefore, for every $s$, 
\begin{equation}
\max\left\{\frac{1}{2},\frac{1}{2}(1+p_d)-\frac{(1-\varepsilon)^s}{\varepsilon}\right\}\leq{\sf P}(X_s=X_{s+d})\leq \frac{1}{2}(1+p_d)
\label{eq:bern_param_bounds}
\end{equation}
implying that (since  ${\sf P}(X_s=X_{s+d})\geq\frac{1}{2}$ and $f(x)=x-x^2$ decreases on $[1/2,1]$)
$$
 \mathrm{Var}\zeta_s={\sf P}(X_s=X_{s+d})-[{\sf P}(X_s=X_{s+d})]^2\geq \frac{1}{2}(1+p_d)\left(1-\frac{1}{2}(1+p_d)\right).
$$
Then, arguing in a similar way to (\ref{eq:var_above_sublinear}), from (\ref{eq:cov_zeta_above}) and (\ref{eq:kappa_choice}), we get
\begin{align*}
 \mathrm{Var}(\zeta_{\kappa}+\zeta_{2\kappa}+\ldots+\zeta_{\kappa t}) \,\, & \geq \,\,
 \mathrm{Var}\zeta_{\kappa}+\ldots+\mathrm{Var}\zeta_{\kappa t}-2t\sum_{i=1}^{\infty}(1-\varepsilon)^{\kappa i-d}\\
 & \geq \,\,
 \frac{t}{2}(1+p_d)\left(1-\frac{1}{2}(1+p_d)\right)-\frac{2t(1-\varepsilon)^{\kappa-d}}{1-(1-\varepsilon)^{\kappa}} \,\,
 =\,\,\Omega(t).
\end{align*}

\begin{remark}
\label{rk:tilde_S}
In the same way as in the proof of Lemma~\ref{lm:var_linear} below, we may show that there exists $\lim_{t\to\infty}\frac{\mathrm{Var}(\zeta_{\kappa}+\zeta_{2\kappa}+\ldots+\zeta_{\kappa t})}{t}$.\\
\end{remark}

\subsubsection{Proof of Lemma~\ref{lm:var_linear}}

The asymptotics of the expectation immediately follows from (\ref{eq:bern_param_bounds}): 
$$
 \frac{1}{2}(1+p_d)-\frac{1}{\varepsilon^2(t-d)}\leq {\sf E}S_t=\frac{1}{t-d}\sum_{s=1}^{t-d}{\sf P}(X_s=X_{s+d})\leq\frac{1}{2}(1+p_d).
$$

Let us now switch to the variance. Assume that $\mathrm{Var}(\zeta_1+\ldots+\zeta_t)\neq O(\ln t)$. 
We need the following observation that immediately follows from (\ref{eq:cov_zeta_above}) and from the existence of $\lim_{t\to\infty}\mathrm{cov}(\zeta_t,\zeta_{t+s})$ for every fixed $s$.\\

\begin{claim} $\,$ 
\begin{enumerate}
\item There exists a finite
$$
 \lim_{s,t\to\infty}\mathrm{cov}(\zeta_t,\zeta_{t+1}+\ldots+\zeta_{t+s})=:c.
$$
\item For every $t$, there exists a finite
$$
 \lim_{s_1,s_2\to\infty}\mathrm{cov}(\zeta_{t+1}+\ldots+\zeta_{t+s_1},\zeta_{t+s_1+1}+\ldots+\zeta_{t+s_1+s_2})=:C,
$$
the convergence is uniform over all $t$, and $C$ does not depend on $t$.
\end{enumerate}
\label{cl:var_1}
\end{claim}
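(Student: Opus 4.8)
The plan is to exploit two facts established earlier in the excerpt: first, that the individual covariances decay geometrically, $|\mathrm{cov}(\zeta_t,\zeta_{t+s+d})|\le(1-\varepsilon)^s$ by~(\ref{eq:cov_zeta_above}); and second, that for every fixed lag $s$ the covariance $\mathrm{cov}(\zeta_t,\zeta_{t+s})$ converges as $t\to\infty$ (this follows from the convergence of the relevant finite-time probabilities, since everything is governed by the finitely many path configurations of $\mathcal{D}_G$ of bounded length and these stabilize as $t\to\infty$, just as $p_{t;d}\to p_d$). Write $c_s(t):=\mathrm{cov}(\zeta_t,\zeta_{t+s})$ and let $c_s:=\lim_{t\to\infty}c_s(t)$.

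For part (1), I would expand
$$
\mathrm{cov}(\zeta_t,\zeta_{t+1}+\ldots+\zeta_{t+s})=\sum_{r=1}^{s}c_r(t).
$$
By~(\ref{eq:cov_zeta_above}) we have $|c_r(t)|\le(1-\varepsilon)^{r-d}$ for $r>d$ and $|c_r(t)|\le 1$ for $r\le d$, uniformly in $t$, so the series $\sum_{r\ge1}c_r(t)$ converges absolutely, uniformly in $t$; hence $\lim_{t\to\infty}\sum_{r=1}^{\infty}c_r(t)=\sum_{r=1}^{\infty}c_r$ exists and is finite, and the tail $\sum_{r>s}c_r(t)$ is small uniformly in $t$ once $s$ is large. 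Combining these, the double limit $\lim_{s,t\to\infty}\sum_{r=1}^{s}c_r(t)$ exists and equals $c:=\sum_{r=1}^{\infty}c_r$.

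For part (2), I would use bilinearity to write
$$
\mathrm{cov}\Bigl(\sum_{i=1}^{s_1}\zeta_{t+i},\,\sum_{j=1}^{s_2}\zeta_{t+s_1+j}\Bigr)=\sum_{i=1}^{s_1}\sum_{j=1}^{s_2}c_{s_1+j-i}(t+i).
$$
Changing to the lag variable $r=s_1+j-i$ (which ranges over $r\ge 1$, since $j\ge1$ and $i\le s_1$), each fixed $r$ is hit by at most $\min(s_1,s_2,r)\le r$ pairs $(i,j)$; but because $|c_r|$ decays geometrically, the weighting by the number of pairs is harmless, and the double sum converges absolutely and uniformly in $t,s_1,s_2$. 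As $s_1,s_2\to\infty$ every pair $(i,j)$ with bounded $r$ and bounded $i$ eventually contributes $c_r$ (using $c_r(t+i)\to c_r$ as $i\to\infty$, uniformly over the finitely many small $r$), and one checks the contribution splits so that the limit is $C:=\sum_{r\in\mathbb{Z}}c_r$ where we extend $c_r:=c_{-r}$ for $r<0$ and $c_0:=\lim_{t}\mathrm{Var}\zeta_t$ (this limit exists by the same stabilization argument and~(\ref{eq:bern_param_bounds})). Uniformity in $t$ and independence of $C$ from $t$ follow because all the bounds used are uniform in $t$.

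The main obstacle is bookkeeping the double-limit in part (2) carefully: one must show that the ``boundary'' pairs $(i,j)$ with $i$ small or $j$ small contribute negligibly while the ``bulk'' pairs each converge to the appropriate $c_r$, and that the geometric decay dominates the linear multiplicity of each lag — i.e., that $\sum_r r(1-\varepsilon)^{r}<\infty$ absorbs the counting. Everything else is routine manipulation of absolutely convergent series with uniform tails.
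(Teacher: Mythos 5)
Your argument establishes the claim and follows exactly the route the paper intends: the paper gives no written proof, asserting that the claim ``immediately follows'' from the geometric decay bound (\ref{eq:cov_zeta_above}) and the existence of $\lim_{t\to\infty}\mathrm{cov}(\zeta_t,\zeta_{t+s})$ for each fixed $s$, and these are precisely the two ingredients you combine. Part (1) is correct, including the identification $c=\sum_{r\geq1}c_r$, and the existence/uniformity mechanism in part (2) (geometric decay dominating the at-most-$r$ multiplicity of each lag $r$, plus stabilization of $c_r(t)$ in $t$) is sound.

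However, the value you assign to $C$ in part (2) is wrong. The two blocks are disjoint, so the lag $r=s_1+j-i$ is always at least $1$; no term with $r=0$ or $r<0$ ever appears, and for each fixed $r\geq1$ the number of pairs $(i,j)$ realizing that lag stabilizes at exactly $r$ (namely $i\in\{s_1-r+1,\ldots,s_1\}$). Hence $C=\sum_{r\geq1}r\,c_r$, not $\sum_{r\in\mathbb{Z}}c_r$ with $c_0=\lim_t\mathrm{Var}\,\zeta_t$. A quick sanity check: for i.i.d.\ $\zeta$'s the covariance of two disjoint blocks is $0$, whereas your formula would give $\mathrm{Var}\,\zeta\neq0$. (The quantity $\sum_{r\in\mathbb{Z}}c_r$ is instead the asymptotic variance per unit time of $\zeta_1+\ldots+\zeta_N$.) Since the claim only asserts existence, finiteness, uniformity in $t$, and independence of $C$ from $t$, this does not invalidate your proof of the statement, but the stated limit should be corrected.
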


Another observation that we need is that the variance does not change much in response to translations.\\

\begin{claim} Let $t,s,\tau\in\mathbb{N}$. Then
$$
 |\mathrm{Var}(\zeta_{t+1}+\ldots+\zeta_{t+s})-\mathrm{Var}(\zeta_{t+\tau+1}+\ldots+\zeta_{t+s+\tau})|\leq
 8\frac{s}{\varepsilon}(1-\varepsilon)^{t}.
$$
\label{cl:var_2}
\end{claim}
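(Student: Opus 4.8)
The plan is to prove Claim~\ref{cl:var_2} by a coupling argument in which the NVM $(X_r)_{r\ge0}$ on $(G,u)$ is run together with a second, $\tau$-shifted copy of itself. Let $(\xi_j(v),\eta_j(v),Y_j(v))_{j\ge1,\,v\in V(G)}$ and $(X_0(v))_{v\in V(G)}$ be the i.i.d.\ inputs defining $X$, and let $(\hat X_0(v))_{v\in V(G)}$ be a further independent family of i.i.d.\ $\mathrm{Bern}(1/2)$ variables. Define $(\hat X_r)_{r\ge0}$ by the NVM recursion with initial data $\hat X_0$ and driving variables $\hat\xi_j(v):=\xi_{j+\tau}(v)$, $\hat\eta_j(v):=\eta_{j+\tau}(v)$, $\hat Y_j(v):=Y_{j+\tau}(v)$ for $j\ge1$. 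Reindexing an i.i.d.\ family preserves its law, so $(\hat\xi,\hat\eta,\hat Y)$ is distributed as $(\xi,\eta,Y)$ and is independent of $\hat X_0$; hence $(\hat X_r)_{r\ge0}$ has the same law as $(X_r)_{r\ge0}$. In particular, with $\hat\zeta_r:=I(\hat X_r=\hat X_{r+d})$, the vector $(\hat\zeta_{t+1},\ldots,\hat\zeta_{t+s})$ has the same law as $(\zeta_{t+1},\ldots,\zeta_{t+s})$, and therefore $\mathrm{Var}(\hat\zeta_{t+1}+\ldots+\hat\zeta_{t+s})=\mathrm{Var}(\zeta_{t+1}+\ldots+\zeta_{t+s})$.

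Next I would control the coupling error. Unrolling the recursion for $\hat X_r(u)$ and for $X_{r+\tau}(u)$ along the backward path from $(u,r+\tau)$ in $\mathcal{D}_G$, the two use the same $\xi,\eta,Y$ at every step and therefore agree, unless that path has at least $r$ edges (equivalently, descends to time level $\tau$), in which case $\hat X_r$ reads off $\hat X_0$ while $X_{r+\tau}$ keeps going; this bad event is $\{|\pi_{r+\tau}|\ge r\}$ and has probability $(1-\varepsilon)^r$. Hence ${\sf P}(\hat X_r\neq X_{r+\tau})\le(1-\varepsilon)^r$, so
$$
{\sf P}(\hat\zeta_{t+i}\neq\zeta_{t+\tau+i})\le{\sf P}(\hat X_{t+i}\neq X_{t+\tau+i})+{\sf P}(\hat X_{t+i+d}\neq X_{t+\tau+i+d})\le 2(1-\varepsilon)^{t+i}.
$$
Setting $N:=\#\{\,i\in[s]:\hat\zeta_{t+i}\neq\zeta_{t+\tau+i}\,\}$, this gives ${\sf E}N\le\sum_{i=1}^{s}2(1-\varepsilon)^{t+i}\le\frac{2}{\varepsilon}(1-\varepsilon)^{t}$.

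Finally I would compare the two variances directly, as whole blocks rather than covariance by covariance (a termwise comparison costs an extra factor $1/\varepsilon$ in the summation). Put $Z:=\hat\zeta_{t+1}+\ldots+\hat\zeta_{t+s}$ and $Z':=\zeta_{t+\tau+1}+\ldots+\zeta_{t+\tau+s}$; these are defined on the same space, take values in $[0,s]$, and satisfy $|Z-Z'|\le N$. Then $|{\sf E}Z^2-{\sf E}Z'^2|=|{\sf E}[(Z-Z')(Z+Z')]|\le 2s\,{\sf E}N$ and $|({\sf E}Z)^2-({\sf E}Z')^2|=|{\sf E}(Z-Z')|\cdot|{\sf E}(Z+Z')|\le 2s\,{\sf E}N$, whence
$$
\bigl|\mathrm{Var}(\zeta_{t+1}+\ldots+\zeta_{t+s})-\mathrm{Var}(\zeta_{t+\tau+1}+\ldots+\zeta_{t+s+\tau})\bigr|=|\mathrm{Var}Z-\mathrm{Var}Z'|\le 4s\,{\sf E}N\le 8\frac{s}{\varepsilon}(1-\varepsilon)^{t},
$$
which is exactly the claimed inequality.

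I expect the only real obstacle to be the coupling in the first two paragraphs: one must make $(\hat X_r)$ a bona fide NVM (same law as $X$) while driving it by the $\tau$-shifted noise of $X$, and then argue cleanly---via the path-unrolling picture for $\mathcal{D}_G$---that $\hat X_r$ and $X_{r+\tau}$ can disagree only on $\{|\pi_{r+\tau}|\ge r\}$, the point being that the shifted copy exhausts its ``past'' exactly $\tau$ steps before the original does. After that, the variance comparison is a routine second-moment estimate. (Alternatively one could couple $X$ with the stationary NVM on all of $\mathbb{Z}$, but the finite shift keeps everything within the $\mathbb{Z}_+$ framework already in use.)
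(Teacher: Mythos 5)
Your proof is correct. It rests on the same underlying fact as the paper's proof --- that a time shift by $\tau$ can only change the observation at time $r+\tau$ on the event $\{|\pi_{r+\tau}|\geq r\}$, of probability $(1-\varepsilon)^r$ --- but it packages this differently. The paper never builds an explicit coupling: it compares the two variances covariance by covariance, showing (by the same argument as in \refeq{bern\_param\_bounds\_recur\_bound}, applied to the joint event $\{\zeta_{t_1}=1,\zeta_{t_2}=1\}$) that $|\mathrm{cov}(\zeta_{t_1},\zeta_{t_2})-\mathrm{cov}(\zeta_{t_1+\tau},\zeta_{t_2+\tau})|<4(1-\varepsilon)^{t_1}$ for $t_1\leq t_2$, and then sums over pairs. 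You instead realize the shifted process on the same probability space via the shifted driving noise and a fresh initial condition, bound ${\sf E}N$, and compare the block second moments directly; each step (the law-preservation of the shifted noise, the disagreement bound, the estimate $|\mathrm{Var}Z-\mathrm{Var}Z'|\leq 4s\,{\sf E}N$) is sound and yields exactly the stated constant. One small inaccuracy in your closing parenthetical: the termwise route does \emph{not} cost an extra factor $1/\varepsilon$ --- since the termwise bound decays like $(1-\varepsilon)^{t+\min(i,j)}$, summing over $i,j\in[s]$ gives at most $2s\sum_{k\geq1}(1-\varepsilon)^{t+k}\cdot 4\leq 8\frac{s}{\varepsilon}(1-\varepsilon)^t$, i.e.\ the same bound the paper states. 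So the two approaches are essentially equivalent in strength; yours makes the coupling explicit (which is arguably cleaner to verify), while the paper's reuses a bound it has already established and avoids introducing an auxiliary process.
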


\begin{proof} In the same way as in (\ref{eq:bern_param_bounds_recur_bound}), for every $t_1\leq t_2$,
\begin{align*}
 |\mathrm{cov}(\zeta_{t_1},\zeta_{t_2})-\mathrm{cov}(\zeta_{t_1+\tau},\zeta_{t_2+\tau})|\,\, &\leq\,\,
 |{\sf P}(\zeta_{t_1}=1,\zeta_{t_2}=1)-{\sf P}(\zeta_{t_1+\tau}=1,\zeta_{t_2+\tau}=1)|\\
 &\quad+|{\sf P}(\zeta_{t_1}=1){\sf P}(\zeta_{t_2}=1)-{\sf P}(\zeta_{t_1+\tau}=1){\sf P}(\zeta_{t_2+\tau}=1)|\\
&\leq\,\, (1-\varepsilon)^{t_1}+(1-\varepsilon)^{t_1}+(1-\varepsilon)^{t_2}+(1-\varepsilon)^{t_1+t_2}\\
&<\,\,4(1-\varepsilon)^{t_1}.
\end{align*}
This immediately implies the desired inequality. 
\end{proof}

We choose sufficiently small $\delta_1>0,\delta_2>0$. Let $T_0$ be so large that, for any $T\geq T_0$, 
\begin{equation}
 8\frac{T}{\varepsilon}(1-\varepsilon)^{\delta_1 T}<\delta_2,
\label{eq:cond_delta_T_2}
\end{equation}
\begin{equation}
 \frac{1}{\varepsilon^2(1-\varepsilon)^d}\sum_{j=1}^{\infty}j(1-\varepsilon)^{\frac{T}{2\delta_1^j}}<\delta_2.
\label{eq:cond_delta_T_3}
\end{equation}
Due to Claim~\ref{cl:var_1}, there exists $T_0'\geq T_0$ so large that
\begin{equation}
 |\mathrm{cov}(\zeta_t,\zeta_{t+1}+\ldots+\zeta_{t+s})-c|<\delta_2,\quad
 \text{for all }t,s\geq\lfloor\delta_1 T_0'\rfloor,
\label{eq:cond_delta_T_4}
\end{equation}
\begin{equation}
 |\mathrm{cov}(\zeta_{t+1}+\ldots+\zeta_{t+s_1},\zeta_{t+s_1+1}+\ldots+\zeta_{t+s_1+s_2})-C|<\delta_2,\quad
 \text{for all }s_1,s_2\geq\lfloor\delta_1 T_0'\rfloor\text{ and all }t.
\label{eq:cond_delta_T_5}
\end{equation}
By assumption, $\frac{1}{\ln t}\mathrm{Var}(\zeta_1+\ldots+\zeta_t)$ can be arbitrarily large. Therefore, there exists arbitrarily large $t$ such that 
$$
\max_{s\in[(1/\delta_1)^t,(1/\delta_1)^{t+1}]}\frac{1}{\ln s}\mathrm{Var}(\zeta_1+\ldots+\zeta_s)\geq\max_{s\in[(1/\delta_1)^{t-1},(1/\delta_1)^{t}]}\frac{1}{\ln s}\mathrm{Var}(\zeta_1+\ldots+\zeta_s),
$$
and
$$
\max_{s\in[(1/\delta_1)^t,(1/\delta_1)^{t+1}]}\frac{1}{\ln s}\mathrm{Var}(\zeta_1+\ldots+\zeta_s)\stackrel{t\to\infty}\longrightarrow\infty.
$$
Thus, 
 there exists $T\geq T_0'$ so large that 
\begin{equation}
\mathrm{Var}(\zeta_1+\ldots+\zeta_T)-\mathrm{Var}(\zeta_1+\ldots+\zeta_{\lceil\delta_1 T\rceil-1})>5|C|+2|c|+11\delta_2+1
\label{eq:cond_delta_T_1}
\end{equation}
since the left hand side could be arbitrarily large.

Set 
$$
f^*(t)=\mathrm{Var}(\zeta_{\lceil\delta_1 t\rceil}+\ldots+\zeta_t),\quad
f_*(t)=\mathrm{Var}(\zeta_{\lfloor\delta_1 t\rfloor}+\ldots+\zeta_t).
$$

Therefore, $|f^*(t)-f_*(t )|\leq 1+2(|c|+\delta_2)$ for all $t\geq T$. So, from (\ref{eq:cond_delta_T_5}) and (\ref{eq:cond_delta_T_1}),
\begin{equation}
\min\{f_*(T),f^*(T)\}>3|C|+7\delta_2.
\label{eq:f_star_bounded_from_0}
\end{equation}

Assume first that $c\leq 0$. Then, for $t,s\geq T$, from Claim~\ref{cl:var_2} and bounds (\ref{eq:cond_delta_T_2}), (\ref{eq:cond_delta_T_4}), (\ref{eq:cond_delta_T_5}),
\begin{align*}
 f^*(t+s)\,\,
 &=\,\,\mathrm{Var}\biggl(\zeta_{\lceil\delta_1 (t+s)\rceil}+\ldots+\zeta_{\lceil\delta_1 (t+s)\rceil-\lceil\delta_1 t\rceil+t}+\zeta_{\lceil\delta_1 (t+s)\rceil-\lceil\delta_1 t\rceil+t+1}+\ldots+\zeta_{t+s}\biggr)\\
&\geq\,\,\mathrm{Var}(\zeta_{\lceil\delta_1 (t+s)\rceil}+\ldots+\zeta_{\lceil\delta_1 (t+s)\rceil-\lceil\delta_1 t\rceil+t})\\
&\quad\quad\quad\quad\quad\quad\quad\quad+\mathrm{Var}(\zeta_{\lceil\delta_1 (t+s)\rceil-\lceil\delta_1 t\rceil+t+1}+\ldots+\zeta_{t+s})+2C-2\delta_2\\
&\geq\,\,\mathrm{Var}(\zeta_{\lceil\delta_1 (t+s)\rceil}+\ldots+\zeta_{\lceil\delta_1 (t+s)\rceil-\lceil\delta_1 t\rceil+t})\\
&\quad\quad\quad\quad\quad\quad\quad\quad+\mathrm{Var}(\zeta_{t+\lceil \delta_1 s\rceil}+\ldots+\zeta_{t+s})+2C-4\delta_2\\
&\geq \,\, f^*(t)+f^*(s)+2C-4\delta_2-8\frac{t}{\varepsilon}(1-\varepsilon)^{\lceil\delta_1 t\rceil}-8\frac{s}{\varepsilon}(1-\varepsilon)^{\lceil\delta_1 s\rceil}\\
&>\,\,f^*(t)+f^*(s)+2C-6\delta_2.
\end{align*}
Letting $\tilde f^*(t)=f^*(t)+2C-6\delta_2$, we get that $\tilde f^*(t+s)\geq\tilde f^*(s)+\tilde f^*(t)$ for all $t,s\geq T$ and, due to (\ref{eq:f_star_bounded_from_0}), $\tilde f^*(T)>|C|+\delta_2$. This immediately implies that $\tilde f^*(t)\geq\frac{\tilde f^*(T)}{T}t+O(1)$, and the same is true for $f^*(t)$. \\

On the other hand, for $t,s\geq T$, from Claim~\ref{cl:var_2}  and bounds (\ref{eq:cond_delta_T_2}), (\ref{eq:cond_delta_T_4}), (\ref{eq:cond_delta_T_5}),
\begin{align*}
 f_*(t+s)\,\, 
 &\leq \,\, \mathrm{Var}(\zeta_{\lfloor\delta_1 (t+s)\rfloor}+\ldots+\zeta_{\lfloor\delta_1 (t+s)\rfloor-\lfloor\delta_1 t\rfloor+t})\\
 &\quad\quad\quad\quad\quad\quad\quad\quad+\mathrm{Var}(\zeta_{\lfloor\delta_1 (t+s)\rfloor-\lfloor\delta_1 t\rfloor+t+1}+\ldots+\zeta_{t+s})+2C+2\delta_2\\
&\leq\,\,\mathrm{Var}(\zeta_{\lfloor\delta_1 (t+s)\rfloor}+\ldots+\zeta_{\lfloor\delta_1 (t+s)\rfloor-\lfloor\delta_1 t\rfloor+t})\\
&\quad\quad\quad\quad\quad\quad\quad\quad+\mathrm{Var}(\zeta_{t+\lfloor \delta_1 s\rfloor}+\ldots+\zeta_{t+s})+2C+4-4c+6\delta_2\\
&\leq\,\,f_*(t)+f_*(s)+2C+4-4c+6\delta_2+8\frac{t}{\varepsilon}(1-\varepsilon)^{\lfloor\delta_1 t\rfloor}+8\frac{s}{\varepsilon}(1-\varepsilon)^{\lfloor\delta_1 s\rfloor}\\
&<\,\, f_*(t)+f_*(s)+2C+4-4c+8\delta_2.
\end{align*}
Letting $\tilde f_*(t)=f_*(t)+2C+4-4c+8\delta_2$, we get $\tilde f_*(t+s)\leq\tilde f_*(s)+\tilde f_*(t)$ for all $t,s\geq T$. This immediately implies that $\tilde f_*(t)\leq\frac{\tilde f_*(T)}{T}t+O(1)$, and the same is true for $f_*(t)$.\\

Let $t\geq T$. Choose $t_1<\ldots<t_{\ell}<t_{\ell+1}=t+1$ such that $t_{\ell}=\lceil \delta_1 t\rceil$, $t_{\ell-1}=\lceil \delta_1 (t_{\ell}-1)\rceil$, $\ldots$, $t_1=\lceil\delta_1 (t_2-1)\rceil$, $t_1\geq T$ but $\delta_1 t_1< T$. Then since $\ell=O(\log t)$, from (\ref{eq:cov_zeta_above}), (\ref{eq:cond_delta_T_3}) and (\ref{eq:cond_delta_T_5}), we get
\begin{align}
\mathrm{Var}(\xi_1+\ldots+\xi_t) \,\, & \geq \,\,  \mathrm{Var}(\xi_1+\ldots+\xi_{t_1-1})+\sum_{j=2}^{\ell+1}f^*(t_j-1)+2(C-\delta_2)\ell\notag \\
 &\quad\quad\quad\quad+2\sum_{2\leq i+1<j\leq \ell+1}\mathrm{cov}(\xi_{t_i}+\ldots+\xi_{t_{i+1}-1},\xi_{t_j}+\ldots+\xi_{t_{j+1}-1})\notag\\
&\geq\,\, f^*(t)+(C-\delta_2)\ell-\frac{2}{(1-\varepsilon)^d\varepsilon^2}\sum_{1\leq i<j\leq \ell}(1-\varepsilon)^{t_j-t_i}\label{eq:Var_liminf} \\
&\geq\,\, f^*(t)+O(\ln t)-\frac{2}{(1-\varepsilon)^d\varepsilon^2}\sum_{1\leq i<j\leq \ell}(1-\varepsilon)^{\frac{T}{2\delta_1^{j-1}}}\notag\\
&\geq\,\,\frac{\tilde f^*(T)}{T}t+O(\ln t)\,\,\geq\,\,\frac{\tilde f_*(T)+6c-16\delta_2-5}{T}t+O(\ln t).\notag
\end{align}
In particular, we get that 
\begin{equation}
\liminf_{t\to\infty}\frac{\mathrm{Var}(\zeta_1+\ldots+\zeta_t)}{t}\geq\frac{\tilde f^*(T)}{T}>0.
\label{eq:Var_liminf_positive}
\end{equation}
Also,  due to (\ref{eq:var_above_sublinear}), there exists $A>0$ such that $\frac{\tilde f_*(T)}{T}<A$ independently of the choice of $T$.

In the same way, define $\tilde t_1<\ldots<\tilde t_{\ell}<\tilde t_{\ell+1}=t+1$ by $\tilde t_{j-1}=\lfloor \delta_1 (\tilde t_j-1)\rfloor$, $\tilde t_1\geq T$, $\delta_1 \tilde t_1< T$. We get
\begin{align}
\mathrm{Var}(\xi_1+\ldots+\xi_t) \,\,
&\leq\,\, 
\mathrm{Var}(\xi_1+\ldots+\xi_{\tilde t_1-1})+\sum_{j=2}^{\ell+1}f_*(\tilde t_j-1)+O(\ln t)\notag \\
&\leq\,\,\sum_{j=2}^{\ell+1}\frac{\tilde f_*(T)}{T}(\tilde t_j-1)+O(\ln t)\label{eq:Var_limsup}\\
&\leq\,\, \frac{\tilde f_*(T)}{(1-\delta_1)T}t+O(\ln t).\notag
\end{align}
From (\ref{eq:Var_liminf}) and (\ref{eq:Var_limsup}), we conclude that, for $t$ large enough, $\frac{\mathrm{Var}(\xi_1+\ldots+\xi_t)}{t}$ belongs to an interval of length $\frac{A\delta_1}{1-\delta_1}-\frac{6c-16\delta_2-5}{T}$. Due to the arbitrariness of $\delta_1$ and $T$ and due to (\ref{eq:Var_liminf_positive}), we get that  $\lim_{t\to\infty}\frac{1}{t}\mathrm{Var}(\xi_1+\ldots+\xi_t)$ is finite and positive as needed.\\

The case $c\geq 0$ is symmetric and identical. The only difference is that, for $f^*(t+s)$, we need an upper bound, and, for $f_*(t+s)$, we need a lower bound. 

\subsection{Asymptotic normality}
\label{sc:normal}


\begin{theorem}
Let $d\in\mathbb{N}$.
\begin{enumerate}
\item
 If  $\frac{\ln t}{t^2}=o(\mathrm{Var}S^{(d)}_t)$, then there exists a finite $\lim\limits_{t\to\infty}t\mathrm{Var}S_t^{(d)}$
and
$$
 \frac{S^{(d)}_t-{\sf E}S^{(d)}_t}{\sqrt{\mathrm{Var}S^{(d)}_t}}\stackrel{d}\to\eta\sim\mathcal{N}\left(0,1\right), \quad t\to\infty.
$$
\item There exists a finite $\lim\limits_{t\to\infty}t\mathrm{Var}\tilde S^{(d)}_t$ and
$$
 \frac{\tilde S^{(d)}_t-{\sf E}\tilde S^{(d)}_t}{\sqrt{\mathrm{Var}\tilde S^{(d)}_t}}\stackrel{d}\to\eta\sim\mathcal{N}\left(0,1\right), \quad t\to\infty.
$$
\end{enumerate}
\label{th:clt}
\end{theorem}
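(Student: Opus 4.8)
The plan is to realize both $S^{(d)}_t$ and $\tilde S^{(d)}_t$ as normalized partial sums of a bounded sequence with exponentially decaying covariances, to couple that sequence with a genuinely $m$-dependent array in which the range $m=m_t$ may grow logarithmically, and then to invoke a central limit theorem for $m$-dependent triangular arrays. Keeping the notation $\zeta_t=I(X_t=X_{t+d})$, recall from~(\ref{eq:cov_zeta_above}) that $|\mathrm{cov}(\zeta_t,\zeta_{t+s})|\le(1-\varepsilon)^{s-d}$, so the $\zeta$'s are exponentially weakly dependent; the only genuine difficulties are that the sequence is not stationary and that its partial-sum variance could a priori be as small as $O(\ln t)$.

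To localize, for $m\in\mathbb{N}$ I would define $X^{(m)}_t$ by following the backward path $\pi_t$ under the same dynamics that produce $X_t$, except that if $\pi_t$ has not terminated within $m$ steps one assigns its current endpoint a fresh independent $\mathrm{Bern}(1/2)$ value. Then $X^{(m)}_t$ is a function of the noise $\{(\xi_r,\eta_r,Y_r):t-m<r\le t\}$ together with one extra independent coin, and ${\sf P}(X^{(m)}_t\neq X_t)\le{\sf P}(|\pi_t|\ge m)=(1-\varepsilon)^m$. Putting $\zeta^{(m)}_t=I(X^{(m)}_t=X^{(m)}_{t+d})$, the array $(\zeta^{(m)}_t)_t$ is $(m+d)$-dependent — indices more than $m+d$ apart use disjoint noise windows and disjoint extra coins — and ${\sf P}(\zeta^{(m)}_t\neq\zeta_t)\le 2(1-\varepsilon)^m$.

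Next I would check that the relevant partial-sum variance is $\Theta(t)$ and admits a genuine limit in both parts. In part~(1), the hypothesis $\frac{\ln t}{t^2}=o(\mathrm{Var}S^{(d)}_t)$ is equivalent to $\mathrm{Var}(\zeta_1+\ldots+\zeta_t)/\ln t\to\infty$, which by Lemma~\ref{lm:var_linear}(2) forces $\mathrm{Var}(\zeta_1+\ldots+\zeta_t)=(\sigma+o(1))t$ for some $\sigma\in(0,1)$; hence $t\,\mathrm{Var}S^{(d)}_t\to\sigma$, which already yields the claimed existence of the limit. In part~(2) linearity comes for free: the lower bound $\mathrm{Var}(\zeta_\kappa+\ldots+\zeta_{\kappa t})=\Omega(t)$ obtained from the choice~(\ref{eq:kappa_choice}) of $\kappa$, the upper bound~(\ref{eq:var_above_sublinear}), and the existence of $\lim_t\frac1t\mathrm{Var}(\zeta_\kappa+\ldots+\zeta_{\kappa t})$ from Remark~\ref{rk:tilde_S} together give $t\,\mathrm{Var}\tilde S^{(d)}_t\to\sigma'$ for some $\sigma'\in(0,\infty)$. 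Now take $m=m_t=\lceil C\ln t\rceil$ with $C=C(\varepsilon)$ so large that $t^2(1-\varepsilon)^{m_t}\to 0$; then ${\sf E}\big|\sum_{i\le t}(\zeta_i-\zeta^{(m_t)}_i)\big|\le 2t(1-\varepsilon)^{m_t}=o(\sqrt t)$ and, by a short second-moment estimate, $\mathrm{Var}\big(\sum_{i\le t}\zeta^{(m_t)}_i\big)=\mathrm{Var}(\zeta_1+\ldots+\zeta_t)+o(t)=(\sigma+o(1))t$. For the bounded $(m_t+d)$-dependent array $(\zeta^{(m_t)}_i)$ the CLT follows from the usual big-block/small-block argument: split $[1,t]$ into big blocks of length $L_t$ (with $m_t\ll L_t\ll\sqrt t$, say $L_t=\lfloor t^{1/3}\rfloor$) separated by small blocks of length $m_t+d$; the recentered big-block sums are then independent, each bounded by $L_t=o(\sqrt t)$ and of variance $O(L_t)$ while the total variance is $\Theta(t)$, so Lyapunov's condition holds and their sum is asymptotically normal, whereas the recentered small-block sum has $L^2$-norm $o(\sqrt t)$ and is negligible. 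Finally, the $\ell^1$-bound above together with $\mathrm{Var}(\zeta_1+\ldots+\zeta_t)=\Theta(t)$ let Slutsky's lemma pass from $\sum_{i\le t}\zeta^{(m_t)}_i$ to $\sum_{i\le t}\zeta_i$, i.e.\ to $\big(S^{(d)}_t-{\sf E}S^{(d)}_t\big)/\sqrt{\mathrm{Var}S^{(d)}_t}\stackrel{d}\to\mathcal{N}(0,1)$. Part~(2) is the same argument applied verbatim to the subsampled sequence $(\zeta_{\kappa i})_i$, whose covariances still decay exponentially in $|i-j|$ and whose partial-sum variance is linear with a limit, so that no case distinction is needed there.

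The genuine obstacle is the variance accounting rather than the central limit machinery: one must make sure that the hypothesis of part~(1) really does place us in the linear branch of Lemma~\ref{lm:var_linear} with a bona fide limiting constant, that this constant is unaffected by the logarithmic truncation (this is where the quantitative bound~(\ref{eq:cov_zeta_above}) and Claim~\ref{cl:var_2} do the work), and that the $m$-dependent CLT one invokes really does tolerate a range $m_t\to\infty$ at rate $O(\ln t)$. Making the coupling construction of $X^{(m)}_t$ fully rigorous and bounding the block variances uniformly in their starting point are the remaining, routine, steps.
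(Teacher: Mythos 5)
Your proposal is correct, but it proves the theorem by a genuinely different route than the paper. The paper uses the method of moments: writing $\beta_s=\zeta_s-{\sf E}\zeta_s$, it derives the decorrelation bound on products of $\zeta$'s over well-separated index blocks, counts tuples $\mathcal{I}(\ell_1,\ldots,\ell_j)$ to show that ${\sf E}(\beta_1+\ldots+\beta_t)^k$ is $(k-1)!!\left[\mathrm{Var}(\beta_1+\ldots+\beta_t)\right]^{k/2}+o(t^{k/2})$ for even $k$ and $o(t^{k/2})$ for odd $k$, and concludes by moment convergence. You instead build an explicit $(m_t+d)$-dependent approximation $\zeta^{(m_t)}_i$ with $m_t=O(\ln t)$ via truncating the backward paths, and run a Bernstein big-block/small-block argument with Lyapunov's condition, transferring back by Slutsky. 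Both approaches lean on the same two inputs from the paper — the exponential covariance decay (\ref{eq:cov_zeta_above}) and the variance dichotomy of Lemma~\ref{lm:var_linear} (resp.\ Remark~\ref{rk:tilde_S} and the $\Omega(t)$ lower bound from (\ref{eq:kappa_choice}) for $\tilde S^{(d)}$), and your reduction of the hypothesis of part~(1) to the linear branch of the dichotomy is exactly right. What your route buys is modularity: only second and third moments of block sums are needed, and the argument reduces to a standard CLT for $m$-dependent triangular arrays; the cost is the extra coupling construction and the care needed to pass from the $L^1$ bound you state to the $L^2$ bound actually required for the variance comparison and for Slutsky — though your choice $t^2(1-\varepsilon)^{m_t}\to 0$ does make $\bigl\|\sum_{i\le t}(\zeta_i-\zeta^{(m_t)}_i)\bigr\|_2=o(1)$, so this is only a presentational point. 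The paper's route avoids any coupling and treats the non-stationary sequence directly, at the price of controlling all moments. I see no gap in your argument.
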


\begin{proof} Proofs for $S=S^{(d)}$ and $\tilde S=\tilde S^{(d)}$ are identical, so we prove only the first part. The existence of $\lim\limits_{t\to\infty}t\mathrm{Var}S_t$ is proven in Lemma~\ref{lm:var_linear} (and the existence of  $\lim\limits_{t\to\infty}t\mathrm{Var}\tilde S_t$ can be proven similarly --- see Remark~\ref{rk:tilde_S} before Lemma~\ref{lm:var_linear}).  \\

It is very well-known that convergences of moments imply the convergence in distribution (in particular, in the case of the convergence to a normal random variable), see, e.g.,~\cite[Theorem 30.2]{Billingsley}. Therefore, it is sufficient to prove that
$$
{\sf E}\left(\frac{S_t-{\sf E}S_t}{\sqrt{\mathrm{Var}S_t}}\right)^k=(k-1)!!I(k\text{ is even})+o(1).
$$

Clearly, ${\sf E}\left(S_t-{\sf E}S_t\right)=0$ and ${\sf E}\left(\frac{S_t-{\sf E}S_t}{\sqrt{\mathrm{Var}S_t}}\right)^2=1$. \\

Let $k\geq 3$. Let $\beta_s=\zeta_s-{\sf E}\zeta_s$. We have $(t-d)(S_t-{\sf E}S_t)=\beta_1+\ldots+\beta_{t-d}$.


Let $\ell\in[k-1]$, $\ell_1+\ell_2=\ell$, $t^1_1<\ldots<t^1_{\ell_1}<t^2_1<\ldots<t^2_{\ell_2}$ be positive integers, $t_1^2-t^1_{\ell_1}=s+d$, $s\geq 1$. Applying the same argument as in (\ref{eq:cov_from_above}), we get that
\begin{multline*}
 {\sf E}\zeta_{t^1_1}\ldots \zeta_{t^1_{\ell_1}}\zeta_{t^2_1}\ldots\zeta_{t^2_{\ell_2}}
 \,\,=\,\,{\sf P}\left(X_{t_1^1}=X_{t_1^1+d},\ldots,X_{t_{\ell_1}^1}=X_{t_{\ell_1}^1+d}\right)\quad\quad\quad\quad\quad\quad\quad\quad\\
\quad\quad\quad\quad\quad\quad\quad\quad\quad\quad\quad \times{\sf P}\left(X_{t_1^2}=X_{t_1^2+d}\ldots,X_{t_{\ell_2}^2}=X_{t_{\ell_2}^2+d},|\pi_{t_1^2}|<s,\ldots,|\pi_{t_{\ell_2}^2}|<s\right)\\
 \,\,\,+ {\sf P}\left(\left\{X_{t_1^1}=X_{t_1^1+d},\ldots,X_{t_{\ell_1}^1}=X_{t_{\ell_1}^1+d},X_{t_1^2}=X_{t_1^2+d}\ldots,X_{t_{\ell_2}^2}=X_{t_{\ell_2}^2+d}\right\}\cap\{\exists j\,|\pi_{t_j^2}|\geq s\}\right).
\end{multline*}
Therefore,
\begin{align*}
 {\sf E}\zeta_{t^1_1}\ldots \zeta_{t^1_{\ell_1}}\left({\sf E}\zeta_{t^2_1}\ldots\zeta_{t^2_{\ell_2}}-{\sf P}\left(\exists j\,|\pi_{t_j^2}|\geq s\right)\right) \,\,
 &\leq\,\,
 {\sf E}\zeta_{t^1_1}\ldots \zeta_{t^1_{\ell_1}}\zeta_{t^2_1}\ldots\zeta_{t^2_{\ell_2}} \\
 &\leq\,\,
 {\sf E}\zeta_{t^1_1}\ldots \zeta_{t^1_{\ell_1}}{\sf E}\zeta_{t^2_1}\ldots\zeta_{t^2_{\ell_2}}+{\sf P}\left(\exists j\,|\pi_{t_j^2}|\geq s\right)
\end{align*}
implying that 
$$
 \left|{\sf E}\zeta_{t^1_1}\ldots \zeta_{t^1_{\ell_1}}\zeta_{t^2_1}\ldots\zeta_{t^2_{\ell_2}}-{\sf E}\zeta_{t^1_1}\ldots \zeta_{t^1_{\ell_1}}{\sf E}\zeta_{t^2_1}\ldots\zeta_{t^2_{\ell_2}}\right|\leq k(1-\varepsilon)^s.
$$
By the linearity of expectation, we get
\begin{equation}
 |{\sf E}\beta_{t^1_1}\ldots\beta_{t^1_{\ell_1}}\beta_{t^2_1}\ldots\beta_{t^2_{\ell_2}}-{\sf E}\beta_{t^1_1}\ldots\beta_{t^1_{\ell_1}}{\sf E}\beta_{t^2_1}\ldots\beta_{t^2_{\ell_2}}|\leq 2^{\ell}k(1-\varepsilon)^s.
\label{eq:central_moments_indep}
\end{equation}
From (\ref{eq:central_moments_indep}) and the triangle inequality, if $\ell_1+\ldots+\ell_j=\ell$, $t^1_1<\ldots<t^1_{\ell_1}<\ldots<t^j_1<\ldots<t^j_{\ell_j}$, $t_1^{i+1}-t^i_{\ell_i}=s_i+d$, then
\begin{equation}
\left |{\sf E}\prod_{i=1}^j\beta_{t^i_1}\ldots\beta_{t^i_{\ell_i}}-\prod_{i=1}^j{\sf E}\beta_{t^i_1}\ldots\beta_{t^i_{\ell_i}}\right|
\leq 
2^{\ell}k\sum_{i=1}^j(1-\varepsilon)^{s_i}\leq 2^{k}k^2(1-\varepsilon)^{\min s_i}.
\label{eq:central_moments_indep_multy}
\end{equation}

Choose $s_0=\Theta(\log t)$ in a way so that $2^k k^2 (1-\varepsilon)^{s_0}<t^{-k}$. 

If at least one element of a tuple $(t_1,\ldots,t_k)\in[t]^k$ (say, $t_1$) is at distance at least $d+s_0$ of the closest among the other elements of this $k$-tuple, then (\ref{eq:central_moments_indep_multy}) and the equality ${\sf E}\beta_{t_1}=0$ imply that $|{\sf E}\beta_{t_1}\ldots \beta_{t_k}|<t^{-k}$. 

For $\ell_1,\ldots,\ell_j\geq 2$ such that $\ell_1+\ldots+\ell_j=k$, let $\mathcal{I}(\ell_1,\ldots,\ell_j)$ be the set of tuples $(t_1,\ldots,t_k)\in[t]^k$ such that $\ell_1,\ldots,\ell_j$ are exactly the lengths of inclusion-maximum tuples $t_{j_1}\leq\ldots \leq t_{j_{\ell}}$ in $(t_1,\ldots,t_k)$ such that all the distances between neighbors in these tuples are at most $d+s_0-1$. Note that 
$$
|\mathcal{I}(\ell_1,\ldots,\ell_j)|\leq k!\prod_{h=1}^j\left[t(d+s_0)^{\ell_j-1}\right]=k!t^j(d+s_0)^{k-j}.
$$

If $k$ is odd, then, since all $\ell_i$ are at least 2, and $\ell_1+\ldots+\ell_j=k$, we get that at least one $\ell_i$ is at least $3$, and so $j\leq\frac{k-1}{2}$. Therefore, since $s_0=\Theta(\log t)$, we get that
$$
{\sf E}(\beta_1+\ldots+\beta_t)^k
\leq\sum_{\ell_1,\ldots,\ell_j}|\mathcal{I}(\ell_1,\ldots,\ell_j)|+O(1)
=o(t^{(k-1)/2}(\log t)^k).
$$

Now assume that $k\geq 4$ is even. Since $|\mathrm{cov}(\beta_{t_1},\beta_{t_2})|\leq 1$ for all $t_1,t_2\in[t]$, we get from (\ref{eq:central_moments_indep_multy}) that
\begin{align*}
 {\sf E}(\beta_1+\ldots+\beta_t)^k\,\,
 &=\,\,\sum_{(t_1,\ldots,t_k)\in\mathcal{I}(2,\ldots,2)}{\sf E}\beta_{t_1}\ldots\beta_{t_k}+o(t^{k/2})\\
 &=\,\,\frac{k!}{2^{k/2}(k/2)!}\sum_{(t_1,t_2),\ldots,(t_{k-1},t_k)\in[t]^2}
 \prod_{i=1}^{k/2}\mathrm{cov}(\beta_{t_{2i-1}},\beta_{t_{2i}})+o(t^{k/2})\\
 &=\,\,(k-1)!!\left[\mathrm{Var}(\beta_1+\ldots+\beta_t)\right]^{k/2}+o(t^{k/2}).
\end{align*}

Finally, since $\mathrm{Var}(\beta_1+\ldots+\beta_t)=\Theta(t)$, we get that, for odd $k$,
$$
{\sf E}\left(\frac{S_t-{\sf E}S_t}{\sqrt{\mathrm{Var}S_t}}\right)^k=
\frac{{\sf E}\left(\beta_1+\ldots+\beta_{t-d}\right)^k}{(\mathrm{Var}\left(\beta_1+\ldots+\beta_{t-d}\right))^{k/2}}=o(t^{-1/2}(\log t)^k),
$$
and, for even $k$,
$$
{\sf E}\left(\frac{S_t-{\sf E}S_t}{\sqrt{\mathrm{Var}S_t}}\right)^k=
\frac{{\sf E}\left(\beta_1+\ldots+\beta_{t-d}\right)^k}{(\mathrm{Var}\left(\beta_1+\ldots+\beta_{t-d}\right))^{k/2}}=(k-1)!!+o(1).
$$
\end{proof}

\subsection{Other statistics}
\label{sc:other}

In Section~\ref{sc:bip}, we show that there are arbitrary large integers $n_1>n_2>m_2>m_1$ such that, for $(G_1=K_{m_1,n_1},u_1)$ and $(G_2=K_{m_2,n_2},u_2)$, where $u_1$ belongs to the part of size $n_1$ of $G_1$ and $u_2$ belongs to the part of size $n_2$ of $G_2$, $p_d(G_1,u_1)=p_d(G_2,u_2)$ for all $d\in\mathbb{N}$, and so Theorem~\ref{th:NVM_main} is not applicable. To show that these rooted graphs are in fact NVM-distinguishable, we introduce here another statistic, that counts 4-tuples with equal opinions: let, for a rooted graph $(G,u)$ and positive integers $d_1<d_2<d_3$,
$$
S_t^{(d_1,d_2,d_3)}:=\frac{1}{t-d_3}\sum_{i=1}^{t-d_3}I(X_i=X_{i+d_1}=X_{i+d_2}=X_{i+d_3}).
$$
Let $p_{t;d_1,d_2}$ and $p_{t;d_1,d_2,d_3}$ be the probability that $\pi_t$ meets both $\pi_{t+d_1}$ and $\pi_{t+d_2}$, and the probability that $\pi_t$ meets all $\pi_{t+d_1}$, $\pi_{t+d_2}$, $\pi_{t+d_3}$ respectively. Also, let $\tilde p_{t;d_1,d_2,d_3}$ be the probability that $\pi_t$ meets $\pi_{t+d_1}$, $\pi_{t+d_2}$ meets $\pi_{t+d_3}$, but $\pi_{t+d_1}$ does not meet $\pi_{t+d_2}$. Set 
$$
p_{d_1,d_2}=\lim_{t\to\infty}p_{t;d_1,d_2},\quad 
p_{d_1,d_2,d_3}=\lim_{t\to\infty}p_{t;d_1,d_2,d_3},\quad 
\tilde p_{d_1,d_2,d_3}=\lim_{t\to\infty}\tilde p_{t;d_1,d_2,d_3}.
$$

In the same way as in (\ref{eq:prob_duplicate}), we get
\begin{multline*}
 {\sf P}(X_t=X_{t+d_1}= X_{t+d_2}=X_{t+d_3})\,\,=\\ 
 =\,\,p_{t;d_1,d_2,d_3}+\frac{1}{2}(p_{t;d_1,d_2}+p_{t;d_1,d_3}+p_{t;d_2,d_3}+p_{t+d_1;d_2-d_1,d_3-d_1}-4p_{t;d_1,d_2,d_3})\quad\quad\\
 +\frac{1}{2}(\tilde p_{t;d_1,d_2,d_3}+\tilde p_{t;d_2,d_1,d_3}+\tilde p_{t;d_3,d_1,d_2})
 \quad\quad\quad\quad\quad\quad\quad\,\,\,\\
 \quad\quad\quad\quad\quad\quad\,\,\,\,\,\,
 +\frac{1}{4}\biggl[\sum_{i=1}^3\biggl(p_{t;d_i}-\sum_{j\neq i}\left(p_{t;d_i,d_j}-p_{t;d_1,d_2,d_3}\right)-p_{t;d_1,d_2,d_3}-\tilde p_{t;d_i,d_{j_1\neq i},d_{j_2\neq i,j_1}}\biggr)\\
\quad\quad\quad\quad\quad\quad\quad\quad +(p_{t+d_1;d_2-d_1}+\ldots)+\ldots\biggr]\\
 \quad\quad\quad\quad\quad\,\,\,+\frac{1}{8}\biggl(1-p_{t;d_1}-p_{t;d_2}-p_{t;d_3}-p_{t+d_1;d_2-d_1}-p_{t+d_1;d_3-d_1}-p_{t+d_2;d_3-d_2}\\
 \quad\quad\quad\quad\quad\quad\quad\quad\quad\quad\quad\quad\quad
 +2(p_{t;d_1,d_2}+p_{t;d_1,d_3}+p_{t;d_2,d_3}+p_{t+d_1;d_2-d_1,d_3-d_1})\\
 \quad\quad\quad\quad\quad\quad\quad\quad\quad\quad\quad\quad\quad
 + \tilde p_{t;d_1,d_2,d_3}+\tilde p_{t;d_2,d_1,d_3}+\tilde p_{t;d_3,d_1,d_2}-3p_{t;d_1,d_2,d_3}\biggr)\\
 =\,\,\frac{1}{8}\biggl(1+p_{t;d_1}+p_{t;d_2}+p_{t;d_3}+p_{t+d_1;d_2-d_1}+p_{t+d_1;d_3-d_1}+p_{t+d_2;d_3-d_2}\quad\quad\quad\quad\,\,\,\\
 +\tilde p_{t;d_1,d_2,d_3}+\tilde p_{t;d_2,d_1,d_3}+\tilde p_{t;d_3,d_1,d_2}+p_{t;d_1,d_2,d_3}\biggr).
\end{multline*}
Set 
\begin{align*}
 q_{d_1,d_2,d_3}\,\,
 &=\,\,\lim_{t\to\infty}{\sf P}(X_t=X_{t+d_1}= X_{t+d_2}=X_{t+d_3})\\
 &=\,\, \frac{1}{8}\biggl(1+p_{d_1}+p_{d_2}+p_{d_3}+p_{d_2-d_1}+p_{d_3-d_1}+p_{d_3-d_2}\\ 
&\quad\quad\quad\quad
+\tilde p_{d_1,d_2,d_3}+\tilde p_{d_2,d_1,d_3}+\tilde p_{d_3,d_1,d_2}+p_{d_1,d_2,d_3}\biggr).
\end{align*}

The proof of the following theorem copies the proof of Theorem~\ref{th:NVM_main}. So, we omit it.\\

\begin{theorem}
If $(G_1,u_1)$, $(G_2,u_2)$ are rooted graphs such that, for some positive integers $d_1<d_2<d_3$,
$$
q_{d_1,d_2,d_3}(G_1,u_1)\neq q_{d_1,d_2,d_3}(G_2,u_2),
$$
then $(G_1,u_1)$ and $(G_2,u_2)$ are distinguishable by  $S^{(d_1,d_2,d_3)}$.\\
\label{th:NVM_aux}
\end{theorem}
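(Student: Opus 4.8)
The plan is to run the argument exactly parallel to the proof of Theorem~\ref{th:NVM_main}, replacing the two-point indicator by the four-fold indicator $\zeta_t:=I(X_t=X_{t+d_1}=X_{t+d_2}=X_{t+d_3})$ and replacing Lemma~\ref{lm:lll} by its analogue $S^{(d_1,d_2,d_3)}_t\stackrel{{\sf P}}{\to}q_{d_1,d_2,d_3}$; the distinguishability conclusion then follows formally. First I would record that $q_{d_1,d_2,d_3}=q_{d_1,d_2,d_3}(G,u)$ is well defined: in the displayed identity for ${\sf P}(X_t=X_{t+d_1}=X_{t+d_2}=X_{t+d_3})$ preceding the statement, each term $p_{t;\cdot}$ is the probability of an intersection of ``meeting'' events, which are monotone non-decreasing in $t$ under the natural coupling (the backward path issued at time $t+1$ extends the one issued at time $t$), while each $\tilde p_{t;\cdot}$ is a difference of two such probabilities; hence ${\sf P}(X_t=X_{t+d_1}=X_{t+d_2}=X_{t+d_3})$ converges, i.e. ${\sf E}\zeta_t\to q_{d_1,d_2,d_3}$, and therefore ${\sf E}S^{(d_1,d_2,d_3)}_t=\frac{1}{t-d_3}\sum_{s=1}^{t-d_3}{\sf E}\zeta_s\to q_{d_1,d_2,d_3}$.

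The substance is the variance bound, and here I would repeat the mechanism of (\ref{eq:cov_from_above})--(\ref{eq:cov_zeta_above}). Fix $i<j$ and write $j=i+s+d_3$ with $s\ge 1$. The indicator $\zeta_i$ is measurable with respect to the $\sigma$-field $\mathcal F_{\le i+d_3}$ generated by $X_0$ together with all $\xi_r(\cdot),\eta_r(\cdot),Y_r(\cdot)$ with $r\le i+d_3$. Let $G$ be the event that each of the four backward paths $\pi_j,\pi_{j+d_1},\pi_{j+d_2},\pi_{j+d_3}$ terminates (by a fresh opinion) strictly above time $i+d_3$; a union bound over the four geometric path-length tails gives ${\sf P}(G^c)\le (1-\varepsilon)^s+(1-\varepsilon)^{s+d_1}+(1-\varepsilon)^{s+d_2}+(1-\varepsilon)^{s+d_3}\le 4(1-\varepsilon)^s$. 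On $G$ the event $G$ itself and each of $X_j,X_{j+d_1},X_{j+d_2},X_{j+d_3}$ depend only on the randomness at times $>i+d_3$, so $\zeta_j\cdot I(G)$ is $\mathcal F_{>i+d_3}$-measurable and hence independent of $\zeta_i$. Splitting ${\sf E}\zeta_i\zeta_j={\sf E}\zeta_i\zeta_j I(G)+{\sf E}\zeta_i\zeta_j I(G^c)$ and using this independence on the first term yields $|\mathrm{cov}(\zeta_i,\zeta_j)|\le 2{\sf P}(G^c)\le 8(1-\varepsilon)^{j-i-d_3}$. Then, just as in (\ref{eq:var_above_sublinear}), $\mathrm{Var}(\zeta_1+\ldots+\zeta_t)\le (2d_3+1)t+16t\sum_{s\ge 1}(1-\varepsilon)^s=O(t)$, so $\mathrm{Var}S^{(d_1,d_2,d_3)}_t=O(1/t)\to 0$; combining this with the convergence of the expectation and Chebyshev's inequality as in (\ref{eq:Cheb}) gives $S^{(d_1,d_2,d_3)}_t\stackrel{{\sf P}}{\to}q_{d_1,d_2,d_3}$.

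Finally, to conclude distinguishability I would set $S_t:=S^{(d_1,d_2,d_3)}_t$, put $q_\ell:=q_{d_1,d_2,d_3}(G_\ell,u_\ell)$ and, using $q_1\ne q_2$, take $r:=|q_1-q_2|/3>0$ and $\mathcal S_\ell(t):=(q_\ell-r,q_\ell+r)$. These two intervals are disjoint and fixed; by the convergence in probability just established, ${\sf P}[S_t(X_i(G_1,u_1),\,i\le t)\in\mathcal S_1(t)]\to 1$ and likewise for $(G_2,u_2)$, and since the two processes are independent the probability of the intersection tends to $1$, which is precisely distinguishability by $S^{(d_1,d_2,d_3)}$.

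The only step needing care is the covariance bound: one must check that confining the four backward paths $\pi_j,\pi_{j+d_1},\pi_{j+d_2},\pi_{j+d_3}$ to stay above time $i+d_3$ genuinely makes both the late indicator and the confining event functions of the post-$(i+d_3)$ randomness alone (so they decouple from $\zeta_i$), and that $4(1-\varepsilon)^s$ is an honest union bound for the failure of the four length conditions. This is the same phenomenon exploited for the two-point statistic in (\ref{eq:cov_from_above}) and (\ref{eq:cov_zeta_above}), now with four paths instead of two, so I expect no genuine difficulty — which is presumably why the authors simply note that the proof ``copies'' that of Theorem~\ref{th:NVM_main}.
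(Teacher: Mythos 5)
Your proof is correct and follows the route the paper intends: the paper omits the proof of this theorem, saying only that it copies the proof of Theorem~\ref{th:NVM_main}, and your argument is exactly that adaptation (Ces\`aro convergence of ${\sf E}\zeta_t$ to $q_{d_1,d_2,d_3}$, exponential covariance decay obtained by truncating the backward paths, then Chebyshev and disjoint limiting intervals). The only cosmetic deviation is in the decoupling event: you require all four paths $\pi_j,\pi_{j+d_1},\pi_{j+d_2},\pi_{j+d_3}$ to terminate above the cut time $i+d_3$ and pay a union bound $4(1-\varepsilon)^s$, whereas the template of (\ref{eq:cov_from_above})--(\ref{eq:cov_zeta_above}) conditions on the length of a single path; both give the needed $O((1-\varepsilon)^{j-i-d_3})$ bound on $|\mathrm{cov}(\zeta_i,\zeta_j)|$ and hence $\mathrm{Var}\,S^{(d_1,d_2,d_3)}_t=O(1/t)$.
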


\begin{remark} Note that 3-tuples of time stamps are not sufficient since
\begin{align*}
{\sf P}(X_t=X_{t+d_1}=X_{t+d_2}) \,\, 
 & = \,\, p_{t;d_1,d_2} + 
\frac{1}{2}(p_{t;d_1}+p_{t;d_2}+p_{t+d_1;d_2-d_1}-3p_{t;d_1,d_2})\\
 & \quad\quad\quad\quad\,\,
+\frac{1}{4}(1-p_{t;d_1}-p_{t;d_2}-p_{t+d_1;d_2-d_1}+2p_{t;d_1,d_2})\\
 & = \,\, \frac{1}{4}(1+p_{t;d_1}+p_{t;d_2}+p_{t+d_1;d_2-d_1})
\end{align*}
does not depend on $p_{t;d_1,d_2}$.
\end{remark}

\section{Random graphs}
\label{sc:random}

As we mention in Introduction, we conjecture that the NVM distinguishes between any two connected graphs other than stars. In this section we prove Theorem~\ref{th:r_g}, i.e. we confirm the conjecture for asymptotically almost all pairs of graphs and almost all $\varepsilon$.

\subsection{Proof of Theorem~\ref{th:r_g}}

Let $\beta=1-\varepsilon.$ First, let us note that, for an arbitrary rooted graph $(G,u)$, the probability $p_2=p_2(G,u)$ equals $\sum_{j=2}^{\infty}a_j(G,u)\beta^j$, where $a_j(G,u)\in[0,1]$ is the limit probability (as $t\to\infty$) that $\pi_t(G,u)$ meets $\pi_{t+2}(G,u)$ for the first time at a vertex which is at distance exactly $\frac{j-2}{2}$ from $(u,t)$ in $\mathcal{D}_G$. The latter function $p_2=p_2(\beta)$ is analytic on $(-1,1)$. Indeed, all $a_j(G,u)$ are non-negative and at most 1. Therefore, for any compact set $K\subset(-1,1)$, any $\beta\in K$, and any $k\in\mathbb{N}$, we have that 
$$
\left|\frac{\partial^k}{\partial\beta^k}p_2\right|\leq\sum_{j=k}^{\infty}{j\choose k}k!|\beta|^{j-k}\leq\left.\frac{\partial^k}{\partial x^k}\left(\frac{1}{1-x}\right)\right|_{x=|\beta|}=\frac{(k-1)!}{(1-|\beta|)^k}\leq(k-1)!C^k,
$$
where $C=\frac{1}{1-\sup |K|}$.  Therefore, for two rooted graphs $(G_1,u_1)$ and $(G_2,u_2)$, either $p_2(G_1,u_1)=p_2(G_2,u_2)$ for all $\beta\in(0,1)$ (equivalently, $a_j(G_1,u_1)=a_j(G_2,u_2)$ for all $j$), or the zero set of $p_2(G_1,u_1)-p_2(G_2,u_2)$ has a zero measure (see, e.g.~\cite{Mityagin}). Therefore, due to Theorem~\ref{th:NVM_main}, it is sufficient to prove that whp, for any $u_1,u_2\in[n]$, $a_2(G_n^1,u_1)\neq a_2(G_n^2,u_2)$. 

Recall that 
$$
a_2(G,u)={\sf P}(\pi_{t+2}\text{ meets }\pi_t\text{ for the first time at }(v,t)\text{ for some }v),
$$ 
and this $v$ may only coincide with $u$ since $(u,t)\in\pi_t$. Thus, $a_2(G,u)$ is exactly the limit (as $t\to \infty)$ of the probability that $\pi_{t+2}$ at time $t$ returns to $u$. Therefore, 
$$
 a_2(G,u)=\frac{1}{\mathrm{deg}_G u}\sum_{v\in N_G(u)}\frac{1}{\mathrm{deg}_G v}.
$$
It remains to prove that whp there are no $u_1,u_2\in[n]$ such that 
\begin{equation}
\frac{1}{\mathrm{deg}_{G_n^1} u_1}\sum_{v\in N_{G_n^1}(u_1)}\frac{1}{\mathrm{deg}_{G_n^1} v}=\frac{1}{\mathrm{deg}_{G_n^2} u_2}\sum_{v\in N_{G_n^2}(u_2)}\frac{1}{\mathrm{deg}_{G_n^2} v}.
\label{eq:inverse_degrees_equal}
\end{equation}

Our proof strategy is as follows. Fix vertices $u_1,u_2$ of $G_n^1,G_n^2$ respectively. 
 We find a 
prime number $p$ 
 such that 
\begin{itemize}
\item $d_1:=\mathrm{deg}_{G_n^1}u_1$, $d_2:=\mathrm{deg}_{G_n^2}u_2$ are not divisible by $p$;
\item there is a unique element $d^*$ of the set $\{\mathrm{deg}_{G_n^1} v,\,v\in N_{G_n^1}(u_1)\}$ divisible by $p$;
\item any element of $\{\mathrm{deg}_{G_n^2} v,\,v\in N_{G_n^2}(u_2)\}$ 
 other than $d^*$ is not divisible by $p$;
\item $\mu_1d_2-\mu_2d_1$ is not divisible by $p$, where $\mu_j$ is the number of vertices $v\in N_{G_n^j}(u_j)$ such that $\mathrm{deg}_{G_n^j} v=d^*$.
\end{itemize}
If this is really possible, then, for some integer numbers $x_j,y_j$, $j\in\{1,2\}$, non-divisible by $p$, and for $j\in\{1,2\}$,
$$
\frac{1}{\mathrm{deg}_{G_n^j} u_j}\sum_{v\in N_{G_n^j}(u_j)}\frac{1}{\mathrm{deg}_{G_n^j} v}=\frac{1}{pd_j}\frac{\mu_j}{d^*/p}+\frac{x_j}{y_j}.
$$
Then (\ref{eq:inverse_degrees_equal}) implies that $\mu_1d_2-\mu_2d_1$ is divisible by $p$ --- a contradiction. Lemma~\ref{cl:degrees_existence} stated below implies that whp, for every $u_1,u_2\in[n]$, the desired $p$ exists. \\

Let $G_n$ be a uniformly distributed random graph on $[n]$.
Let $\delta>0$ be small enough. By the prime number theorem, for $n$ large enough, the set $\mathcal{P}=\mathcal{P}(n)$ of all prime numbers in $\left[(\sqrt{2}-2\delta)\sqrt{n\ln n},(\sqrt{2}-\delta)\sqrt{n\ln n}\right]$ that do not divide $n$ has cardinality $\Theta\left(\sqrt{\frac{n}{\ln n}}\right)$ (there is at most 1 prime number in this interval that divides $n$). For each $p\in\mathcal{P}$, find the unique $d^*=d^*(p)\in\left[\frac{n}{2}-\frac{p}{2},\frac{n}{2}+\frac{p}{2}\right]$ divisible by $p$.\\

\begin{lemma}
Let $C$ be large enough, $k=\lfloor C\ln n\rfloor$. Fix distinct $p_1,\ldots,p_k\in\mathcal{P}$ and positive integers $\mu_1,\ldots,\mu_k$. Then whp
\begin{enumerate}
\item for every $u\in[n]$, 
there exists at least $\ln^2 n$ numbers $p\in\mathcal{P}$ such that $d^*(p)$ is a unique element of $\{\mathrm{deg}_{G_n}v,\,v\in N_{G_n}(u)\}$ divisible by $p$;
\item 
for every $u\in[n]$, there exists $i\in[k]$ such that 
\begin{itemize}
\item the number of vertices in $N_{G_n}(u)$ with degree $d^*(p_i)$ does not equal to $\mu_i$,
\item there is no vertex in $N_{G_n}(u)$ with a degree $d\neq d^*(p_i)$ divisible by $p_i$;
\end{itemize}
\item 
for every integer $d$ the number of vertices $u\in[n]$ of degree $d$ is less than $2\sqrt{n}$.
\end{enumerate}
\label{cl:degrees_existence}
\end{lemma}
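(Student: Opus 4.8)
The plan is to prove all three parts by first-moment (union bound) estimates, leveraging the fact that in $G(n,1/2)$ the degree of every vertex is $\frac{n}{2}+\Theta(\sqrt{n\ln n})$ whp, and — crucially — that degrees of distinct vertices behave almost independently. The key probabilistic input is the following: for a fixed vertex $v$ and a fixed integer $d$ in the ``typical'' window $\left[\frac{n}{2}-C'\sqrt{n\ln n},\frac{n}{2}+C'\sqrt{n\ln n}\right]$, one has $\mathsf{P}[\mathrm{deg}_{G_n}v=d]=\Theta\!\left(\frac{1}{\sqrt{n}}\right)$ by a local central limit theorem for the binomial $\mathrm{Bin}(n-1,1/2)$; and for two distinct vertices $v,w$, conditioning on the edge between them, $\mathrm{deg}\,v$ and $\mathrm{deg}\,w$ are independent binomials on disjoint coordinate sets, so $\mathsf{P}[\mathrm{deg}\,v=d,\mathrm{deg}\,w=d']=\Theta(1/n)$. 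More generally a set of $r$ degrees, once we condition on the (at most $\binom{r}{2}$) edges among those $r$ vertices and on the edge-status to the root $u$, becomes a vector of independent binomials, so the joint probability of prescribing all $r$ degree values is $\Theta(n^{-r/2})$.

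For part~(3): the expected number of vertices of a fixed degree $d$ is $n\cdot\Theta(n^{-1/2})=\Theta(\sqrt{n})$, and the number of such vertices is a sum of indicators that are ``almost independent'' (pairwise correlation $O(1/n^{3/2})$ relative to the product of marginals — same conditioning trick), so by Chebyshev the count is concentrated around its mean; a union bound over the $O(\sqrt{n\ln n})$ relevant values of $d$ (degrees outside the typical window occur with superpolynomially small probability) gives that whp no degree is attained by $\ge 2\sqrt n$ vertices. For part~(1), fix $u$ and $p\in\mathcal P$; since $p=\Theta(\sqrt{n\ln n})$, there is exactly one multiple $d^*(p)$ of $p$ in the typical degree window. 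The event ``exactly one neighbor of $u$ has degree $d^*(p)$, and no neighbor has any other degree divisible by $p$'' has probability bounded below by a constant: conditionally on $\mathrm{deg}\,u=d\approx n/2$, pick one neighbor to have degree $d^*(p)$ (probability $\Theta(1/\sqrt n)$, and there are $\Theta(n)$ neighbors, so $\Theta(\sqrt n)$ in expectation — one needs the mean $\ge 1$ and a second-moment lower bound to get a constant probability of ``$\ge 1$''), while the probability that some neighbor has a degree in the other $O(\sqrt{n\ln n}/p)=O(1)$ multiples of $p$ in the window is $O(n\cdot\sqrt n^{-1})=O(\sqrt n)\cdot$(number of bad values)$/n$... — here one must be careful: the expected number of neighbors with degree in any one residue-free-of-$d^*$ multiple of $p$ is $\Theta(\sqrt n)$ too, so instead we show that with constant probability the neighbor-degree-sequence avoids all multiples of $p$ except hitting $d^*(p)$ exactly once, by a Poisson-approximation / Chen–Stein argument on the $\Theta(n)$ weakly dependent indicators $\{\mathrm{deg}\,v\equiv 0 \bmod p\}_{v\in N(u)}$, whose total mean is $\Theta(\sqrt n \cdot \sqrt n/n)=\Theta(1)$ per residue class. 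Summing over $p\in\mathcal P$ (there are $\Theta(\sqrt{n/\ln n})$ of them) and using near-independence across different primes $p$ (different $p$ cut the degree line into incommensurable progressions), the number of ``good'' primes for a fixed $u$ is $\Theta(|\mathcal P|)=\Theta(\sqrt{n/\ln n})\gg\ln^2 n$ with probability $1-\exp(-\Omega(\sqrt{n/\ln n}))$; a union bound over the $n$ choices of $u$ finishes part~(1). Part~(2) is then almost immediate: for a fixed $u$, among the $k=\lfloor C\ln n\rfloor$ fixed primes $p_1,\dots,p_k$, for each $i$ the event ``$|\{v\in N(u):\mathrm{deg}\,v=d^*(p_i)\}|=\mu_i$ or some neighbor has another degree divisible by $p_i$'' has probability bounded \emph{away from $1$} by a constant $1-c$ (this is essentially the complement of the part~(1) event for a specific multiplicity, and $\mu_i$ can equal the realized count with probability at most $\max_m\mathsf P[\mathrm{Bin}\approx]=O(1)<1$ uniformly), and these $k$ events are again near-independent across $i$; hence the probability that \emph{all} $k$ fail is at most $(1-c)^{k}+o(n^{-2})=n^{-\Omega(C)}$, which is $o(1/n)$ for $C$ large, and a union bound over $u\in[n]$ closes it.

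The main obstacle I anticipate is making the ``near-independence'' of degree-based events rigorous and \emph{quantitative enough} for the union bounds — in particular for part~(1), where I need a lower bound that holds simultaneously for $\Theta(\sqrt{n/\ln n})$ primes with failure probability $\exp(-\Omega(\sqrt{n/\ln n}))$, small enough to survive the union over $n$ vertices. The clean way to organize this is: (i) a local CLT for $\mathrm{Bin}(n-1,1/2)$ giving the $\Theta(1/\sqrt n)$ point probabilities uniformly over the window; (ii) the edge-conditioning observation that turns any finite family of vertex degrees into independent binomials on disjoint ground sets, yielding joint point-probability bounds; (iii) a Chen–Stein / Poisson-approximation bound for the number of neighbors of $u$ whose degree lies in a fixed arithmetic progression mod $p$, with the dependency graph controlled by shared neighborhoods; and (iv) a concentration step (bounded-differences on the edge-indicators, or a second-moment bound) to boost ``constant probability per prime'' to ``exponentially-concentrated count over all primes''. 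Everything else — the prime number theorem count of $|\mathcal P|$, the location of $d^*(p)$, the truncation of atypical degrees — is routine.
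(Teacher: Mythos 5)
There are two genuine gaps here, one quantitative and one structural. First, your basic point-probability estimate is wrong in a way that changes the whole picture: you assert $\mathsf{P}[\mathrm{deg}_{G_n}v=d]=\Theta(1/\sqrt n)$ uniformly over the typical window, but the paper deliberately places $d^*(p)$ at distance $\Theta(\sqrt{n\ln n})$ from $n/2$, i.e.\ $\Theta(\sqrt{\ln n})$ standard deviations into the tail, where the local CLT gives $\mathsf{P}[\mathrm{deg}\,v=d^*(p)]=\Theta(n^{-3/2+O(\delta)})$, not $\Theta(n^{-1/2})$. Consequently the expected number of neighbors of $u$ with degree $d^*(p)$ is $\Theta(n^{-1/2+O(\delta)})=o(1)$ per prime, not $\Theta(1)$ per residue class as in your Chen--Stein setup; the number of ``good'' primes is $\Theta(n^{\delta}\sqrt{\ln n})$, not $\Theta(|\mathcal P|)$; and the uniqueness of the divisible degree comes essentially for free from a first-moment bound on hitting both of the (at most two) multiples $d^*(p),d'(p)$ of $p$ in the typical window, rather than from a Poisson approximation. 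Your claimed failure probability $\exp(-\Omega(\sqrt{n/\ln n}))$ for part~(1) is neither attainable in this regime nor needed ($o(1/n)$ per vertex suffices).

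Second, and more fundamentally, the concentration machinery you propose does not close the union bounds. For part~(3), Chebyshev with pairwise almost-independence gives a failure probability of order $n^{-1/2}$ per degree value $d$, which does not survive the union over the $\Theta(\sqrt{n\ln n})$ relevant values of $d$; and bounded differences on the $\binom n2$ edge indicators gives $\exp(-\Theta(t^2/n^2))$ with $t=\Theta(\sqrt n)$, which is vacuous. Your edge-conditioning trick does make any \emph{fixed finite} collection of degrees independent, but parts~(1) and~(3) require exponential concentration of counts over \emph{all} $n$ degrees simultaneously. The paper resolves exactly this by invoking the McKay--Wormald theorem, which replaces the entire degree sequence by independent binomials (conditioned on an even sum) up to an additive $o(n^{-\alpha})$ error, after which Chernoff bounds legitimately give $o(1/n)$ and $e^{-\Omega(\sqrt n)}$ failure probabilities. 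Without this (or an equivalent decoupling of the whole degree sequence), the quantitative backbone of your argument --- ``constant probability per prime boosted to exponentially concentrated counts'' --- is missing. Your sketch of part~(2) is closest in spirit to the paper's, but it too glosses over the joint distribution of the multiplicities of $d^*(p_1),\ldots,d^*(p_k)$ among the same $\nu$ neighbors, which the paper controls with an explicit multinomial bound of the form $(1/\sqrt{2\pi})^{k-1}=n^{-\Omega(C)}$.
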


Let us show that Lemma~\ref{cl:degrees_existence} indeed implies the existence of an appropriate prime $p$ and thus completes the proof of Theorem~\ref{th:r_g}. 
 The first part of the lemma allows to find $\ln^2 n$ prime numbers $p\in\mathcal{P}$ such that $d^*(p)$ is a unique element of $\{\mathrm{deg}_{G_n^1}v,\,v\in N_{G_n^1}(u_1)\}$ divisible by $p$. Let $\mathcal{P}^*$ be those of these selected prime numbers that are not divisors of $d_1$ and $d_2$ (there are at most 2 since the considered prime numbers are larger than $\sqrt{n}$). Take distinct $p_1,\ldots,p_{k}\in\mathcal{P}^*$. For $i\in[k]$, let $\mu_1(i)\geq 1$ be the number of vertices $v\in N_{G_n^1}(u_1)$ with $\mathrm{deg}_{G_n^1}v=d^*(p_i)$. Let $\tilde\mu_2(i)$ be the minimum positive integer such that $\mu_1(i) d_2-\tilde\mu_2(i) d_1$ is divisible by $p_i$ (such a number exists since $d_1$ is not divisible by $p_i$). By the second part of the lemma we can guarantee the existence of $i\in[k]$ such that the number of vertices in $N_{G_n^2}(u_2)$ (denoted by $\mu_2(i)$) with degree $d^*(p_i)$ does not equal to $\tilde\mu_2(i)$, and for any other $d$ divisible by $p_i$ there is no vertex in $N_{G_n^2}(u_2)$ with degree $d$.

Let us show that $p=p_i$ is the desired prime number. The first three points are immediate. It remains to prove that $\mu_1(i)d_2-\mu_2(i)d_1$ is not divisible by $p$. From the third part of the lemma it follows that we may assume that $\mu_2(i)<2\sqrt{n}$. Since $p$ is a prime number, and $d_1$ is not divisible by $p$, we get that there is at most one number $\mu<2\sqrt{n}$ such that $\mu_1(i)d_2-\mu d_1$ is divisible by $p$, that immediately implies the desired assertion. This finishes the proof of Theorem~\ref{th:r_g}.\\

We prove Lemma~\ref{cl:degrees_existence} in Section~\ref{sc:lm_degrees_proof}. We will use an approximation of the degree sequence of the random graph by independent random variables given in the next section.

\subsection{Almost independence of the degree sequence}

We need the following result of McKay and Wormald~\cite{MW} stating that the degrees of $G_n$ are almost independent.\\

\begin{theorem}[McKay, Wormald~\cite{MW}]
Let $\xi$ be a normal random variable with mean $1/2$ and variance $1/(4n(n-1)),$ truncated to $(0,1)$. For each $x\in(0,1)$, let $\eta_1(x),\ldots,\eta_n(x)$ be independent binomial random variables with parameters $n-1$ and $x$, and let $\eta_1(x),\ldots,\eta_n(x)$ be independent of $\xi$. Let $\mathbf{d}_1,\ldots,\mathbf{d}_n$ be the degrees of vertices $1,\ldots,n$ in $G_n$ respectively. Fix a real $\alpha>0$ and a set $A_n\subset\{0,1,\ldots,n-1\}^n$ for each $n$. Then
$$
 {\sf P}[(\mathbf{d}_1,\ldots,\mathbf{d}_n)\in A_n]=(1+o(1)){\sf P}\left[(\eta_1(\xi),\ldots,\eta_n(\xi))\in A_n\left|\sum_{i=1}^n\eta_i(\xi)\text{ is even}\right.\right]+o(n^{-\alpha}).
$$
\label{thm:Wormald_McKay}
\end{theorem}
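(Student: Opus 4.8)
The plan is to reduce the statement to a \emph{uniform pointwise} comparison of point probabilities over a ``bulk'' of degree sequences, together with crude tail bounds outside the bulk. Fix $\alpha>0$ and let $\mathcal{G}_n$ be the set of $\mathbf{d}=(d_1,\dots,d_n)$ with $\sum_i d_i$ even and $|d_i-(n-1)/2|\le c\sqrt{n\ln n}$ for all $i$, where $c=c(\alpha)$ is chosen large. Each $\mathbf{d}_i$ is $\mathrm{Bin}(n-1,\tfrac12)$, so a Chernoff bound and a union bound give $\mathsf{P}[(\mathbf{d}_1,\dots,\mathbf{d}_n)\notin\mathcal{G}_n]=o(n^{-\alpha})$; since $\xi$ lies in $\tfrac12\pm\tfrac{\ln n}{n}$ except with probability $n^{-\omega(1)}$, the same argument applied to the $\eta_i(\xi)\mid\xi$ yields $\mathsf{P}[(\eta_1(\xi),\dots,\eta_n(\xi))\notin\mathcal{G}_n]=o(n^{-\alpha})$. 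Splitting any event $A_n$ into $\mathbf{d}\in A_n\cap\mathcal{G}_n$ and its complement, it then suffices to prove
\[
\mathsf{P}[(\mathbf{d}_1,\dots,\mathbf{d}_n)=\mathbf{d}]=(1+o(1))\,\mathsf{P}\!\left[(\eta_1(\xi),\dots,\eta_n(\xi))=\mathbf{d}\ \middle|\ \textstyle\sum_i\eta_i(\xi)\ \text{even}\right]
\]
uniformly over $\mathbf{d}\in\mathcal{G}_n$, since summing over $A_n\cap\mathcal{G}_n$ and discarding the rest reproduces the claimed estimate (the conditioning on ``even'' costs only a factor $1+o(1)$, see below).

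Next I would write both sides explicitly. On the left, $\mathsf{P}[(\mathbf{d}_i)_i=\mathbf{d}]=N(\mathbf{d})/2^{\binom{n}{2}}$, where $N(\mathbf{d})$ is the number of labelled graphs on $[n]$ with degree sequence $\mathbf{d}$. On the right, conditionally on $\xi=x$ the $\eta_i(x)$ are independent, so $\mathsf{P}[(\eta_i(x))_i=\mathbf{d}]=\big(\prod_i\binom{n-1}{d_i}\big)x^{2m}(1-x)^{n(n-1)-2m}$ with $2m:=\sum_i d_i$; integrating against the truncated normal density $\phi$ of $\xi$ and using the Vandermonde identity $\sum_{\mathbf{d}'\,:\,\sum d'_i=2m}\prod_i\binom{n-1}{d'_i}=\binom{n(n-1)}{2m}$, one gets $\mathsf{P}[(\eta_i(\xi))_i=\mathbf{d}\mid\sum_i\eta_i(\xi)=2m]=\prod_i\binom{n-1}{d_i}\big/\binom{n(n-1)}{2m}$, which depends on $\mathbf{d}$ only through $m$ beyond the product factor. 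Hence $\mathsf{P}[(\eta_i(\xi))_i=\mathbf{d}\mid\sum_i\eta_i(\xi)\ \text{even}]=\big(\prod_i\binom{n-1}{d_i}\big)\,w(m)$ for a weight $w(m)$ depending only on $m$ (explicitly $w(m)=I(m)/\mathsf{P}[\sum_i\eta_i(\xi)\ \text{even}]$ with $I(m)=\int\phi(x)x^{2m}(1-x)^{n(n-1)-2m}\,dx$), and $\mathsf{P}[\sum_i\eta_i(\xi)\ \text{even}]=\tfrac12\big(1+\mathsf{E}[(1-2\xi)^{n(n-1)}]\big)=\tfrac12+o(1)$, because $1-2\xi$ is a centered truncated normal of variance $1/(n(n-1))$ and so its $n(n-1)$-th moment is $e^{-\Theta(n^2)}$. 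Thus the claim becomes $N(\mathbf{d})/2^{\binom{n}{2}}=(1+o(1))\big(\prod_i\binom{n-1}{d_i}\big)w(m)$, uniformly over $\mathbf{d}\in\mathcal{G}_n$.

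The engine is the asymptotic enumeration of $N(\mathbf{d})$ for \emph{dense, nearly regular} degree sequences. Here one invokes the McKay--Wormald enumeration formula --- obtained either by a saddle-point evaluation of the $n$-fold contour integral $N(\mathbf{d})=(2\pi i)^{-n}\oint\!\cdots\!\oint\prod_{i<j}(1+z_iz_j)\,\prod_i z_i^{-d_i-1}\,dz_1\cdots dz_n$, or by a switching argument on the pairing model --- which in this regime reads $N(\mathbf{d})=\big(\prod_i\binom{n-1}{d_i}\big)\,2^{\binom{n}{2}}\,\Phi(\mathbf{d})\,(1+o(1))$ for a Gaussian-type correction $\Phi(\mathbf{d})$ that, given the uniform bounds valid throughout $\mathcal{G}_n$ (in particular $\delta_i:=d_i-(n-1)/2=O(\sqrt{n\ln n})$), depends on $\mathbf{d}$ essentially only through $m$. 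One then checks $\Phi(\mathbf{d})=(1+o(1))\,w(m)$: this is precisely what the variance $1/(4n(n-1))$ of $\xi$ is chosen for, since it makes $\sum_i\eta_i(\xi)$ have the same mean $\binom{n}{2}$ and variance $\binom{n}{2}$ as the true degree sum $2\,\mathrm{Bin}(\binom{n}{2},\tfrac12)$, so the Gaussian weights placed on the edge count $m$ on the two sides match to first order. Combining this pointwise estimate with the tail bounds above gives the theorem.

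The main obstacle is exactly the uniform enumeration input: the formula $N(\mathbf{d})=\big(\prod_i\binom{n-1}{d_i}\big)2^{\binom{n}{2}}\Phi(\mathbf{d})(1+o(1))$ must hold with a multiplicative $(1+o(1))$ error \emph{simultaneously} for all $\mathbf{d}$ with $|d_i-(n-1)/2|\le c\sqrt{n\ln n}$, and proving this is the technically heavy core of the McKay--Wormald program --- a delicate multidimensional saddle-point analysis, or an intricate count of ``forbidden'' configurations under switchings. I would import that as a black box; granting it, everything else above is routine bookkeeping with binomial concentration, Stirling's formula, and Gaussian moments.
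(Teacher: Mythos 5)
This statement is not proved in the paper at all: it is imported verbatim as a cited result of McKay and Wormald (reference~\cite{MW}), and the paper's ``proof'' consists of the citation. So there is no in-paper argument to compare yours against; for the purposes of this paper the correct move is simply to quote~\cite{MW}. Your proposal is instead a sketch of how one would prove the imported theorem, itself resting on the McKay--Wormald asymptotic enumeration formula as a black box, so it does not make the statement any more self-contained than the citation does.

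Beyond that, the sketch has a concrete gap. You reduce the theorem to the claim that the point probabilities of the two models agree up to a uniform $1+o(1)$ factor over the bulk $\mathcal{G}_n=\{\mathbf{d}: |d_i-(n-1)/2|\le c\sqrt{n\ln n}\ \forall i\}$, and you justify this by asserting that the enumeration correction $\Phi(\mathbf{d})$ ``depends on $\mathbf{d}$ essentially only through $m$'' on that bulk. That is false. The integrated-binomial side, conditioned on the total, is exactly proportional to $\prod_i\binom{n-1}{d_i}$, whereas the enumeration formula carries a correction of the form $\exp\bigl(\tfrac14-\gamma_2^2/(4\lambda^2(1-\lambda)^2)\bigr)$ with $\gamma_2$ proportional to $\sum_i(d_i-\bar d)^2$. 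This quantity is not a function of $m$, and over your bulk it ranges from $0$ up to order $\ln n$, so the correction factor varies by much more than $1+o(1)$ (it equals $1+o(1)$ only when $\sum_i(d_i-\bar d)^2$ is within $o(1)$ relative error of its typical value $\sim n(n-1)\lambda(1-\lambda)$). The pointwise statement you need is therefore false on $\mathcal{G}_n$ as defined. The repair is to shrink the bulk so that it also pins down the empirical variance of the degree sequence (and to verify that this refined bulk still has probability $1-o(n^{-\alpha})$ under \emph{both} models, which requires a separate concentration argument for $\sum_i(\eta_i-\bar\eta)^2$ in the mixture model); matching the $\gamma_2$-dependence between the two sides is precisely the delicate part of the McKay--Wormald argument, and it is not subsumed by your observation that the variance $1/(4n(n-1))$ of $\xi$ matches the variance of the edge count.
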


\subsection{Proof of Lemma~\ref{cl:degrees_existence}} 
\label{sc:lm_degrees_proof}

Fix $u\in[n]$. Let $U:=N_{G_n}(u)$, $N:=|U|=\mathrm{deg}_{G_n}(u)$. We are going to use Theorem~\ref{thm:Wormald_McKay} to explain why degrees of vertices in $U$ can be replaced with independent random variables. This transference is standard. However, for convenience we give here the whole argument. 

In order to transfer the probabilities of events associated with the whole degree sequence of $G_n$, we consider the random variables $\xi$ and $\eta_1(x),\ldots,\eta_{n}(x)$, $x\in(0,1)$, as in Theorem~\ref{thm:Wormald_McKay}. Let $I_n:=\left[\frac{1}{2}-2\frac{\ln n}{n},\frac{1}{2}+2\frac{\ln n}{n}\right]$,  $f$ be the density of $\xi$ and $\Phi(t)=\int_{-\infty}^t\frac{1}{\sqrt{2\pi}}e^{-s^2/2}ds$ be the cumulative distribution function of a standard normal random variable. Then
\begin{align}
 {\sf P}(\xi\notin I_n) \,\, & =\,\,\int\limits_0^{1/2-2\ln n/n} f(x)dx+\int\limits_{1/2+2\ln n/n}^1 f(x)dx\notag\\
 & =\,\,\frac{1}{1-2\Phi(-\sqrt{n(n-1)})}\left[\int\limits_{-\sqrt{n(n-1)}}^{-4\ln n\sqrt{1-1/n}}+\int\limits_{4\ln n\sqrt{1-1/n}}^{\sqrt{n(n-1)}}\right]\frac{1}{\sqrt{2\pi}}e^{-x^2/2}dx
 \label{eq:xi_concentration}\\
 &=\,\,o\left(\frac{1}{n^2}\right)\notag,
\end{align}
since $1-\Phi(t)\sim\frac{1}{\sqrt{2\pi}t}e^{-t^2/2}$ as $t\to\infty$; see, e.g.,~\cite{Feller}. Since $\sum_{i=1}^{n}\eta_i(x)$ has binomial distributions with parameters $n(n-1)$ and $x$, it is very well known (see, e.g.,~\cite{Herschkorn}) that, for $x\in I_n$,
$$
 {\sf P}\left(\sum_{i=1}^{n}\eta_i(x)\text{ is even}\right)=\frac{1}{2}+\frac{1}{2}(1-2x)^{n(n-1)}=\frac{1}{2}+o(1).
$$

In order to transfer the probabilities of events associated with the degrees of vertices in the neighborhood of a given vertex $u$, we let $G^{\prime}:=G_n\setminus\{u\}$ to be obtained from $G_n$ by removing $u$ with all edges adjacent to $u$. Notice that the distribution of edges in $G^{\prime}$ does not depend on $U$. Clearly, for every $v\in U$, $\mathrm{deg}_{G^{\prime}}(v)=\mathrm{deg}_{G_n}(v)-1$ has binomial distribution with parameters $n-2$ and $1/2$. By the Chernoff bound, $|N-n/2|>\sqrt{2n\ln n}$ with probability at most $n^{-2+o(1)}$. Fix an integer $\nu$ such that $|\nu-n/2|\leq \sqrt{2n\ln n}$. 


All the below probabilities are conditioned on $\{N=\nu\}$. An application of Theorem~\ref{thm:Wormald_McKay} to $\mathrm{deg}_{G_n}(v),$ $v\in U$, is now legal. Without loss of generality, let $U=[N]$ and $u=n$. Since we apply it to $G^{\prime}$, $n$ in the notations of the statement of Theorem~\ref{thm:Wormald_McKay} becomes $n-1$. Let random variables $\xi$ and $\tilde\eta_1(x),\ldots,\tilde \eta_{n-1}(x)$, $x\in(0,1)$, be as in Theorem~\ref{thm:Wormald_McKay}. In the same way as above we get that, for $x\in I_n$,
$$
 {\sf P}\left(\sum_{i=1}^{n-1}\tilde\eta_i(x)\text{ is even}\right)=\frac{1}{2}+\frac{1}{2}(1-2x)^{(n-1)(n-2)}=\frac{1}{2}+o(1).
$$


Let $x\in I_n$, $\eta_i=\eta_i(x)$ for $i\in[n]$ and $\tilde\eta_i=\tilde\eta_i(x)$ for $i\in[n-1]$.

Fix $p\in\mathcal{P}$. Set $\mathcal{J}=\left[\frac{n}{2}-\frac{p}{2},\frac{n}{2}+\frac{p}{2}\right]$, $\mathcal{I}=\left[\frac{n}{2}-\frac{\sqrt{2}}{2}\sqrt{n\ln n},\frac{n}{2}+\frac{\sqrt{2}}{2}\sqrt{n\ln n}\right]$. Note that, by the de Moivre--Laplace limit theorem whp all $\eta_i(x)$ and $\tilde\eta_i(x)$ belong to $\mathcal{I}$. Due to the choice of $\mathcal{P}$, in $\mathcal{I}\setminus\mathcal{J}$ there is at most one number divisible by $p$. Assume that $d'\neq d^*$ is the number (if there are two, then take the smallest) divisible by $p$ and closest to $\mathcal{J}$. \\

1. For positive integers $y_1,\ldots,y_n$, consider the set $\mathcal{P}^*(y_1,\ldots,y_{\nu})$ of all $p\in\mathcal{P}$ such that there exists $i\in[\nu]$ with  $y_i=d^*(p)-1$, and the set $\mathcal{P}'(y_1,\ldots,y_n)$ of all $p$ such that there exist $i\in[n]$ and $j\in[n]$ with $y_i=d^*(p)$ and $y_j=d'(p)$. Consider the events $\mathcal{E}_n^*(y_1,\ldots,y_{\nu})$ and $\mathcal{E}_n'(y_1,\ldots,y_n)$ saying that $|\mathcal{P}^*(y_1,\ldots,y_{\nu})|< n^{\delta}$ and $|\mathcal{P}'(y_1,\ldots,y_n)|\geq 1$ respectively.


We shall prove that ${\sf P}(\mathcal{E}_n^*(\mathbf{d}_1-1,\ldots,\mathbf{d}_{\nu}-1))=o(1/n)$, and that ${\sf P}(\mathcal{E}_n'(\mathbf{d}_1,\ldots,\mathbf{d}_n))=o(1)$, and this immediately implies the first part of Lemma~\ref{cl:degrees_existence}. We first prove this for $\mathcal{E}_n^*(\tilde\eta_1,\ldots,\tilde\eta_{\nu})$ and $\mathcal{E}_n'(\eta_1,\ldots,\eta_n)$.

 For $i\in[\nu]$, 
 by the de Moivre--Laplace limit theorem,
$$
{\sf P}(\tilde\eta_i=d^*(p)-1)=\Omega(n^{-3/2+\delta}).
$$
Therefore,
$$
{\sf P}(\exists p\in\mathcal{P}\,\,\tilde\eta_i=d^*(p)-1)=\Omega(|\mathcal{P}|n^{-3/2+\delta})=\Omega(n^{-1+\delta}\sqrt{\ln n}).
$$

We get that 
$$
{\sf E}|\mathcal{P}^*(\tilde\eta_1,\ldots,\tilde\eta_{\nu})|=\nu{\sf P}(\exists p\,\,\tilde\eta_1=d^*(p)-1)=\Omega(n^{\delta}\sqrt{\ln n}).
$$
By the Chernoff bound, ${\sf P}(|\mathcal{P}^*(\tilde\eta_1,\ldots,\tilde\eta_{\nu})|<n^{\delta})=o(1/n)$. In other words, ${\sf P}(\mathcal{E}_n^*(\tilde\eta_1,\ldots,\tilde\eta_{\nu}))=o(1/n)$ as needed.

It remains to show that whp $\mathcal{P}'(\eta_1,\ldots,\eta_n)$ is empty. If $p\in\mathcal{P}$ is such that 
$$
d^*(p)\in\mathcal{I}_{\delta}:=\left(\frac{n}{2}-\left(\frac{1}{\sqrt{2}}-2\delta\right)\sqrt{n\ln n},\frac{n}{2}+\left(\frac{1}{\sqrt{2}}-2\delta\right)\sqrt{n\ln n}\right),
$$
then $d'(p)\notin\mathcal{I}$ implying that whp there are no $i\in[n]$ with $\eta_i=d'(p)$. On the other hand, for distinct $i,j\in[n]$, integers $d^*\notin\mathcal{I}_{\delta}$ and $d'\notin\mathcal{J}$, we have that 
$$
{\sf P}(\eta_i=d^*,\eta_j=d')=O(n^{-3+5\delta}).
$$
Therefore,
$$
{\sf P}(\exists p\,\,\eta_i=d^*(p)\notin\mathcal{I}_{\delta},\eta_j=d'(p))=O(n^{-5/2+2\delta}\sqrt{\ln n}).
$$
By Markov's inequality, whp there are no $i,j\in[n]$ such that, for some $p\in\mathcal{P}$, $\eta_i=d^*(p)$ and $\eta_j=d'(p)$. In other words, ${\sf P}(\mathcal{E}_n'(\eta_1,\ldots,\eta_n))=o(1)$ as needed.\\

Now, let us transfer this result to the degree sequence. By Theorem~\ref{thm:Wormald_McKay},
\begin{align*}
 {\sf P}(\mathcal{E}_n^*(\mathbf{d}_1-1,\ldots,\mathbf{d}_{\nu}-1)) \,\, & 
 =\,\,(1+o(1)){\sf P}\left[\mathcal{E}_n^*(\tilde\eta_1(\xi),\ldots,\tilde\eta_{\nu}(\xi))\left|\sum_{i=1}^{n-1}\tilde\eta_i(\xi)\text{ is even}\right.\right]+o(n^{-2})\\
  &\leq\,\, (1+o(1))\frac{{\sf P}\left[\mathcal{E}_n^*(\tilde\eta_1(\xi),\ldots,\tilde\eta_{\nu}(\xi))\right]}{{\sf P}\left[\sum_{i=1}^{n-1}\tilde\eta_i(\xi)\text{ is even}\right]}+o(n^{-2})\\
  &\leq\,\, (2+o(1)){\sf P}\left[\mathcal{E}_n^*(\tilde\eta_1(\xi),\ldots,\tilde\eta_{\nu}(\xi))\right]+o(n^{-2}).
\end{align*}
Furthermore, by (\ref{eq:xi_concentration}),
\begin{align*}
 {\sf P}\left[\mathcal{E}_n^*(\tilde\eta_1(\xi),\ldots,\tilde\eta_{\nu}(\xi))\right] \,\, & \leq\,\,\int_{1/2-2\ln n/n}^{1/2+2\ln n/n}
 \left[\mathcal{E}_n^*(\tilde\eta_1(x),\ldots,\tilde\eta_{\nu}(x))\right]f(x)dx+o(n^{-2})\\
 &\leq\,\,
 \max_{|x-1/2|\leq 2\ln n/n}{\sf P}\left[\mathcal{E}_n^*(\tilde\eta_1(x),\ldots,\tilde\eta_{\nu}(x))\right]+o(n^{-2})\,\,=\,\,o(n^{-1}).
\end{align*}

In the same way, Theorem~\ref{thm:Wormald_McKay} implies that
$$
 {\sf P}\left[\mathcal{E}_n'(\mathbf{d}_1,\ldots,\mathbf{d}_n)\right]
 \leq (2+o(1))\max_{|x-1/2|\leq 2\ln n/n}{\sf P}\left[\mathcal{E}_n'(\eta_1(x),\ldots,\eta_n(x))\right]+o(n^{-2})=o(1).
$$ 
The first part of Lemma~\ref{cl:degrees_existence} is proven.\\

2. For positive integers $y_1,\ldots,y_{\nu}$ and $\ell\in[k]$, consider the event $\mathcal{A}_{\ell}(y_1,\ldots,y_{\nu})$ saying that the number of $i\in[\nu]$ such that $y_i=d^*(p_{\ell})-1$ equals $\mu_{\ell}$. Also, consider the event $\mathcal{B}_{\ell}(y_1,\ldots,y_{\nu})$ saying that there exists $i\in[\nu]$ such that $y_i=d'(p_{\ell})-1$. We shall prove that 
$$
{\sf P}(\forall\ell\in[\nu]\,\,\mathcal{A}_{\ell}(\mathbf{d}_1-1,\ldots,\mathbf{d}_{\nu}-1)\cup \mathcal{B}_{\ell}(\mathbf{d}_1-1,\ldots,\mathbf{d}_{\nu}-1))=o(1/n),
$$
and this immediately implies the second part of Lemma~\ref{cl:degrees_existence}. Set $\vec{\bold d}=(\mathbf{d}_1-1,\ldots,\mathbf{d}_{\nu}-1)$. We get
\begin{multline*}
{\sf P}\biggl(\bigcap_{\ell\in[k]}\,\,\mathcal{A}_{\ell}(\vec{\bold d})\cup \mathcal{B}_{\ell}(\vec{\bold d})\biggr)\,\,\leq\,\,
{\sf P}\biggl(\bigcup_{1\leq\ell_1<\ell_2\leq k}\mathcal{B}_{\ell_1}(\vec{\bold d})\cap\mathcal{B}_{\ell_2}(\vec{\bold d})\biggr)\\
+{\sf P}\biggl(\bigcap_{\ell\in[k]}\,\,\mathcal{A}_{\ell}(\vec{\bold d})\biggr)+\sum_{\ell\in[k]}{\sf P}\biggl(\bigcap_{\tilde\ell\in[k],\tilde\ell\neq\ell}\,\,\mathcal{A}_{\tilde \ell}(\vec{\bold d})\biggr).
\end{multline*}

By the union and by symmetry, it remains to prove that
$$
{\sf P}\biggl(\mathcal{B}_1(\vec{\bold d})\cap\mathcal{B}_2(\vec{\bold d})\biggr)=o\left(\frac{1}{n\ln^2n}\right),\quad
{\sf P}\biggl(\bigcap_{\ell\in[k-1]}\,\,\mathcal{A}_{\ell}(\vec{\bold d})\biggr)=o\left(\frac{1}{n\ln n}\right).
$$

By Theorem~\ref{thm:Wormald_McKay}, it is sufficient to prove this for $\mathbf{d}_1-1,\ldots,\mathbf{d}_{\nu}-1$ replaced by $\tilde\eta_1(\xi),\ldots,\tilde\eta_{\nu}(\xi)$ and $x\in I_n$. 

Since $d'(p_1)$ and $d'(p_2)$ do not belong to $\mathcal{J}$, we get by the de Moivre--Laplace limit theorem
$$
 {\sf P}(\tilde\eta_1(x)=d'(p_1)-1,\tilde\eta_2(x)=d'(p_2)-1)=O(n^{-3+4\delta}).
$$
By Markov's inequality,
\begin{multline*}
{\sf P}\biggl(\mathcal{B}_1(\tilde\eta_1(x),\ldots,\tilde\eta_{\nu}(x))\cap\mathcal{B}_2(\tilde\eta_1(x),\ldots,\tilde\eta_{\nu}(x))\biggr) =\\
={\sf P}\biggl(\exists i,j\in[\nu]\,\, \tilde\eta_1(x)=d'(p_1)-1,\tilde\eta_2(x)=d'(p_2)-1\biggr)
=o\left(\frac{1}{n\ln^2n}\right)
\end{multline*}
as desired.

Note that, since all $\mu_{\ell}$ are positive, for some non-negative $P_1,\ldots,P_k$ such that $P_1+\ldots+P_k=1$, we get
\begin{multline*}
{\sf P}\biggl(\bigcap_{\ell\in[k-1]}\,\,\mathcal{A}_{\ell}(\tilde\eta_1(x),\ldots,\tilde\eta_{\nu}(x))\biggr)\,\,
=\,\,{\nu\choose\mu_1,\ldots,\mu_{k-1}}P_1^{\mu_1}\ldots P_{k-1}^{\mu_{k-1}}P_{k}^{n-\mu_1-\ldots-\mu_{k-1}}\\
\leq\,\,\left(\frac{1+o(1)}{\sqrt{2\pi}}\right)^{k-1}\left[\left(\frac{P_1\nu}{\mu_1}\right)^{\frac{\mu_1}{\nu}}\ldots\left(\frac{P_{k-1}\nu}{\mu_{k-1}}\right)^{\frac{\mu_{k-1}}{\nu}}\left(\frac{P_{k}\nu}{\nu-\mu_1-\ldots-\mu_{k-1}}\right)^{1-\frac{\mu_1}{\nu}-\ldots-\frac{\mu_{k-1}}{\nu}}\right]^{\nu}\\
\leq\,\,\left(\frac{1+o(1))}{\sqrt{2\pi}}\right)^{k-1}\,\,=\,\,o\left(\frac{1}{n\ln n}\right)
\end{multline*}
as needed. Indeed, it is very well known that, for positive $t_1,\ldots,t_k$ such that $t_1+\ldots+t_k=1$,
$$
 t_1\ln\frac{P_1}{t_1}+\ldots+t_k\ln\frac{P_k}{t_k}\leq 0.
$$
In particular, the latter inequality follows from Jensen's inequality ${\sf E}f(Z)\geq f({\sf E}Z)$ where $f(z)=-\ln z$ and $Z$ has the following distribution: 
${\sf P}(Z=P_{\ell}/t_{\ell})=t_{\ell}$.\\

3. For a positive integer $d$ and integers $y_1,\ldots,y_n$ consider the event $\mathcal{E}_{d}(y_1,\ldots,y_n)$ saying that the number of $i\in[n]$ such that $y_i=d$ is greater than $2\sqrt{n}$. 
To prove the third part of Lemma~\ref{cl:degrees_existence}, it is sufficient to show that, for every $x\in I_n$, ${\sf P}(\exists d\,\,\mathcal{E}_d(\eta_1(x),\ldots,\eta_n(x)))=o(1)$ and apply Theorem~\ref{thm:Wormald_McKay}. By the union bound, it is enough to show that, for every $d\in[n-1]$,
$$
{\sf P}(\mathcal{E}_d(\eta_1(x),\ldots,\eta_n(x)))=o\left(\frac{1}{n}\right).
$$
Fix $d\in[n-1]$. Let $X$ count the number of $i\in[n]$ such that $\eta_i(x)=d$. By the de Moivre--Laplace limit theorem, $\pi_d:={\sf P}(\eta_i(x)=d)\leq\frac{1}{\sqrt{\pi n/2}}(1+o(1))$. Since $X$ has Binomial distribution with parameters $n$ and $\pi_d$, we get by the Chernoff bound that 
$$
{\sf P}(\mathcal{E}_d(\eta_1(x),\ldots,\eta_n(x)))={\sf P}(X>2\sqrt{n})\leq e^{-\frac{(2-\sqrt{2/\pi})^2}{2(\sqrt{2/\pi}+(2-\sqrt{2/\pi})/3)}\sqrt{n}}=o\left(\frac{1}{n}\right).
$$



\section{Graph families}

\subsection{Complete graphs}
\label{sc:cliques}

In this section, we prove Theorem~\ref{thm:cliques}. Let $G=K_n$ and $u$ be its vertex. Let us find $p_2$. 

Let us fix positive integers $1\leq k\leq t$ and a vertex $v$ of $G$.
Let $\mathcal{A}_{t,k}(v,u)$ be the event saying that $\pi_t$ meets $\pi_{t+2}$ for the first time at point $(v,t-k+1)$. 

First, consider $k=1$. Then $\mathcal{A}_{t,1}(v,u)=\varnothing$ if and only if $v\neq u$. Moreover, ${\sf P}(\mathcal{A}_{t,1}(u,u))=\frac{(1-\varepsilon)^2}{n-1}$. 

For $k=2$, we get that $\mathcal{A}_{t,2}(v,u)=\varnothing$ if and only if $v=u$. If $v\neq u$, then  
\begin{align*}
{\sf P}(\exists v\,\,(v,t-1)\text{ is the first common vertex of }\pi_t,\pi_{t+2}) \,\, 
&=\,\,{\sf P}(\sqcup_{v\neq u}\mathcal{A}_{t,2}(v,u))\\
&=\,\,\frac{(1-\varepsilon)^4(n-2)^2}{(n-1)^3},
\end{align*} 
since the second edge of $\pi_{t+2}$ might reach a vertex $w$ other than $u$ (it happens with probability $\frac{n-2}{n-1}$), the first edge of $\pi_t$ might reach a vertex $v$ other than $w$ (it happens with probability $\frac{n-2}{n-1}$ as well), and the third edge of $\pi_{t+2}$ might reach the same vertex $v$ (with probability $\frac{1}{n-1}$).

Finally, let $k\geq 3$. Then $\mathcal{A}_{t,k}(v,u)\neq\varnothing$ for all choices of $v$ and does not depend on this choice. Let us compute ${\sf P}(\mathcal{A}_{t,k}(u))$, where $\mathcal{A}_{t,k}(u)=\sqcup_{v}\mathcal{A}_{t,k}(v,u)$. Assume that the second edge of $\pi_{t+2}$ reaches a vertex $w\neq u$ (with probability $\frac{n-2}{n-1}$). Then the number of choices (to satisfy $\mathcal{A}_{t,k}(u)$) of the destinations of edges initiated at $(u,t)$ and $(w,t)$ equals $(n-2)(n-3)+2(n-1)-1$ (there are $(n-2)(n-3)$ choices of a pair of destinations other than $u$ and $w$, $n-1$ choices of an edge $((w,t),(z,t-1))$ when $((u,t),(w,t-1))\in E(\mathcal{D}_G)$, and $n-1$ choices of an edge $((u,t),(z,t-1))$ when $((w,t),(u,t-1))\in E(\mathcal{D}_G)$). In the same way, if, for some $i\in[k-3]$, $(w_1,t-i)$ and $(w_2,t-i)$ are the $(i+1)$th and the $(i+3)$th vertices of $\pi_t$ and $\pi_{t+2}$ respectively, then there are $(n-2)(n-3)+2(n-1)-1$ choices of the destinations of $(w_1,t-i)$ and $(w_2,t-i)$ in $E(\mathcal{D}_G)$. Finally, if $(w_1,t-k+2)$ and $(w_2,t-k+2)$ are the $(k-1)$th and the $(k+1)$th vertices of $\pi_t$ and $\pi_{t+2}$ respectively, then the probability that they meet at the next step is exactly $\frac{n-2}{(n-1)^2}$. We get
$$
 {\sf P}(\mathcal{A}_{t,k}(u))=(1-\varepsilon)^{2k}\frac{n-2}{n-1}\left(\frac{(n-2)(n-3)+2(n-1)-1}{(n-1)^2}\right)^{k-2}\frac{n-2}{(n-1)^2}.
$$
Summing up,
\begin{align}
 p_2\,\,
 &=\,\,\frac{(1-\varepsilon)^2}{n-1}+\sum_{k=2}^{\infty}\frac{(1-\varepsilon)^{2k}(n-2)^2}{(n-1)^3}\left(\frac{n^2-3n+3}{(n-1)^2}\right)^{k-2}\notag\\
 &=\,\,\frac{(1-\varepsilon)^2}{n-1}+\frac{(1-\varepsilon)^4(n-2)^2}{(n-1)((n-1)^2-(1-\varepsilon)^2(n^2-3n+3))}.\label{eq:cliques_p2}
\end{align}

In the same way, we get that 
$$
 p_1=\sum_{k=2}^{\infty}\frac{(1-\varepsilon)^{2k-1}(n-2)}{(n-1)^2}\left(\frac{n^2-3n+3}{(n-1)^2}\right)^{k-2}
$$
implying that $p_2=\frac{(1-\varepsilon)^2}{n-1}+(1-\varepsilon)\frac{n-2}{n-1}p_1$. Due to Theorem~\ref{th:NVM_main}, it remains to show that one of the two equalities $p_1(K_{n_1},u_1)=p_1(K_{n_2},u_2)$, $p_2(K_{n_1},u_1)=p_2(K_{n_2},u_2)$ is always false. Assume the contrary --- both inequalities hold for some $n_1\neq n_2$. Then, letting $p_1:=p_1(K_{n_1},u_1)$, we get 
$$
 p_1+\frac{1-\varepsilon-p_1}{n_1-1}=\frac{1-\varepsilon}{n_1-1}+\frac{n_1-2}{n_1-1}p_1=
 \frac{1-\varepsilon}{n_2-1}+\frac{n_2-2}{n_2-1}p_1=
 p_1+\frac{1-\varepsilon-p_1}{n_2-1}
$$
implying that $n_1=n_2$ --- a contradiction. This completes the proof of Theorem~\ref{thm:cliques}.

\begin{remark}
Let us show that $p_2$ is not sufficient to apply Theorem~\ref{th:NVM_main}. The function of $n$ in the right hand side of (\ref{eq:cliques_p2}) decreases for $n\geq 6$. Indeed, set $f(x)=\frac{1}{x}+\frac{(1-\varepsilon)^2(x-1)^2}{x(x^2-(1-\varepsilon)^2(x^2-x+1))}$. If $x\geq 5$, then 
\begin{equation}
 f'(x)=\frac{-x^4(2\varepsilon-\varepsilon^2)^2-x^2(x-1)(x-5)(1-\varepsilon)^2(2\varepsilon-\varepsilon^2)}{x^2(x^2-(1-\varepsilon)^2(x^2-x+1))}<0
 \label{eq:cliques_derivative}
\end{equation}
as needed. Therefore, for any vertex $u_1$ of $K_{n_1}$, and any $u_2$ of $K_{n_2}$, $6\leq n_1< n_2$, $p_2(K_{n_1},u_1)\neq p_2(K_{n_2},u_2)$. However, it is easy to see that $f'(x)$ in (\ref{eq:cliques_derivative}) is positive on $[2,4]$ for $\varepsilon$ close to 0 and negative on $[2,4]$ for $\varepsilon$ close to 1. Therefore, there exists $\varepsilon\in(0,1)$ such that $f(2)=f(4)$, i.e. $p_2(K_3,u_1)=p_2(K_5,u_2)$, and Theorem~\ref{th:NVM_main} can not be applied for $d=2$.
\end{remark}


\subsection{Cycles}
\label{sc:cycles}

In this section, we prove Theorem~\ref{thm:cycles}. Let $G=C_n$ and $u$ be its vertex. Let us find $p_1$ and $p_2$. Note that, if $n$ is even, then $p_1=0$. Assume that $n$ is odd. One of the possibilities is that $\pi_t$ meets $\pi_{t+1}$ for the first time at a point $(v,s)$ in $\mathcal{D}_G$ so that $v$ is at distance $\lfloor n/2\rfloor$ from $u$, $s=t-\lfloor n/2\rfloor$, and $\pi_t$, $\pi_{t+1}$ follow from $u$ to $v$ in opposite directions. Therefore, $p_1\geq\left(\frac{1-\varepsilon}{2}\right)^n$. Due to Theorem~\ref{th:NVM_main}, it immediately implies that $S^{(1)}$ distinguishes between two cycles such that $n_1,n_2$ have different parities. 

In order to distinguish $C_{n_1},C_{n_2}$ such that $n_1,n_2$ have equal parities, we prove that $p_2(C_{n_1},u_1)\neq p_2(C_{n_2},u_2)$ and apply Theorem~\ref{th:NVM_main} as usual. For convenience, we assume that both $n_1,n_2$ are odd. Otherwise, the proof is essentially the same (just replace everywhere below $\pm 2n_1$ and $\pm 2n_2$ with $\pm n_1$ and $\pm n_2$ respectively).

Let us consider $p_2$ defined for $(G=C_n,u)$. Consider a simple symmetric random walk $R_s$, $s\in\mathbb{Z}_+$, on $\mathbb{Z}$. Let $\xi_s$, $s\in\mathbb{N}$, be a sequence of independent Bernoulli random variables with success probability $1-\varepsilon$, and let $\tau=\min\{s:\xi_s=0\}$. Hereinafter we let $R_s$ terminate as soon as $\xi_s=0$, and denote {\it the terminable} version of $R_s$ by $R_s^*$ (i.e. $R_s^*=R_s$ for all $s<\tau$). We stick to the convention that the value of $R_s^*$ is undefined whereas $s\geq\tau$. Consider two independent simple symmetric terminable random walks $R^*_s$ and $\tilde R^*_s$ on $\mathbb{Z}$. Note that ${\sf P}(\pi_t\cap\pi_{t+2}\neq\varnothing)$ equals the probability that, at some moment $s$ (which is less than both independent stopping times $\tau$ and $\tilde\tau$), $\tilde R^*_{s+2}-R^*_s$ is divisible by $n$. The values of $\mathcal{R}_s:=\tilde R^*_{s+2}-R^*_s$ are even, and $-2,0,2$ are all the possible values of $\mathcal{R}_{s+1}-\mathcal{R}_s$. 

Now, assume that $n_2>n_1$. It remains to prove that
\begin{equation*}
 {\sf P}(\exists s<\min\{\tau,\tilde\tau\}\,\,\, \mathcal{R}_s\text{ is divisible by }n_1)>
 {\sf P}(\exists s<\min\{\tau,\tilde\tau\}\,\,\, \mathcal{R}_s\text{ is divisible by }n_2).
\end{equation*}
If at some moment $s<\tau^*:=\min\{\tau,\tilde\tau\}$, $n_2$ divides $\mathcal{R}_s$, then, either $\mathcal{R}_s=0$, and then $n_1$ divides $\mathcal{R}_s$ as well, or $\mathcal{R}_s=2jn_2$ for some integer $j\neq 0$. In the latter case, by continuity, at some moment $s^*<s$, 
 $\mathcal{R}_{s^{*}}=\pm 2n_1$. Therefore, the existence of $s<\tau^*$ such that $\mathcal{R}_s$ is divisible by $n_2$ implies the existence of $s<\tau^*$ such that $\mathcal{R}_s$ is divisible by $n_1$. It remains to show that there is an event that has positive probability and happens when $\mathcal{R}$ reaches a number divisible by $n_1$, but never reaches a number divisible by $n_2$. But this is obvious: on the one hand, with positive probability, there exists $s<\tau^*$ such that $\mathcal{R}_s=2n_1$ and, for all $t<s$, $\mathcal{R}_t$ is not divisible by $n_1$ (just set $s=n_1$ and let $\mathcal{R}_s$ move $s$ times upward). On the other hand, if $\mathcal{R}_s=2n_1$, then the probability that $\mathcal{R}_{\geq s}$ terminates before it reaches the value $0$ or $2n_2$ is at least $\varepsilon$. 


\subsection{Perfect trees}
\label{sc:trees}

In this section, we prove Theorem~\ref{th:trees}.



1. Let $(T,v)$ be a perfect $k$-ary tree of height $h$, $h\geq 2$ is fixed. It is sufficient to prove that $p_2=p_2(k)$ decreases with $k$ and apply Theorem~\ref{th:NVM_main}. Clearly, $p_2=(1-\varepsilon)^2\frac{1}{k+1}+(1-\varepsilon)^2\frac{k}{k+1}f(k)$, where $f(k)$ is the probability that two {\it terminable} independent random walks (at every step, a random walk terminates with probability $\varepsilon$) on $T$ originated at $v$ and at a vertex at distance 2 from $v$ meet. Since $f(k)<1$, it remains to prove that $f(k)$ decreases with $k$. For any two vertices $u,w$ and time moment $t$, let $P_t(T;u,w)$ be the probability that terminable random walks on $T$ originated at $u$ and $w$ meet before the time $t$.

Let $(T',v)$ be a perfect $(k+1)$-ary tree of the same height. Consider a monomorphism $\varphi:V(T)\to V(T')$ of rooted trees. Let us prove by induction on $t$, that, for any distinct $u,w\in V(T)$ at an even distance from each other (otherwise, two random walks initiated at $u$ and $w$ never meet), $P_t(T;u,w)> P_t(T';\varphi(u),\varphi(w))$ (this is clearly sufficient). This is obvious for $t=1$: if the vertices are at distance $2$, then $P_t(T;u,w)=\frac{(1-\varepsilon)^2}{(k+1)^2}$ and $P_t(T';\varphi(u),\varphi(w))=\frac{(1-\varepsilon)^2}{(k+2)^2}$; otherwise both probabilities equal to 0. Assume that the inequality holds for some $t$. 

First, let $u,w$ be not leaves. Consider neighbors $u^{-},u^+$ of $u$ and neighbors $w^-,w^+$ of $w$ such that $u^-$ and $w^-$ belong to the path between $u$ and $w$ while $u^+$ and $w^+$ do not. Then
\begin{align*}
 P_{t+1}(T';\varphi(u),\varphi(w)) \,\,
 &=\,\, 
 \frac{k^2 P_t(T';\varphi(u^+),\varphi(w^+))}{(k+1)^2}+
  \frac{k P_t(T';\varphi(u^-),\varphi(w^+))}{(k+1)^2}\\
 &\quad\quad\quad\quad +\frac{kP_t(T';\varphi(u^+),\varphi(w^-))}{(k+1)^2}+
 \frac{P_t(T';\varphi(u^-),\varphi(w^-))}{(k+1)^2}\\
 &<\,\, \frac{k^2 P_t(T;u^+,w^+)}{(k+1)^2}+
 \frac{k P_t(T;u^-,w^+)}{(k+1)^2}\\
 &\quad\quad\quad\quad+\frac{k P_t(T;u^+,w^-)}{(k+1)^2}+
 \frac{P_t(T;u^-,w^-)}{(k+1)^2}\\
 &\leq\,\,\frac{(k-1)^2 P_t(T;u^+,w^+)}{k^2}+
 \frac{(k-1) P_t(T;u^-,w^+)}{k^2}\\
 &\quad\quad\quad\quad+\frac{(k-1) P_t(T;u^+,w^-)}{k^2}+
 \frac{P_t(T;u^-,w^-)}{k^2}\\
 &=\,\,P_{t+1}(T;u,w),
\end{align*}
where the last inequality is obtained by spreading out the extra part $\left(\frac{k}{k+1}\right)^2$-$\left(\frac{k-1}{k}\right)^2$ of probability $\left(\frac{k}{k+1}\right)^2$ across the remaining three probabilities and applying the claim stated below.\\

\begin{claim}
Let $u,u',w$ be vertices of $T$ such that the distance between $u$ and $w$ is even in $T$, and $u'$ belongs to the path joining $u$ with $w$ and is at distance 2 from $u$. Then $P_t(T;u,w)\leq P_t(T;u',w)$.\\
\label{cl:monotone_path_length}
\end{claim}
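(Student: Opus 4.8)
The plan is to prove the inequality $P_t(T;u,w)\leq P_t(T;u',w)$ by induction on $t$, running the induction simultaneously over all admissible triples $(u,u',w)$. Here $u'$ lies on the path from $u$ to $w$ at distance $2$ from $u$, so moving from $u$ to $u'$ strictly decreases the distance to $w$ (by $2$); the intuition is simply that starting closer makes meeting more likely. For the base case $t=1$ the two terminable walks can meet within one step only if their starting points are at distance $2$; since $\mathrm{dist}(u',w)\leq\mathrm{dist}(u,w)$ and both distances are even, $P_1(T;u,w)>0$ forces $\mathrm{dist}(u,w)=2$, in which case $\mathrm{dist}(u',w)\in\{0,2\}$ and one checks $P_1(T;u,w)\leq P_1(T;u',w)$ directly (if $\mathrm{dist}(u',w)=0$ the right side is $0$ but then the walks started at $u',w$ have already met, so we interpret $P_1=1$; if $\mathrm{dist}(u',w)=2$ both sides equal $\tfrac{(1-\varepsilon)^2}{(k+1)^2}$ — actually we need the convention that $P_t(T;w,w)=1$, which we adopt).

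For the inductive step, condition on the first move of each of the two terminable walks. Write $u^-$ for the neighbor of $u$ toward $w$ and $u^+,u^{++},\ldots$ for the other neighbors (the "$+$'' directions, of which there are $k$ if $u$ is not a leaf, and likewise $0$ if $u$ is a leaf); similarly decompose the moves from $w$ into $w^-$ (toward $u$) and the $k$ outward directions. Each walk also terminates with probability $\varepsilon$. One obtains a recursion expressing $P_{t+1}(T;u,w)$ as a convex combination (with weights $(1-\varepsilon)^2/(\deg u\cdot\deg w)$ over move-pairs, plus the terminating contribution, which contributes $0$ to future meetings unless the walks have already met) of $P_t(T;\,\cdot\,,\,\cdot\,)$ over the pairs of new positions, and analogously for $u'$ in place of $u$. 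The outward directions from $u$ (resp.\ $u'$) all behave symmetrically, so after grouping we need: for each outward neighbor choice of $w$, the quantity $P_t$ started from (outward-$u$, that $w$-position) is dominated by $P_t$ started from (outward-$u'$, that $w$-position), and likewise for $P_t$ from ($u^-$, that $w$-position) versus ($u'{}^-$, that $w$-position). The key point is that in each of these comparisons, the $u$-side endpoint is again obtained from the $u'$-side endpoint (or vice versa) by a distance-$2$ shift \emph{along the geodesic toward the fixed $w$-endpoint}: if $u'$ is the distance-$2$ point from $u$ toward $w$, then $u'{}^-$ (toward $w$) is the distance-$2$ point from $u^-$ toward $w$, and $u'$ itself — reached by the $u^+$-move being "cancelled'' — needs a little care. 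Concretely, after one step from $u$ an outward move lands at some $u^+$ with $\mathrm{dist}(u^+,w)=\mathrm{dist}(u,w)+1$ (odd, hence that pair contributes only via the next step), while from $u'$ the corresponding outward move lands at a vertex at distance $\mathrm{dist}(u,w)-1$; pairing these with a matched move of the $w$-walk restores even distances and exhibits the required distance-$2$-toward-$w$ relationship, so the inductive hypothesis applies term by term. Summing the term-by-term inequalities with the (identical, nonnegative) convex weights yields $P_{t+1}(T;u,w)\leq P_{t+1}(T;u',w)$.

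The main obstacle is bookkeeping the case analysis cleanly: whether $u$, $u'$, $w$ are leaves or internal vertices changes the degrees and hence the available moves, and when $\mathrm{dist}(u',w)=0$ the pair has already coalesced and must be treated as an absorbing "already met'' state with $P_t\equiv 1$ rather than via the recursion. One must also handle the boundary subtlety that a walk sitting at a leaf can only move back toward its parent, which \emph{reduces} the branching and tends to help meetings — this is consistent with the desired direction of the inequality but must be checked so that the matched-move pairing between the $u$-recursion and the $u'$-recursion still goes through (the $u'$-walk, being closer to $w$, might be at a leaf while the $u$-walk is not, or conversely). Once the pairing of summands is set up so that every term on the $u$-side is matched with a term on the $u'$-side satisfying the inductive hypothesis, and the leftover weight (when $u$ has more outward moves available than can be matched, or when $u'$ has already met $w$) is absorbed into a term that is trivially $\geq$ the corresponding $u$-side term, the proof closes. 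I would organize the write-up by first stating the one-step recursion for $P_{t+1}(T;a,b)$ in general, then verifying the base case, then doing the inductive step in the two subcases $\mathrm{dist}(u,w)=2$ (so $u'=w$, immediate) and $\mathrm{dist}(u,w)\geq 4$ (the genuine case), within each handling leaf/non-leaf positions of $u$ and $w$.
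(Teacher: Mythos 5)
Your plan --- induction on $t$ via the one-step recursion, matching the first moves of the $(u,w)$-system with those of the $(u',w)$-system term by term --- has a genuine gap in the inductive step, precisely at the point you defer to ``bookkeeping''. After one step you must compare $P_t(T;a,b)$ with $P_t(T;a',b)$ for matched pairs of new positions, but your inductive hypothesis only covers the case where $a'$ lies on the geodesic from $a$ to $b$ at distance $2$ from $a$ (or chains of such shifts), and tree symmetry only identifies vertices related by an automorphism fixing $b$. For the outward moves neither tool applies. Concretely, let $w$ be a descendant of $u$ at distance $2\ell\geq 6$, so $u'$ sits two levels below $u$ on the path to $w$, and let $m$ denote the child of $u$ toward $w$ (so $m$ is the parent of $u'$). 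The $k$ outward moves from $u$ go to the parent of $u$ and to the $k-1$ children of $u$ off the path; the $k$ outward moves from $u'$ go to $m$ and to the $k-1$ children of $u'$ off the path. Whatever bijection you choose, at least one vertex $c$ in the first set (a child of $u$, or the parent of $u$) gets matched to a child $c'$ of $u'$. The distance-$2$ shift of $c$ along its geodesic to $b$ is $m$, which lies two levels \emph{above} $c'$ in the tree; automorphisms of a perfect $k$-ary tree fix the root and hence preserve depth, so no symmetry identifies $P_t(T;m,b)$ with $P_t(T;c',b)$, and $c'$ is not on the geodesic from $c$ (or from $m$) to $b$. So the required inequality $P_t(T;c,b)\leq P_t(T;c',b)$ is not delivered by the induction hypothesis, by chaining it, or by symmetry, and the term-by-term comparison does not close. (A secondary unresolved point: the one-step weights are $1/(\deg u\cdot\deg w)$ versus $1/(\deg u'\cdot\deg w)$, which are not identical when exactly one of $u,u'$ is a leaf or the root, so even in the favourable cases the comparison is between convex combinations with different weights, not a matching of equiprobable terms.)

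The paper avoids all of this by a coupling rather than an induction: it runs the walk from $u'$ jointly with the walks from $u$ and $w$, coupled (using the symmetry of the perfect tree to redirect a step off the geodesic back onto it without changing the walk's law) so that at every time $s$ the position $R_s(u')$ lies on the path between $R_s(u)$ and $R_s(w)$. When the $(u,w)$ pair coalesces that path degenerates to a point, so the $(u',w)$ pair has already coalesced, giving $P_t(T;u,w)\leq P_t(T;u',w)$ directly. If you want to salvage an inductive proof you would need a substantially stronger hypothesis (some monotonicity in the distance to $w$ that is uniform over the depth of the starting vertex), and it is not clear such a statement is preserved by the one-step recursion; the sandwiching coupling is the cleaner route.
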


The proof of this claim is straightforward since without loss of generality, by symmetry, we may assume that the vertex $R_s(u')$ occupied at time $s$ by a random walk initiated at $u'$ stands on the path between the vertices $R_s(u)$ and $R_s(w)$ occupied at time $s$ by random walks initiated at $u$ and $w$ respectively until it meets one of the endpoints of the path. Indeed, if $R_s(u')$ is an inner vertex of the unique path between $R_s(u)$ and $R_s(w)$ but $R_{s+1}(u')$ does not belong to the path between $R_{s+1}(u)$ and $R_{s+1}(w)$, then, by symmetry of $T$, we may find a vertex $z$ from the path between $R_{s+1}(u)$ and $R_{s+1}(w)$ such that replacing $R_{s+1}(u')$ with $z$ does not change the probability $P_t(T;u',w)$.\\ 

In the same way, if $u$ is a leaf, and $w$ is not, then
\begin{align*}
 P_{t+1}(T';\varphi(u),\varphi(w))\,\, 
 & = \,\,
\frac{k}{(k+1)^2}P_t(T';\varphi(u^-),\varphi(w^+))+
 \frac{1}{(k+1)^2}P_t(T';\varphi(u^-),\varphi(w^-))\\
 &< \,\, \frac{k}{(k+1)^2}P_t(T;u^-,w^+)+
 \frac{1}{(k+1)^2}P_t(T;u^-,w^-)\\
 &< \,\, \frac{k-1}{k^2}P_t(T;u^-,w^+)+
 \frac{1}{k^2}P_t(T;u^-,w^-)\,\,=\,\,P_{t+1}(T;u,w),
\end{align*}
since both $\frac{k}{(k+1)^2}$ and $\frac{1}{(k+1)^2}$ decrease with $k$.  Finally, if both $u$ and $w$ are leaves, then
\begin{align*}
 P_{t+1}(T';\varphi(u),\varphi(w)) \,\, 
 & = \,\,
 \frac{1}{(k+1)^2}P_t(T';\varphi(u^-),\varphi(w^-))\\
 & < \,\,
 \frac{1}{(k+1)^2}P_t(T;u^-,w^-) \,\, < \,\,
 \frac{1}{k^2}P_t(T;u^-,w^-) \,\, = \,\, P_{t+1}(T;u,w).
\end{align*}

2.  Let $(T,v)$ be a perfect $k$-ary tree of height $h$, and $k\geq 2$ is fixed. It is sufficient to prove that $p_2=p_2(h)$ decreases with $h$ and apply Theorem~\ref{th:NVM_main}. Clearly, $p_2=(1-\varepsilon)^2\frac{1}{k+1}+(1-\varepsilon)^2\frac{k}{k+1}f(h)$, where $f(h)$ is the probability that two terminable independent random walks on $T$ originated at $v$ and at a vertex at distance 2 from $v$ meet. It remains to prove that $f(h)$ decreases. Let $P(T;u,v)$ be the probability that two terminable random walks on $T$ originated at $u$ and $v$ meet. 

Extend $(T,v)$ to a perfect $k$-ary tree $(T',v)$ of height $h+1$ (we have $T\subset T'$). For two random walks $\bold{R}^1=(R^1_t:=R_t(u),t\geq 0)$, $\bold{R}^2=(R^2_t:=R_t(v),t\geq 0)$ on $T'$ initiated at vertices $u\in V(T)$ and $v$, let $\sigma\in\mathbb{N}\cup\{\infty\}$ be the first moment when $\bold{R}^1$ and $\bold{R}^2$ meet. Consider {\it truncated} versions of these two random walks denoted by $\bold{\tilde R}^1=(\tilde R_t^1,t\geq 0)$, $\bold{\tilde R}^2=(\tilde R_t^2,t\geq 0)$ and defined on $T$ recursively in a way (described below) such that, for every $t\leq\sigma$, both $\tilde R_t^1,\tilde R_t^2$ belong to the path between $R_t^1$ and $R_t^2$. Let $\tau^1\in\mathbb{N}\cup\{\infty\}$ and $\tau^2\in\mathbb{N}\cup\{\infty\}$ be the first moments when $\bold{R}^1$ and $\bold{R}^2$ reach a leaf of $T'$ respectively. Let $\gamma_t$ be the path between $R^1_t$ and $R^2_t$. For every $t=0,1,2,\ldots,\sigma$,
\begin{itemize}
\item for $j\in\{1,2\}$, if $t<\tau^j$, then set $\xi^j_t:=0$; 
\item for $j\in\{1,2\}$, if $t=\tau^j$, then set $\xi^j_t:=1$; 
\item for $j\in\{1,2\}$, if $t>\tau^j$, and at least one of the following two conditions holds: $\tilde R_{t-1}^j\neq R_{t-1}^j$ or $R^j_{t}$ is a leaf of $T'$, then $\xi^j_{t}=1$;
\item for $j\in\{1,2\}$, if $t>\tau^j$, $\tilde R_{t-1}^j=R_{t-1}^j$ and $R_{t}^j$ is not a leaf of $T'$, then $\xi_{t}^j=0$;
\item for $j\in\{1,2\}$, if $\xi^j_t=0$, then $\tilde R^j_t=R^j_t$;  
\item for $j\in\{1,2\}$, if $\xi_t^j=1$, then $\tilde R_t^j$ is obtained from $\tilde R_{t-1}^j$ by following the edge of $\gamma_t$ going from $\tilde R_{t-1}^j$ to its child in $T$ with probability $\frac{k}{k+1}$ and the edge of $\gamma_t$ going from $\tilde R_{t-1}^j$ to its parent with probability $\frac{1}{k+1}$ (if $\tilde R_{t-1}^j$ is a leaf of $T$, then $\bold{\tilde R}^j$ follows the unique edge of $\gamma_t$ in $T$ at time $t$).
\end{itemize}
Since, for every $t$, both $\tilde R_t^1,\tilde R_t^2$ belong to $\gamma_t$, we get that the probability $\tilde P(T;u,v)$ that $\bold{\tilde R}^1$ and $\bold{\tilde R}^2$ meet is strictly greater than $P(T';u,v).$ It remains to notice that $P(T;u,v)=\tilde P(T;u,v)$ due to the following claim.\\

\begin{claim}
For every $t$ and for a $k$-vertex set $U\subset V(T)$ consider a distribution ${\sf Q}_t(U)$ on $U$. Let $\bold{R}^{\sf Q}(u)$ and $\bold{R}^{\sf Q}(v)$ be two random walks on $T$ initiated at $u$ and $v$ respectively defined as follows. For every $t\geq 0$, let $\gamma^{{\sf Q}}_t$ be the path that joins $R_t^{{\sf Q}}(u)$ with $R_t^{{\sf Q}}(v)$. Then, with probability $\frac{1}{k+1}$, $R_{t+1}^{{\sf Q}}(u)=z$, where $z$ is the vertex adjacent to $R_t^{{\sf Q}}(u)$ in $\gamma^{{\sf Q}}_t$ which is closer to $R_t^{{\sf Q}}(v)$, or with probability $\frac{k}{k+1}$, $\bold{R}^{{\sf Q}}(u)$ moves to the neighbor of $R_t^{{\sf Q}}(u)$ according to the distribution ${\sf Q}_t(U)$, where $U$ is the set of all neighbors of $R_t^{{\sf Q}}(u)$ in $T$ other than $z$ (if $R_t^{{\sf Q}}(v)$ is a leaf, then $\bold{R}^{{\sf Q}}(u)$ follows the only possible edge). The move of $\bold{R}^{{\sf Q}}(v)$ at time $t+1$ is defined in the same way. Then the probability that $\bold{R}^{\sf Q}(u)$ meets $\bold{R}^{\sf Q}(v)$ equals $P(T;u,v)$.\\
\label{cl:monotone_path_length}
\end{claim}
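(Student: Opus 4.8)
The plan is to prove the claim by a coupling argument. I will run two independent terminable simple random walks on $T$ started at $u$ and $v$ and, on the same probability space, the two ${\sf Q}$-walks started at $u$ and $v$, in such a way that the SRW pair meets exactly when the ${\sf Q}$-walk pair does. Since $\mathsf{P}[\text{the SRW pair meets}]=P(T;u,v)$ by definition, this gives $\mathsf{P}[\text{the ${\sf Q}$-walk pair meets}]=P(T;u,v)$, which is the claim; applied to the distributions ${\sf Q}_t$ realized by the truncated walks of the preceding paragraph, it also yields the identity $\tilde P(T;u,v)=P(T;u,v)$ that was being used.

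The coupling will carry along a time-dependent automorphism $\phi_t\in\mathrm{Aut}(T)$ such that $\phi_t$ sends the SRW configuration $(R_t(u),R_t(v))$ to the ${\sf Q}$-walk configuration $(R^{{\sf Q}}_t(u),R^{{\sf Q}}_t(v))$ and sends the diagonal to the diagonal, so that the two pairs meet simultaneously; start with $\phi_0=\mathrm{id}$. The inductive step uses that each walk's move is either a ``toward'' move, made with probability $\tfrac1{k+1}$ to the unique neighbour on the path to the other walk, or an ``off-path'' move, made with probability $\tfrac{k}{k+1}$ to one of the remaining neighbours. I couple the ``toward/off-path'' decision and the termination coin trivially; for a ``toward'' move I keep $\phi_{t+1}=\phi_t$, since it commutes with $\phi_t$. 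For an ``off-path'' move I invoke the key structural fact about perfect $k$-ary trees: the off-path neighbours of a vertex that is not the peak of the path between the two walks are children whose pending subtrees are pairwise isomorphic, and any two of them are interchanged by an automorphism of $T$ fixing the rest of the tree --- hence fixing the other walk and every future move it makes. Postcomposing $\phi_t$ with the automorphism that carries the SRW's (uniformly chosen) off-path neighbour to the ${\sf Q}$-walk's (${\sf Q}_t$-chosen) off-path neighbour gives a $\phi_{t+1}$ with the required properties, and the induction closes.

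The hard part will be the configurations that fall outside ``a walk strictly below the peak enters one of its $k$ isomorphic child-subtrees'': when a walk sits at the peak of the path (i.e.\ is an ancestor of the other walk), or at the root, or at a leaf, the vertex degree is not $k+1$ and the off-path neighbours need not all be children. In each such case one has to compare, one at a time, the SRW's split of its mass $\tfrac{k}{k+1}$ over off-path neighbours with the split prescribed by ${\sf Q}_t$, check that every off-path destination is still reached by an automorphism fixing the other walk's region, and match the ``toward'' probability $\tfrac1{k+1}$ on both sides. This is exactly where the constant $\tfrac1{k+1}$ and the precise combinatorics of the perfect tree enter --- the degrees, the subtree isomorphism types, and the fact that the peak of the path is pinned by any automorphism that fixes both walks --- and it is where essentially all of the work of the proof lies.
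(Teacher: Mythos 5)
Your overall strategy --- couple the $({\sf Q},{\sf Q})$-pair with an independent pair of terminable simple random walks through a time-dependent automorphism of $T$ that exchanges isomorphic pendant subtrees --- is exactly the symmetry the paper appeals to (its entire proof of this claim is the sentence that it ``is straightforward due to the symmetry of $T$''), and your treatment of the generic step is sound: when a walk sits strictly below the peak of $\gamma_t^{\sf Q}$ at a vertex of degree $k+1$, the $k$ off-path neighbours are its children, any two of which are swapped by an automorphism of $T$ fixing everything outside that vertex's subtree, hence fixing the other walk and all of its future moves.

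The submission is nevertheless incomplete, and the gap is not a routine one. You explicitly defer the configurations in which a walk is at the peak of $\gamma_t^{\sf Q}$, at the root, or at a leaf, and these are precisely where the claim carries content and where your device fails. If $R_t^{\sf Q}(u)$ is the peak (an ancestor of $R_t^{\sf Q}(v)$) and not the root, its off-path neighbours are its \emph{parent} together with $k-1$ of its children; no automorphism of $T$ that fixes the other walk carries the parent to such a child (they lie at different heights of a tree of finite height, and the two resulting configurations have, in general, different continuation meeting probabilities). So ``postcomposing $\phi_t$ with an automorphism'' cannot match an arbitrary ${\sf Q}_t$ against the uniform law there; one needs a genuinely different ingredient --- for instance an induction on the time horizon showing the meeting probability is invariant under the relevant redistributions, or a verification that in the intended application ${\sf Q}_t$ never moves mass between the parent and a sibling subtree at the peak. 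The root is a further unresolved mismatch: it has degree $k$, so $U$ has only $k-1$ elements and the simple random walk assigns the ``toward'' move probability $\frac1k$ rather than $\frac{1}{k+1}$, so the two marginals do not even agree stepwise. Since you state that ``essentially all of the work of the proof lies'' in these cases and then stop, what you have is a correct plan with the decisive step missing, not a proof.
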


The proof is straightforward due to the symmetry of $T$. The proof of Theorem~\ref{th:trees} is completed.

\begin{remark}
It is possible to show that there exist distinct $k_1,k_2\geq 2$, distinct $h_1,h_2\geq 2$ and $\varepsilon>0$ such that perfect $k_1$-ary and $k_2$-ary trees of heights $h_1$ and $h_2$ respectively observed at their roots are not $S^{(2)}$-distinguishable. However, it is not hard to see that they are distinguishable for almost all $\varepsilon$ since the coefficients in front of $(1-\varepsilon)^2$ in $p_2$ are different.
\end{remark}

\section{Integer lattices}
\label{sc:lattices}

In this section, we observe NVM at a vertex of an infinite graph (namely, $\mathbb{Z}^d$). We show that observations of NVM may be used to reconstruct the dimension of $\mathbb{Z}^d$. 

Define an {\it $\varepsilon$-terminable} random walk $R_s$, $s\in\mathbb{Z}_+$, on $\mathbb{Z}^d$ as follows. It originates at some non-random point $R_0\in\mathbb{Z}^d$, and, for every positive $s$, $R_s$ either terminates with probability $\varepsilon$ or, with probability $1-\varepsilon$, moves to a neighbor of $R_{s-1}$ chosen uniformly at random. 

It is easy to see that 
\begin{equation}
p_2=\frac{(1-\varepsilon)^2}{2^d}+\frac{(1-\varepsilon)^2}{2^{2d}}\left(\sum_{\mathbf{r}\in\mathcal{R}_1}\mu(d,\mathbf{r})+2\sum_{\mathbf{r}\in\mathcal{R}_2}\mu(d,\mathbf{r})\right),
\label{eq:lattice_NVM_meet_prob}
\end{equation}
where 
\begin{itemize}
\item $\mathcal{R}_1=\mathcal{R}_1(d)$ is the set of all $\mathbf{r}=(r_1,\ldots,r_d)\in\mathbb{Z}^d$ such that, for some $i\in[d]$, $|r_i|=2$ and, for every other $j\in[d]$, $r_j=0$;
\item $\mathcal{R}_2=\mathcal{R}_2(d)$ is the set of all $\mathbf{r}=(r_1,\ldots,r_d)\in\mathbb{Z}^d$ such that, for some $i\neq j\in[d]$, $|r_i|=|r_j|=1$ and, for every other $\ell\in[d]$, $r_{\ell}=0$;
\item $\mu(d,\mathbf{r})$ is the probability that two independent $\varepsilon$-terminable random walks on $\mathbb{Z}^d$ that originate at points $0$ and $\mathbf{r}$ meet.
\end{itemize} 
Note that, if $d=1$, then $\mathcal{R}_2=\varnothing$, and so the respective summand vanishes.\\



In what follows, we prove that $p_2$ decreases with $d$. It immediately implies Theorem~\ref{thm:lattices} due to Theorem~\ref{th:NVM_main}.\\

For every $d\in\mathbb{N}$, consider two independent $\varepsilon$-terminable random walks $R_s^1(d)$, $R_s^2(d)$, $s\in\mathbb{Z}_+$, on $\mathbb{Z}^d$ with origins at 0 and $\mathbf{r}=\mathbf{r}(d)$ respectively. Let $\mu(d,\mathbf{r})$ be the probability that they meet before they terminate.

Lemma~\ref{lm:lattices} stated below, the equality (\ref{eq:lattice_NVM_meet_prob}) and the fact that $\mu(d,\mathbf{r})$ is stable with respect to permutations of the coordinates of $\mathbf{r}$ imply the statement of Theorem~\ref{thm:lattices} since 
\begin{align*}
 \sum_{\mathbf{r}\in\mathcal{R}_1(d+1)}\mu(d+1,\mathbf{r}) \,\, 
 &= \,\,
 \sum_{\mathbf{r}\in\mathcal{R}_1(d)}\mu(d+1,(\mathbf{r},0))+\mu(d+1,(\mathbf{0},2))+
 \mu(d+1,(\mathbf{0},-2))\\
 &\leq \,\, 2\sum_{\mathbf{r}\in\mathcal{R}_1(d)}\mu(d,(\mathbf{r},0))\,\,\leq\,\, 2\sum_{\mathbf{r}\in\mathcal{R}_1(d)}\mu(d,\mathbf{r}),
\end{align*}
and, for $d\geq 2$,
\begin{multline*}
 \sum_{\mathbf{r}\in\mathcal{R}_2(d+1)}\mu(d+1,\mathbf{r})=
 \sum_{\mathbf{r}\in\mathcal{R}_2(d)}\mu(d+1,(\mathbf{r},0))+\sum_{|\mathbf{r}|_1=1}[\mu(d+1,(\mathbf{r},1))+\mu(d+1,(\mathbf{r},-1))].
\end{multline*}
The cardinality $|\mathcal{R}_2(d)|$ is $\frac{d-1}{2}\geq\frac{1}{2}$ times bigger than the number of $(d+1)$-vectors $(\mathbf{r},1)$, $(\mathbf{r},-1)$ with $|\mathbf{r}|_1=1$. Therefore, by Lemma~\ref{lm:lattices} and by the symmetry of $\mu$, 
$$
\sum_{\mathbf{r}\in\mathcal{R}_2(d+1)}\mu(d+1,\mathbf{r})\leq 
3\sum_{\mathbf{r}\in\mathcal{R}_2(d)}\mu(d,\mathbf{r}).
$$
Summing up,
$$
p_2(d+1)\leq\frac{(1-\varepsilon)^2}{2^{d+1}}+\frac{(1-\varepsilon)^2}{2^{2d+2}}\left(2\sum_{\mathbf{r}\in\mathcal{R}_1(d)}\mu(d,\mathbf{r})+6\sum_{\mathbf{r}\in\mathcal{R}_2(d)}\mu(d,\mathbf{r})\right)<p_2(d),
$$
completing the proof of Theorem~\ref{thm:lattices}.
 


\begin{lemma}
Consider $\mathbf{r}=(r_1,\ldots,r_d)\in\mathbb{Z}^d$ such that $|\mathbf{r}|_1=|r_1|+\ldots+|r_d|=2$. Then $\mu(d,\mathbf{r})\geq\mu(d+1,(\mathbf{r},0))$.\\
\label{lm:lattices}
\end{lemma}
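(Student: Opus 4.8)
The statement involves only the meeting probabilities $\mu$, so the whole argument stays inside $\mathbb{Z}^{d}$ and $\mathbb{Z}^{d+1}$. Throughout set $\beta=(1-\varepsilon)^2$; recall that two walks meet exactly when their difference hits $\mathbf{0}$, and that while both walks are alive (which at each step happens with probability $\beta$, independently of the moves) this difference is a random walk from $\mathbf r$ killed with probability $1-\beta$ per step. The plan has three steps.

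\emph{Step 1: project to a lazy walk on $\mathbb{Z}^d$.} Run the $\varepsilon$-terminable walks $R^1,R^2$ on $\mathbb{Z}^{d+1}$ from $\mathbf 0$ and $(\mathbf r,0)$ and look at the projections $\bar R^1,\bar R^2$ onto the first $d$ coordinates. At each step a walk dies with probability $\varepsilon$, and otherwise its projection either performs a uniform nearest-neighbour step of $\mathbb{Z}^d$ (probability $\tfrac{d}{d+1}$) or stays put (probability $\tfrac1{d+1}$, when the walk stepped in the last coordinate). So $\bar R^1,\bar R^2$ are two independent $\varepsilon$-terminable \emph{lazy} walks on $\mathbb{Z}^d$ with laziness $q:=\tfrac1{d+1}$, from $\mathbf 0$ and $\mathbf r$, and since $R^1_s=R^2_s$ forces $\bar R^1_s=\bar R^2_s$ we get $\mu(d+1,(\mathbf r,0))\le\mu^{\mathrm{lazy}}_q(d,\mathbf r)$, the meeting probability of these lazy walks. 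It then suffices to show that laziness can only hurt: $\mu^{\mathrm{lazy}}_q(d,\mathbf r)\le\mu(d,\mathbf r)$. Here is where the genuine difficulty sits: the difference of two \emph{lazy} walks acquires one-coordinate steps $\pm\mathbf e_i$ (one walk moves, the other sleeps) that the non-lazy difference walk never makes, so the meeting event cannot be pushed through by a naive step-by-step coupling; I would handle this in Step 3.

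\emph{Step 2: strip the extra laziness at the price of faster killing.} The difference $D$ of the two lazy walks, while both alive, stays with probability $q^2$, makes a uniform step $\pm\mathbf e_i$ with probability $2q(1-q)$, and with probability $(1-q)^2$ makes a step distributed as the difference of two independent uniform nearest-neighbour steps of $\mathbb{Z}^d$ (call this a \emph{type-$0$ step}). Thus $\mu^{\mathrm{lazy}}_q(d,\mathbf r)$ is the probability that $D$, killed at rate $1-\beta$, reaches $\mathbf 0$ from $\mathbf r$. The extra "stay" probability $q^2$ is just i.i.d.\ laziness inserted into the non-lazy walk $\tilde D$ that makes a uniform step $\pm\mathbf e_i$ with probability $\tfrac{2q}{1+q}$ and a type-$0$ step with probability $\tfrac{1-q}{1+q}$; inserting i.i.d.\ "stay" steps and compensating by a higher killing rate leaves a killed hitting probability unchanged, so $\mu^{\mathrm{lazy}}_q(d,\mathbf r)=\mathsf{P}_{\mathbf r}[\tilde D\text{ reaches }\mathbf 0\text{ before killed}]$, where $\tilde D$ is killed at rate $1-\beta'$ with $\beta'=\tfrac{\beta(1-q^2)}{1-\beta q^2}<\beta$.

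\emph{Step 3: pair up the one-coordinate steps.} A type-$0$ step has the law of the \emph{sum} of two independent uniform $\pm\mathbf e_i$ steps (since negating a uniform step gives a uniform step). So couple $\tilde D$ so that its one-coordinate steps are i.i.d.\ uniform $\pm\mathbf e_i$ vectors, and build a walk $W$ from $\mathbf r$: read the steps of $\tilde D$ up to the killing time, carrying at most one pending one-coordinate step; emit a move of $W$ equal to each type-$0$ step of $\tilde D$ as it occurs, and a move of $W$ equal to the sum of two consecutive one-coordinate steps whenever a pair completes. Then (i) every move of $W$ is an independent type-$0$ step; (ii) the number $K_W$ of moves of $W$ made before killing depends only on the step-\emph{types} of $\tilde D$ and the killing time, hence is independent of the moves of $W$, and $K_W$ is at most the number of steps of $\tilde D$ before killed, hence is stochastically dominated by a geometric $N$ with $\mathsf{P}(N\ge k)=\beta^k$ (using $\beta'<\beta$); (iii) $W$ and $\tilde D$ agree whenever no one-coordinate step is pending, which holds whenever $\tilde D=\mathbf 0$, because $\mathbf 0$ and $\mathbf r$ lie in the same coset of the even sublattice (as $|\mathbf r|_1=2$) and each one-coordinate step switches cosets, so $\tilde D=\mathbf 0$ forces an even number of one-coordinate steps so far. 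By (iii), $\{\tilde D\text{ reaches }\mathbf 0\text{ before killed}\}\subseteq\{W\text{ reaches }\mathbf 0\text{ within }K_W\text{ moves}\}$, and by (i)–(ii) the probability of the latter is at most $\mathsf{P}_{\mathbf r}[\text{killed type-$0$ walk reaches }\mathbf 0]=\mu(d,\mathbf r)$. Chaining the three steps yields $\mu(d+1,(\mathbf r,0))\le\mu^{\mathrm{lazy}}_q(d,\mathbf r)\le\mu(d,\mathbf r)$, which is the lemma (and the argument works verbatim for any $\mathbf r$ of even $\ell^1$-norm, which is all that is used around it).
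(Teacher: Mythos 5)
Your proof is correct, and it takes a genuinely different route from the paper's. The paper first interleaves the two walks into a single killed nearest-neighbour walk on $\mathbb{Z}^{d}$ (resp.\ $\mathbb{Z}^{d+1}$) and then constructs an explicit surjection $\varphi$ from $(d+1)$-dimensional trajectories onto $d$-dimensional ones that merges the first and last coordinates (reordering the increments so that first-coordinate increments come first); the inequality is obtained by counting preimages to show that the pushforward of the $(d+1)$-dimensional trajectory measure is dominated pointwise by the $d$-dimensional one, together with the observation that $\varphi$ preserves visits to $\mathbf 0$. You instead drop the last coordinate, absorb the resulting laziness into a strictly smaller survival parameter $\beta'<\beta$, and then repair the step distribution of the difference walk by pairing consecutive single-coordinate increments into genuine ``difference-of-two-moves'' increments; the coset/parity observation (using that $|\mathbf r|_1$ is even) guarantees that a visit to $\mathbf 0$ can only occur when no increment is pending, so the repaired walk $W$ inherits the visit, and the stochastic domination $K_W\preceq\mathrm{Geom}(\beta)$ closes the argument. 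Your version trades the paper's trajectory-level combinatorics and measure-domination computation for a cleaner sequence of standard probabilistic reductions, and it makes explicit where the evenness of $|\mathbf r|_1$ is used (the paper uses it only implicitly, through the fact that the last coordinate of the starting point is $0$); the paper's single map $\varphi$ is more compact but requires verifying both the preimage count and that the reordering does not destroy visits to $\mathbf 0$.
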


\begin{proof} Fix $d\in\mathbb{N}$. Define a `combined' random walk $\tilde R(d)$ on $\mathbb{Z}^d$ in the following way: for $s\in\mathbb{Z}_+$, if both $R_s^1(d)$ and $R_s^2(d)$ do not terminate, then set
\begin{itemize}
\item $\tilde R_{2s}(d):=R_s^1(d)-R_s^2(d)$,
\item $\tilde R_{2s+1}(d):=R_{s}^1(d)-R_{s+1}^2(d)$.
\end{itemize}
Let $s$ be the minimum time when either $R^1(d)$ or $R^2(d)$ terminates. Then $\tilde R(d)$ terminates at time $2s-1$. Clearly, $\mu(d,\mathbf{r})$ is exactly the probability that $\tilde R(d)$ reaches 0 before it terminates. 

Let $\mathcal{B}_s(d)$ be the event saying that $\tilde R(d)$ terminates at time $2s-1$ and reaches 0 before the time $2s-1$. 
We are going to show that there is a coupling $(\tilde R(d),\tilde R(d+1))$ such that $\mathcal{B}_s(d+1)\subset\mathcal{B}_s(d)$ for all $s$. It immediately implies that
\begin{equation}
 \mu(d,\mathbf{r})=\sum_s{\sf P}[\mathcal{B}_s(d)]\geq\sum_s{\sf P}[\mathcal{B}_s(d+1)]=\mu(d+1,(\mathbf{r},0)).
\label{eq:mu_monotone}
\end{equation}

Let $\omega=(\omega_t,\, t\leq 2s-2)$ be a trajectory in $\mathbb{Z}^{d+1}$ originated at $(\tilde R_0,0)$ of length $2s-2$.  
Consider a trajectory $\varphi(\omega)$ in $\mathbb{Z}^d$ defined as follows. Let $t_1<\ldots<t_b$ be all the time moments when either the first or the last coordinate of $\omega$ changes. Replace them in a way such that the moments that relate to the first coordinate go first and keep the order of moments within the same coordinate. After that, merge these two coordinates. More formally, let $\tau_1<\ldots<\tau_a$ be the time moments when the first coordinate of $\omega$ is changed, and $\sigma_1<\ldots<\sigma_{b-a}$ be the time moments when the last coordinate is changed. Set the $j$th coordinate of $\varphi_t(\omega)$ (denoted by $\varphi_t^j(\omega)$) equal to the $j$th coordinate of $\omega_t$ (denoted by $\omega_t^j$) if $j\in\{2,\ldots,d\}$. Let $\varphi^1(\omega)$ change exactly at $t_1,\ldots,t_b$, and
$$
\varphi^1_{t_{\ell}}(\omega)=\omega^1_{\tau_{\ell}},\,\,\ell\in[a],\quad
\varphi^1_{t_{a+\ell}}(\omega)=\omega^1_{\tau_a}+\omega^{d+1}_{\sigma_{\ell}},\,\,\ell\in[b-a].
$$

Let us denote by $\mathcal{T}_s(d)$ the set of all trajectories in $\mathbb{Z}^d$ originated at $\tilde R_0$ of length $2s-2$. Note that $\varphi$ is a surjection from $\mathcal{T}_s(d+1)$ to $\mathcal{T}_s(d)$ such that, for every trajectory $\omega^*\in\mathcal{T}_s(d)$, 
$$
{\sf P}\left[\tilde R_{\leq 2s-2}(d+1)\in\varphi^{-1}(\omega^*)\right]
=\frac{(1-\varepsilon)^{2s-2}2^b}{2^{(d+1)(2s-2)}}
\leq\frac{(1-\varepsilon)^{2s-2}}{2^{d(2s-2)}}
={\sf P}\left[\tilde R_{\leq 2s-2}(d)=\omega^*\right],
$$
where $b\leq 2s-2$ is the number of times when the first coordinate of $\omega^*$ changes.

Letting $\mathcal{T}^0_s(d)\subset\mathcal{T}_s^d$ to be the set of those trajectories that meet $0$, we get that, for every $\omega\in\mathcal{T}^0_s(d+1)$, $\varphi(\omega)\in\mathcal{T}^0_s(d)$, but the opposite is not necessarily true. Therefore,
\begin{align*}
 {\sf P}(\mathcal{B}_s(d)) \,\, 
 & = \,\, \sum_{\omega^*\in\mathcal{T}_s^0(d)}
 {\sf P}\left(\tilde R_{\leq 2s-2}(d)=\omega^*,\,\,\tilde R_{2s-1}(d)\text{ terminates}\right)\\
 &= \,\, (1-(1-\varepsilon)^2)\sum_{\omega^*\in\mathcal{T}_s^0(d)}
 {\sf P}\left(\tilde R_{\leq 2s-2}(d)=\omega^*\right)\\
 &\geq \,\, (1-(1-\varepsilon)^2)\sum_{\omega^*\in\mathcal{T}_s^0(d)}
 {\sf P}\left[\tilde R_{\leq 2s-2}(d+1)\in\varphi^{-1}(\omega^*)\right]\\
 &\geq \,\, (1-(1-\varepsilon)^2)\sum_{\omega\in\mathcal{T}_s^0(d+1)}
 {\sf P}\left(\tilde R_{\leq 2s-2}(d+1)=\omega\right) \,\,= \,\,{\sf P}(\mathcal{B}_s(d+1))
\end{align*}
implying (\ref{eq:mu_monotone}).
\end{proof}

\section{Insufficiency of $p_d$: complete bipartite graphs}
\label{sc:bip}

\begin{theorem}
Let $u_1,u_2$ be vertices of parts of sizes $n_1$ and $n_2$ of  $G_1=K_{n_1,m_1}$ and $G_2=K_{n_2,m_2}$ respectively and $(n_1,m_1)\neq(n_2,m_2)$. Then 
\begin{enumerate}
\item if $n_1=n_2=1$, then $(G_1,u_1)$ and $(G_2,u_2)$ are not NVM-distinguishable;
\item if $n_1=n_2>1$ or $m_1=m_2$, then $(G_1,u_1)$ and $(G_2,u_2)$ are distinguishable by $S^{(2)}$;
\item if $(1-\varepsilon)^2$ is not a root of a polynomial of degree 3 with integer coefficients, and $\max\{n_1,n_2\}>1$, then $(G_1,u_1)$ and $(G_2,u_2)$ are distinguishable by $S^{(2)}$;
\item for every $k$ and every $\varepsilon'\in(0,1)$, there exist $\varepsilon\in(0,\varepsilon')$, distinct $n_1,n_2>k$ and distinct $m_1,m_2>k$ such that $(G_1,u_1)$ and $(G_2,u_2)$ are NVM-distinguishable but $p_d(G_1,u_1)=p_d(G_2,u_2)$ for any $d\in\mathbb{N}$.
\end{enumerate}
\label{th:bip}
\end{theorem}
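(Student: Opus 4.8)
The plan is to run all four parts off one explicit formula: the value of $p_2(K_{n,m},u)$ when $u$ lies in the part of size $n$. Since $K_{n,m}$ is bipartite, $\pi_t$ and $\pi_{t+d}$ lie in different parts at every common time when $d$ is odd, so $p_d=0$ for odd $d$; and for $d=2e$ the path $\pi_{t+2e}$ can meet $\pi_t$ only if it survives its first $2e$ steps (probability $(1-\varepsilon)^{2e}$), after which at time $t$ it occupies a uniformly random vertex of the part of $u$, independent of $u$. Writing $\alpha=(1-\varepsilon)^2$ and $g(n,m)$ for the probability that two independent $\varepsilon$-terminable random walks started at distinct vertices of the size-$n$ part ever meet, this gives $p_{2e}=\alpha^{e}\bigl(\tfrac1n+\tfrac{n-1}{n}g(n,m)\bigr)=\alpha^{e-1}p_2$, and a two-state recursion for $g$ (the state recording which side the two walks are on and whether they coincide) yields, after the $\alpha^{2}$-terms cancel,
\[
 p_2(K_{n,m},u)=\frac{\alpha\bigl(m+\alpha(n-1)\bigr)}{mn-\alpha^2(m-1)(n-1)}.
\]
In particular $p_2(K_{1,m},u)=\alpha$, free of $m$. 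For part (1) I would strengthen this: observed at its center, the process $(X_t(u))_t$ on a star obeys $X_t=A_tX_{t-2}+(1-A_t)Z_t$ with independent $A_t\sim\mathrm{Bern}(\alpha)$ and $Z_t\sim\mathrm{Bern}(1/2)$ (the chosen leaf enters only through an independent noise/copy coin), so its whole law is independent of $m$; two processes with identical laws cannot be distinguished by any statistic.

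For parts (2) and (3), by Theorem~\ref{th:NVM_main} it suffices to find $d$ with $p_d(G_1,u_1)\neq p_d(G_2,u_2)$, and I take $d=2$. For fixed $n>1$, $p_2$ is a Möbius function of $m$ whose derivative in $m$ has the sign of $-\bigl((n-1)\alpha^2+\alpha-n\bigr)=-(\alpha-1)\bigl((n-1)\alpha+n\bigr)$, which is nonzero for $\alpha\in(0,1)$; hence $p_2$ is strictly monotone in $m$, and symmetrically strictly monotone in $n$ for fixed $m$, which gives (2). For (3), clearing denominators in $p_2(G_1,u_1)=p_2(G_2,u_2)$ produces $P(\alpha)=0$ for an integer polynomial $P$ of degree at most $3$; its constant term is $m_1m_2(n_2-n_1)$, and, once $n_1=n_2$, its linear coefficient is $(n_1-1)n_1(m_2-m_1)$, so $P\equiv0$ forces $(n_1,m_1)=(n_2,m_2)$ as soon as $\max\{n_1,n_2\}>1$. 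Thus under the hypotheses $P$ is a nonzero integer polynomial of degree $\le 3$ having $\alpha$ as a root — contradiction.

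For part (4), first observe that by the computation above, $p_d(G_1,u_1)=p_d(G_2,u_2)$ for every $d$ is \emph{equivalent} to $P(\alpha)=0$. Since two immortal walks on a finite graph meet almost surely, $p_2=1$ at $\alpha=1$, so $P(1)=0$ identically and $P(\alpha)=(\alpha-1)Q(\alpha)$ with $Q$ a quadratic integer polynomial in $\alpha$ depending on $(n_1,m_1,n_2,m_2)$. The plan is to choose distinct $n_1,n_2>k$ and distinct $m_1,m_2>k$ so that $Q$ vanishes at some $\alpha^*\in\bigl((1-\varepsilon')^2,1\bigr)$ and set $\varepsilon=1-\sqrt{\alpha^*}\in(0,\varepsilon')$; this is a mild constraint — one equation in four integer unknowns, with ample slack because $p_2(K_{n,m},u)\to0$ as $n,m\to\infty$ — and can be realized by an explicit subfamily (for instance, also imposing $Q(1)=0$, so that the surviving root is $-\gamma_0/\gamma_3=\dfrac{m_1m_2(n_1-n_2)}{(n_1-1)(n_2-1)(m_1-m_2)}$, and then tuning the parameters to push this ratio into the required window). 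For such $(n_i,m_i,\varepsilon)$ every $S^{(d)}$ fails to distinguish $G_1$ from $G_2$ and Theorem~\ref{th:NVM_main} is inapplicable, so a different statistic is needed.

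To separate $G_1$ and $G_2$ I would invoke Theorem~\ref{th:NVM_aux} with the $4$-point statistic $S^{(d_1,d_2,d_3)}$: it suffices to find even $d_1<d_2<d_3$ with $q_{d_1,d_2,d_3}(G_1,u_1)\neq q_{d_1,d_2,d_3}(G_2,u_2)$, and since all the $p$-terms in the displayed formula for $q$ already agree, this reduces to separating the triple-meeting quantities $p_{d_1,d_2,d_3}$ and $\tilde p_{d_1,d_2,d_3},\tilde p_{d_2,d_1,d_3},\tilde p_{d_3,d_1,d_2}$; a convenient choice is $(d_1,d_2,d_3)=(2,4,6)$. On $K_{n,m}$ these are controlled by four walks — $\pi_t$ together with $\pi_{t+d_1},\pi_{t+d_2},\pi_{t+d_3}$, the last three surviving their initial runs with independent probabilities $\alpha,\alpha^2,\alpha^3$ and then starting from independent uniform vertices of the part of $u$. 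The hard part, and the main obstacle in the whole theorem, is \emph{coalescence}: two of the paths $\pi_{t+d_i}$ can merge before time $t$ (with probability $1/n$ or $1/m$ at each passage through a common start time), so the configuration at time $t$ is not i.i.d.\ uniform; one must set up the finite Markov chain on the coincidence pattern of the four paths, solve it to obtain $p_{2,4,6}$ and the $\tilde p$'s as explicit rational functions of $\alpha,n,m$, and then check that the extra equation $q_{2,4,6}(G_1,u_1)=q_{2,4,6}(G_2,u_2)$ is incompatible with $p_2(G_1,u_1)=p_2(G_2,u_2)$ at the parameters fixed above (or, more robustly, choose those parameters to satisfy the $p_2$-relation while manifestly violating the $q_{2,4,6}$-relation). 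Carrying out this verification — showing that the pair $\bigl(p_2,q_{2,4,6}\bigr)$ already pins down $(n,m)$ at the special value $\alpha=\alpha^*$ — is the crux of part (4).
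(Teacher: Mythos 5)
Your treatment of parts (1)--(3) is correct and essentially coincides with the paper's: the closed form $p_2=\frac{\alpha(m+(n-1)\alpha)}{nm-\alpha^2(m-1)(n-1)}$ is exactly the paper's $p(n,m,\varepsilon)$, the star argument (phrased as the recursion $X_t=A_tX_{t-2}+(1-A_t)Z_t$ rather than the paper's induction on the joint law) is a valid equivalent, the monotonicity argument for part (2) matches the paper's computation of $\partial p/\partial m$ and $\partial p/\partial n$ (note your stated sign of the $m$-derivative is flipped --- it is negative --- but only nonvanishing matters), and your degree-$3$ polynomial $P$ with constant term $m_1m_2(n_2-n_1)$ and, at $n_1=n_2$, linear coefficient $(n_1-1)n_1(m_2-m_1)$ is precisely the paper's $Ax^3+Bx^2+Cx+D$.

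For part (4), however, what you have is a plan rather than a proof, and the two halves you leave open are exactly where all the work lies. First, the existence of admissible $(n_1,m_1,n_2,m_2,\varepsilon)$ with $\varepsilon<\varepsilon'$ and $P(\alpha)=0$ is only gestured at (``one equation in four integer unknowns, with ample slack... tuning the parameters''); the paper makes this concrete by taking a rational $\alpha\in(0,\varepsilon')$, setting $n_2=(1-\alpha)n_1$, $m_2=(1-\alpha)n_2$, $m_1=(1-\alpha)m_2$ (so that $m_1n_1=m_2n_2$ and $(m_2-m_1)/(n_1-n_2)=(1-\alpha)^2$, forcing $p(n_1,m_1,\alpha)\sim p(n_2,m_2,\alpha)$), and then applying the intermediate value theorem in $\varepsilon$ near $\alpha$ for $n_1$ large. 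Second, and more seriously, the claim that $S^{(2,4,6)}$ distinguishes the resulting pairs is explicitly deferred (``the crux of part (4)''). This cannot be waved through: as the paper observes, the three $\tilde p$'s are each asymptotically equal to products of $p_d$'s, hence asymptotically equal on $G_1$ and $G_2$, and $p_{2,4,6}$ is of strictly smaller order than the $\tilde p$'s; so one must extract and compare the \emph{second-order} terms of $q_{2,4,6}$ on both graphs. The paper does this by computing $p_2^2-\tilde p_{2,4,6}$, $(p_4^2-\tilde p_{4,2,6})+(p_4p_6-\tilde p_{6,2,4})$ and the full asymptotics of $p_{2,4,6}$ as explicit rational functions, substituting the parametrization above, and checking that the two sides of the resulting asymptotic identity are polynomials in $\alpha$ with different coefficients of $\alpha^2$ ($2384$ versus $-2904$). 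Without carrying out (or replacing) this computation, part (4) is not established; everything up to that point only shows that Theorem~\ref{th:NVM_main} fails for these pairs, not that they are distinguishable.
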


\proof As usual, to prove parts 2 and 3 of the theorem, we will apply Theorem~\ref{th:NVM_main}. To derive NVM-distinguishability in the last part of the theorem, we are going to apply Theorem~\ref{th:NVM_aux}.

 Let $G=K_{n,m}$ have parts $V_1$ of size $n$ and $V_2$ of size $m$. Let $u$ be a vertex of $V_1$. Let us find $p_2$. Let us fix positive integers $1\leq k\leq t$ and a sequence of vertices $u_1,\ldots,u_k=u$ of $G$ such that, for every $i\in[k-1]$, $u_{i+1}$ and $u_i$ belong to different parts. Let $\mathcal{B}_t(u_1,\ldots,u_{k-1})$ be the event saying that egdes $((u_{i+1},t-k+i+1),(u_i,t-k+i)),$ $i\in[k-1]$, belong to $\mathcal{D}_G$. Let $\mathcal{A}_{t,k}(u_1,u)$ be the event saying that $\pi_t$ meets $\pi_{t+2}$ for the first time at point $(u_1,t-k+1)$. Note that every path of length $k+1$ from $(u,t+2)$ to $(u_1,t-k+1)$ that does not meet points $(u_{i+1},t-k+i+1)$, $i\in[k-1]$, has probability $\frac{(1-\varepsilon)^{k+1}}{n^{\lfloor(k+1)/2\rfloor}m^{\lceil(k+1)/2\rceil}}$, and there are $(n-1)^{\lceil(k+1)/2\rceil-1}(m-1)^{\lfloor(k+1)/2\rfloor-1}m$ such paths. Therefore,
$$
 {\sf P}\biggl(\mathcal{A}_{t,k}(u_1,u)\,\biggl|\,\mathcal{B}_t(u_1,\ldots,u_{k-1})\biggr)=(1-\varepsilon)^{k+1}\frac{(n-1)^{\lceil(k+1)/2\rceil-1}(m-1)^{\lfloor(k+1)/2\rfloor-1}}{n^{\lfloor(k+1)/2\rfloor}m^{\lceil(k+1)/2\rceil-1}}.
$$
Then
\begin{align*}
 p_2 \,\, 
 & = \,\, \lim_{t\to\infty}\sum_{k=1}^{t}\sum_{u_1,\ldots,u_{k-1}}(1-\varepsilon)^{k+1}\frac{(n-1)^{\lceil\frac{k+1}{2}\rceil-1}(m-1)^{\lfloor\frac{k+1}{2}\rfloor-1}}{n^{\lfloor\frac{k+1}{2}\rfloor}m^{\lceil\frac{k+1}{2}\rceil-1}}{\sf P}(\mathcal{B}_t(u_1,\ldots,u_{k-1}))\\
 & = \,\, \sum_{k=1}^{\infty}(1-\varepsilon)^{2k}
 \frac{(n-1)^{\lceil\frac{k+1}{2}\rceil-1}(m-1)^{\lfloor\frac{k+1}{2}\rfloor-1}}{n^{\lfloor\frac{k+1}{2}\rfloor}m^{\lceil\frac{k+1}{2}\rceil-1}}\\
 &= \,\, \frac{(1-\varepsilon)^2(m+(n-1)(1-\varepsilon)^2)}{nm-(n-1)(m-1)(1-\varepsilon)^4} 
 \,\,=: \,\, p(n,m,\varepsilon).
 \end{align*}
Note that, for every even $d\geq 4$, $p_d=(1-\varepsilon)^{d-2}p(n,m,\varepsilon)$, and, for every odd $d$, $p_d=0$.\\
 
First, if $n=1$, then $p_2=(1-\varepsilon)^2$ does not depend on $m$. Let us show that, if $n=1$, and $m_1\neq m_2$, then NVM does not distinguish between $(G_1=K_{1,m_1},u_1)$ and $(G_2=K_{1,m_2},u_2)$, where $u_j$ is the central vertex of $G_j$, $j\in\{1,2\}$. In other words, we need to show that, for every $t\in\mathbb{Z}_+$, the vectors $(X_i(G_j,u_j),i\leq t)$, $j\in\{1,2\}$, are equal in distribution. Let us prove it by induction on $t$. For $t=0$ it is straightforward by the definition of the model. Assume that, for some $t\in\mathbb{Z}_+$, the equality in distribution is true. For convenience, assume that $[m_j]$ is the set of leaves of $G_j$. For $\mathbf{x}=(x_1,\ldots,x_t,x_{t+1}=x)\in\{0,1\}^{t+1}$ and $j\in\{1,2\}$,
\begin{align*}
 {\sf P}\left(\bigcap\limits_{i\leq t+1}X_i(G_j,u_j)=x_i\right) \,\, 
 & = \,\,
 {\sf P}(X_{t+1}(G_j,u_j)=x|X_i(G_j,u_j)=x_i,\,i\leq t)\\
 & \quad\quad\quad\quad\quad\quad\quad\quad\quad\quad 
 \times{\sf P}(X_i(G_j,u_j)=x_i,\,i\leq t)\\
 & = \,\, \left(\frac{\varepsilon}{2}+\sum_{v=1}^{m_j}
 \frac{(1-\varepsilon){\sf P}(X_t(G_j,v)=x|X_i(G_j,u_j)=x_i,\,i\leq t)}{m_j}\right)\\
 & \quad\quad\quad\quad\quad\quad\quad\quad\quad\quad 
 \times{\sf P}(X_i(G_j,u_j)=x_i,\,i\leq t).
\end{align*}
Clearly, for every $v\in[m_j]$,
$$
 {\sf P}(X_t(G_j,v)=x_{t+1}|X_i(G_j,u_j)=x_i,\,i\leq t)=\frac{\varepsilon}{2}+(1-\varepsilon)I(x_{t+1}=x_{t-1}).
$$
It immediately follows that ${\sf P}(X_i(G_1,u_1)=x_i,\,i\leq t+1)={\sf P}(X_i(G_2,u_2)=x_i,\,i\leq t+1)$.\\

Second, assume that $n_1=n_2=n>1$ and $m_1\neq m_2$. Note that
\begin{align*}
 \frac{\partial p(n,m,\varepsilon)}{\partial m} \,\, 
 & = \,\, (1-\varepsilon)^2 \,\, \times \\
 &\times\,\, \frac{nm-(1-\varepsilon)^4(n-1)(m-1)-(n-(n-1)(1-\varepsilon)^4)(m+(n-1)(1-\varepsilon)^2)}{(nm-(n-1)(m-1)(1-\varepsilon)^4)^2}\\
 & = \,\, -(1-\varepsilon)^4\frac{(n-1)((n-1)(1-(1-\varepsilon)^4)+1-(1-\varepsilon)^2)}{(nm-(n-1)(m-1)(1-\varepsilon)^4)^2} \,\, < \,\, 0
 \end{align*}
 for all $m$. Therefore, the values of $p_2$ are all distinct.
 
In the same way, if $m_1=m_2=m$, then the statement immediately follows from the fact that  
$$
\frac{\partial p(n,m,\varepsilon)}{\partial n}=-(1-\varepsilon)^2\frac{m(1-(1-\varepsilon)^2)(m(1+(1-\varepsilon)^2)-(1-\varepsilon)^2)}{(nm-(n-1)(m-1)(1-\varepsilon)^4)^2}<0
$$ 
as well.\\
 
Now, let us show that, for every $k$ and every $\varepsilon'>0$, there exist $\varepsilon\in(0,\varepsilon')$, distinct $n_1,n_2>k$ and distinct $m_1,m_2>k$ such that $p_2(G_1,u_1)=p_2(G_2,u_2)$ for a vertex $u_1$ of the $n_1$-part of $K_{n_1,m_1}$ and a vertex $u_2$ of the $n_2$-part of $K_{n_2,m_2}$. 

Fix $\varepsilon'>0$. Choose a rational $\alpha\in(0,\varepsilon')$. Let us choose integer $n_1>n_2>m_2>m_1>k$ such that $n_2=(1-\alpha)n_1$, $m_2=(1-\alpha)n_2$, $m_1=(1-\alpha)m_2$. Note that $m_1n_1=m_2n_2$ and $\frac{m_2-m_1}{n_1-n_2}=(1-\alpha)^2$. It immediately implies that $p(n_1,m_1,\alpha)\sim p(n_2,m_2,\alpha)$ ($m_1\to\infty$). Then, for $\varepsilon_1\in(0,\alpha)$, $\varepsilon_2\in(\alpha,\varepsilon')$ and $n_1$ large enough, we get $p(n_1,m_1,\varepsilon_1)> p(n_2,m_2,\varepsilon_1)$ and $p(n_1,m_1,\varepsilon_2)< p(n_2,m_2,\varepsilon_2)$. Since $p(n_2,m_2,\varepsilon)-p(n_1,m_1,\varepsilon)$ is continuous in $\varepsilon$, we get that there exists an $\varepsilon\in(\varepsilon_1,\varepsilon_2)$ such that $p(n_1,m_1,\varepsilon)=p(n_2,m_2,\varepsilon)$ as needed.

Also it is obvious that, if $p(n_1,m_1,\varepsilon)=p(n_2,m_2,\varepsilon)$ for some $n_1\neq n_2$, $m_1\neq m_2$, then $(1-\varepsilon)^2$ is a root of a polynomial of degree 3 with integer coefficients. Indeed, $p(n_1,m_1,\varepsilon)=p(n_2,m_2,\varepsilon)$ if and only if $(1-\varepsilon)^2$ is a root of $Ax^3+Bx^2+Cx+D$, where
$$
 A=(n_1-1)(n_2-1)(m_1-m_2),\quad
 B=m_2(n_1-1)(m_1-1)-m_1(n_2-1)(m_2-1),
$$
$$ 
 C=(n_1-1)m_2n_2-(n_2-1)m_1n_1,\quad 
 D=m_1m_2n_2-m_2m_1n_1.
$$
If $\max\{n_1,n_2\}>1$, then this polynomial becomes trivial only when $n_1=n_2$ and $m_1=m_2$.\\

Finally, assume that $m_1=(1-\alpha)m_2=(1-\alpha)^2n_2=(1-\alpha)^3n_1$, where $\alpha\in(0,\varepsilon')$ is a rational number as above, and $n_1$ is sufficiently large. We are going to distinguish $(G_1=K_{n_1,m_1},u_1)$ and $(G_2=K_{n_2,m_2},u_2)$ via $S^{(2,4,6)}$ for infinitely many tuples $(m_1,n_1,m_2,n_2,\varepsilon)$ that violate the $S^{(2)}$-distinguishibility. Due to Theorem~\ref{th:NVM_aux}, it finishes the proof of Theorem~\ref{th:bip}.

Fix $u\in V_1$, where $V_1$ is the part of size $n$ of $G=K_{n,m}$, and $n,m$ are some positive integers. It is easy to see that $\tilde p_{2,4,6}=(p_2)^2(1+o(1))$, $\tilde p_{4,2,6}=(p_4)^2(1+o(1))$, $\tilde p_{6,2,4}=p_6p_2(1+o(1))$. Indeed, 
$$
\tilde p_{2,4,6}={\sf P}(\pi_{t+4}\cap\pi_{t+6}\neq\varnothing,\pi_{t+4}\cap\pi_{t+2}=\varnothing|\pi_t\cap\pi_{t+2}\neq\varnothing){\sf P}(\pi_t\cap\pi_{t+2}\neq\varnothing),
$$
and
$$
 p_2\left(1-O\left(\frac{1}{n}+\frac{1}{m}\right)\right)\leq{\sf P}(\pi_{t+4}\cap\pi_{t+6}\neq\varnothing,\pi_{t+4}\cap\pi_{t+2}=\varnothing|\pi_t\cap\pi_{t+2}\neq\varnothing)\leq p_2.
$$
The same arguments work for $\tilde p_{4,2,6}$ and $\tilde p_{6,2,4}$. It means that $\tilde p_{2,4,6}$, $\tilde p_{4,2,6}$ and $\tilde p_{6,2,4}$ are asymptotically equal on $(G_1,u_1)$ and $(G_2,u_2)$. We may only hope that the values of $p_{2,4,6}$ are asymptotically different (and this is indeed the case). However, this is not sufficient since $p_{2,4,6}=O(n^{-3}+n^{-2}m^{-1}+n^{-1}m^{-2}+m^{-3})$ while $\tilde p_{2,4,6}=O(n^{-2}+n^{-1}m^{-1}+m^{-2})$, and the same holds for $\tilde p_{4,2,6}$ and $\tilde p_{6,2,4}$. So we should treat $\tilde p_{2,4,6}$, $\tilde p_{4,2,6}$ and $\tilde p_{6,2,4}$  more thoroughly.\\

By conditioning on the length (which is denoted by $s$ in the formula below) of the part of the path $\pi_t$ between its earliest intersection with $\pi_{t+2}$ and $(u,t)$ and applying the law of total probabilities, we get
\begin{align*}
 \tilde p_{2,4,6} \,\, 
 &= \,\,\sum_{s\text{ even}}\frac{(1-\varepsilon)^{2s+2}}{n}\left(\frac{(n-1)(m-1)}{nm}\right)^{s/2}\times \\
 &\quad\quad\times 
 \biggl(F(s)+(1+\varepsilon)^{2s+10}F_1^{\text{even}}(s)+
  \sum_{\ell=1}^{\infty}(1+\varepsilon)^{2s+10+2\ell}F_2^{\text{even}}(s,\ell)\biggr)\\
& + \,\, \sum_{s\text{ odd}}\frac{(1-\varepsilon)^{2s+2}}{m}\left(\frac{(n-1)(m-1)}{nm}\right)^{s/2}\times \\
 &\quad\quad\times 
\biggl(F(s)+(1+\varepsilon)^{2s+10} F_1^{\text{odd}}(s)+
 \sum_{\ell=1}^{\infty}(1+\varepsilon)^{2s+10+2\ell}F_2^{\text{odd}}(s,\ell) \biggr),
\end{align*}
where 
\begin{align*}
F(s) \,\,
& = \,\, \frac{(1-\varepsilon)^2}{n}+\frac{(1-\varepsilon)^4}{m}\frac{n-1}{n}+
 \frac{n-1}{n}\frac{(1-\varepsilon)^6}{n}\frac{(n-1)(m-1)}{nm}+\\
&\quad\,\,\, \frac{(n-1)(m-1)}{nm}\frac{(1+\varepsilon)^8}{m}\frac{(n-1)(m-1)(n-2)}{n^2m}+I(s\geq 1)\sum_{\ell=0}^{s-1}(1-\varepsilon)^{10+2\ell}\tilde F(s,\ell),
\end{align*}
$$
\tilde F(s,\ell)= \frac{(m-1)^2(n-1)^2(n-2)^{2+\lfloor\ell/2\rfloor}(m-2)^{1+\lceil\ell/2\rceil}(n-3)^{\lceil\ell/2\rceil}(m-3)^{\lfloor\ell/2\rfloor}}{m^{3+2\lfloor(\ell+1)/2\rfloor}n^{5+2\lfloor\ell/2\rfloor}};
$$
\begin{align*}
 F_1^{\text{even}}(s) \,\,
 &=\,\,\frac{(n-1)^3(m-1)^2(n-2)^{1+s/2}(m-2)^{\frac{s+2}{2}}(n-3)^{s/2}(m-3)^{s/2}}{n^{5+s}m^{3+s}},\\
 F_1^{\text{odd}}(s) \,\,
 &=\,\, \frac{(n-1)^2(m-1)^3(n-2)^{\frac{s+3}{2}}(m-2)^{\frac{s+1}{2}}(n-3)^{\frac{s+1}{2}}(m-3)^{\frac{s-1}{2}}}{n^{4+s}m^{4+s}};
\end{align*}
\begin{align*}
F_2^{\text{even}}(s,\ell) \,\,
&=\,\,\frac{(n-1)^{3+\lfloor\frac{\ell-1}{2}\rfloor}(m-1)^{2+\lceil\frac{\ell-1}{2}\rceil}(n-2)^{2+\frac{s}{2}}(m-2)^{1+\frac{s}{2}}(n-3)^{\frac{s}{2}}(m-3)^{\frac{s}{2}}}{n^{4+s+\lceil\frac{\ell+1}{2}\rceil}m^{3+s+\lfloor\frac{\ell+1}{2}\rfloor}},\\
F_2^{\text{odd}}(s,\ell) \,\,
&=\,\,\frac{(n-1)^{2+\lceil\frac{\ell-1}{2}\rceil}(m-1)^{2+\lfloor\frac{\ell-1}{2}\rfloor}(n-2)^{\frac{s+3}{2}}(m-2)^{\frac{s+3}{2}}(n-3)^{\frac{s+1}{2}}(m-3)^{\frac{s-1}{2}}}{n^{4+s+\lceil\frac{\ell}{2}\rceil}m^{4+s+\lfloor\frac{\ell}{2}\rfloor}}.
\end{align*}

Omitting tedious computations, we arrive at the asymptotics of $\tilde p_{2,4,6}-p_2^2:$
$$
p_2^2-\tilde p_{2,4,6}=\Pi(n,m,\varepsilon)(1+o(1))+Q(n,m,\varepsilon),
$$
where
\begin{align*}
\Pi(n,m,\varepsilon) \,\,
&= \,\,
\frac{4m(1-\varepsilon)^{14}}{m^3n^3(1-(1-\varepsilon)^4)(1-(1-\varepsilon)^8)^2}\biggl(2(m+n)(m^2(1-\varepsilon)^2+n^2(1-\varepsilon)^6)+\\
&\quad\quad\quad\quad\quad\quad\quad\quad\quad\quad
+(1+(1-\varepsilon)^{4})(m^2n+2n^2m)+nm^2\biggr)\\
&+ \,\, \frac{(1-\varepsilon)^{14}}{m^3n^3(1-(1-\varepsilon)^4)(1-(1-\varepsilon)^8)}\biggl((1-\varepsilon)^{2}(4m^2n+8m^3)+\\
 &\quad\quad
 +(1-\varepsilon)^{4}(16m^2n+11n^2m)
+(1-\varepsilon)^6(13mn^2+8n^3)+12m^2n+5n^2m\biggr)\\
 & + \,\, \frac{3(1-\varepsilon)^{12}(nm^2+m^3+mn^2+n^3)}{m^3n^3(1-(1-\varepsilon)^8)^2}
 \end{align*}
and $Q$ is a rational function of $n,m,\varepsilon$ with a polynomial $q(n,m,\varepsilon)=q_1(n,m)q_2(\varepsilon)$ in the denominator such that the smallest non-trivial power in $q_2$ is 1.

In a similar way (we omit the routine and direct computations of the exact values of $\tilde p_{4,2,6}$ and $\tilde p_{6,2,4}$ --- they can be found in Appendix in~\cite{Appendix}), we get
$$
(p_4^2-\tilde p_{4,2,6})+(p_4p_6-\tilde p_{6,2,4})=\tilde\Pi(n,m,\varepsilon)(1+o(1))+\tilde Q(n,m,\varepsilon),
$$
where
\begin{align*}
\tilde\Pi(n,m,\varepsilon) \,\,
& = \,\,\frac{16(1-\varepsilon)^{14}(m+n)}{m^3n^3(1-(1-\varepsilon)^4)(1-(1-\varepsilon)^8)^2}\biggl(m^2(1-\varepsilon)^2+\\
&\quad\quad\quad\quad\quad\quad\quad\quad\quad\quad\quad\quad\quad
+nm(1+(1-\varepsilon)^4)+n^2(1-\varepsilon)^6\biggr)\\ 
& + \,\,\frac{4(1-\varepsilon)^{14}}{m^3n^3(1-(1-\varepsilon)^4)(1-(1-\varepsilon)^8)}\biggl((7m+5n)(nm(1-\varepsilon)^4+n^2(1-\varepsilon)^6)+\\
&\quad\quad\quad\quad\quad\quad\quad\quad\quad\quad\quad\quad\quad
+(5m+3n)(m^2(1-\varepsilon)^2+nm)\biggr)\\
& + \,\, \frac{4(1-\varepsilon)^{12}(n+m)(nm+2n^2(1-\varepsilon)^4+m^2) } {m^3n^3(1-(1-\varepsilon)^8)^2}
 \end{align*}
and $\tilde Q$ is a rational function of $n,m,\varepsilon$ with a polynomial $\tilde q(n,m,\varepsilon)=\tilde q_1(n,m)\tilde q_2(\varepsilon)$ in the denominator such that the smallest non-trivial power in $q_2$ is 1.\\

Asymptotics of $p_{2,4,6}$ (as $m\to\infty$ and $n\to\infty$) is computed in a direct routine way (see the computations in~\cite[Appendix]{Appendix}). We get that $p_{2,4,6}=[\Sigma_0+\Sigma_1+\Sigma^e_0+\Sigma^e_1+\Sigma^e_2+\Sigma^o_0+\Sigma^o_1+\Sigma^o_2](1+o(1))$, where the values of all $\Sigma$'s are given in~\cite[Appendix]{Appendix}. Note that, for $n=\Theta(m)$, we have $n^3[\Sigma_0+\Sigma_1+\Sigma^e_0+\Sigma^e_1+\Sigma^e_2+\Sigma^o_0+\Sigma^o_1+\Sigma^o_2]=\Theta(1)$.

Note that an $\varepsilon$ such that $S^{(2)}$ does not distinguish between $(G_1,u_1)$ and $(G_2,u_2)$ can be chosen arbitrarily close to $\alpha$. More formally, for every $\delta>0$, there exists $N$ such that, for all valid $m_1>N$ (i.e. $m_1/(1-\alpha)^3$ should be an integer), there exists $\varepsilon(m_1)\in(\alpha-\delta,\alpha+\delta)$ such that $S^{(2)}$ does not distinguish between $(G_1,u_1)$ and $(G_2,u_2)$. Assume that, for all such (or, equivalently, for all but finitely many) $m_1$ and $\varepsilon(m_1)$,  $S^{(2,4,6)}$ does not distinguish between these graphs as well. From the above, it implies that 
\begin{align}
 \sum_{j=0,1}\Sigma_j(n_1,(1-\alpha)^3n_1,\alpha) 
 &+\sum_{j=0,1,2}\sum_{*=o,e}\Sigma^*_j(n_1,(1-\alpha)^3n_1,\alpha)-\notag\\
 &-(\Pi+\tilde\Pi+Q+\tilde Q)(n_1,(1-\alpha)^3n_1,\alpha)\sim\notag\\
 \sum_{j=0,1}\Sigma_j((1-\alpha)n_1,(1-\alpha)^2n_1,\alpha)
 &+
  \sum_{j=0,1,2}\sum_{*=o,e}\Sigma^*_j((1-\alpha)n_1,(1-\alpha)^2n_1,\alpha)-
  \label{eq:sigmas_asympt_equality}\\
   &-(\Pi+\tilde\Pi+Q+\tilde Q)((1-\alpha)n_1,(1-\alpha)^2n_1,\alpha),\notag
  \label{eq:sigmas_asympt_equality}
\end{align}
where all $\Sigma$'s are considered as functions of $n,m$ and $\varepsilon$. After multiplying both parts of (\ref{eq:sigmas_asympt_equality}) by $n_1^3(1-(1-\alpha)^4)(1-(1-\alpha)^6)(1-(1-\alpha)^8)^2$, they become polynomials of $\alpha$ 
that do not depend on $n_1$. Therefore  (\ref{eq:sigmas_asympt_equality}) is only possible when these polynomials are equal. But this is not the case since the coefficients in front of $\alpha^2$ of the left and right polynomials are equal 2384 and -2904 respectively (we do not need exact values of $Q$ and $\tilde Q$ since after the mentioned multiplication they become polynomials of $\alpha$ with the smallest non-trivial degree at least 3). $\Box$

\section{Remarks and further questions}
\label{sc:last}

In this paper we address the following question: given two non-isomorphic rooted graphs $(G_1,u_1)$ and $(G_2,u_2)$, is it possible to distinguish them by sequences of observations of the discrete time noisy voter model at vertices $u_1$ and $u_2$? We show that stars rooted at centers are not distinguishable, and this is the only example of non-distinguishable graphs that we know. On the other hand, the observations of the NVM determine the size of a complete graph as well as the size of a cycle; they also distinguish between non-isomorphic complete bipartite graphs (for almost all values of the noise), perfect trees, infinite graphs $\mathbb{Z}^d$. We conjecture that the stars are the only finite graphs that are not distinguishable. As an evidence we prove that almost all pairs of graphs of the same size are distinguishable for almost all $\varepsilon$. It is not hard to see that this result can be extended to pairs of random graphs of different sizes. 

\subsection{Weaker versions of the conjecture}

Some weaker versions of this conjecture can be also considered. In particular, one may try to verify it for trees, for complete multipartite graphs, for sparse binomial random graphs or for random regular graphs (both in sparse and dense regimes). On the other hand, assume that $G'$ is obtained from $G$ be deleting a single edge. It is not hard to see that $G$ and $G'$ are distinguishable for all values of $\varepsilon$. Is it true for all $\varepsilon\in(0,1)$? In Section~\ref{sc:bip}, we give infinitely many examples of graphs that can not be distinguished by statistics that we use everywhere in the paper (the number of repetitions $X_t=X_{t+d}$) for some specific $\varepsilon$. However, we show that this is not an issue of the model --- these graphs are actually distinguishable. A natural question to ask, is it true that distinguishability for some $\varepsilon$ implies distinguishability for all values of the noise? It is also interesting to prove or disprove the existence of a finite graph $G$ and an infinite graph $G'$ with bounded degrees that are indistinguishable. 

\subsection{Property testing}

Another possible application of the developed machinery is property testing~\cite{Testing}. In particular, it is not hard to see that, for infinite vertex transitive graphs, amenability (see, e.g.,~\cite{Networks}) is equivalent to the existence of a constant $c\in(0,1)$ and a constant $C>0$ such that, for all $d\in\mathbb{N}$, $0<\lim_{t\to\infty}\mathrm{cov}(X_t,X_{t+d})<C(c(1-\varepsilon))^d$. This follows directly from the Kesten's criterion~\cite[Theorem 6.7]{Networks} and may be considered as another criterion of amenability. A related problem for finite graphs is to test the expansion of the graph (see, e.g.,~\cite{Nachmias} for the recent progress in this problem). Can the local observations in NVM be used to distinguish between an $(n,d,\lambda)$-expander and a $d$-regular graph on $[n]$ which is not an $(n,d,\lambda)$-expander (or at least $\epsilon$-far from being an $(n,d,\lambda')$-expander)? It is easy to see that, for a vertex transitive graph on $n$ vertices, if the variation distance $\delta$-mixing time equals $d$, then $0<\lim_{t\to\infty}\mathrm{cov}(X_t,X_{t+d})\leq\frac{1}{1-(1-\varepsilon)^2}(1-\varepsilon)^d\left(\frac{1}{n}+\delta\right)$. It would be interesting to show an opposite implication. 

\subsection{Observing return times of a random walk}

The concept of reconstructing global properties of a graph $G$ from local observations of a random process on $G$ was introduced in~\cite{BL,BKL}. In~\cite{BL} the ``Noisy Circulator'' process on a graph $G$ embedded in an orientable surface was considered. It was shown that the genus of the surface can be determined by observing the process on the edges incident with some specific set $U\subseteq V(G)$. In the same paper, the problem of learning global graph properties from observing visits of a random walk at a single vertex was stated. This problem was further studied in~\cite{BKL}. It was shown that, by observing the return time sequence of a random walk at a connected graph $G$, it is possible to determine an eigenvalue $\lambda$ of $G$ under some additional restrictions on $G$. However, the multiplicity of $\lambda$ is not necessarily reconstructible. As an example, they considered two trees presented in Figure~\ref{fg:fg} and showed that the distribution of the return time to the root is the same in both trees, but the eigenvalues have different multiplicities. It is not hard to show that NVM (in particular, $S^{(2)}$) distinguishes between these 2 graphs. In particular, if $\varepsilon=0.01$, then for the tree on the left $p_2\geq 0.926$, while for the tree on the right $p_2\leq 0.917$. A natural question to ask, does NVM distinguish between any pair of non-isomorphic graphs that are not stars and have equal eigenvalues?

\begin{figure}
  \includegraphics[width=330.0pt,height=130.0pt]{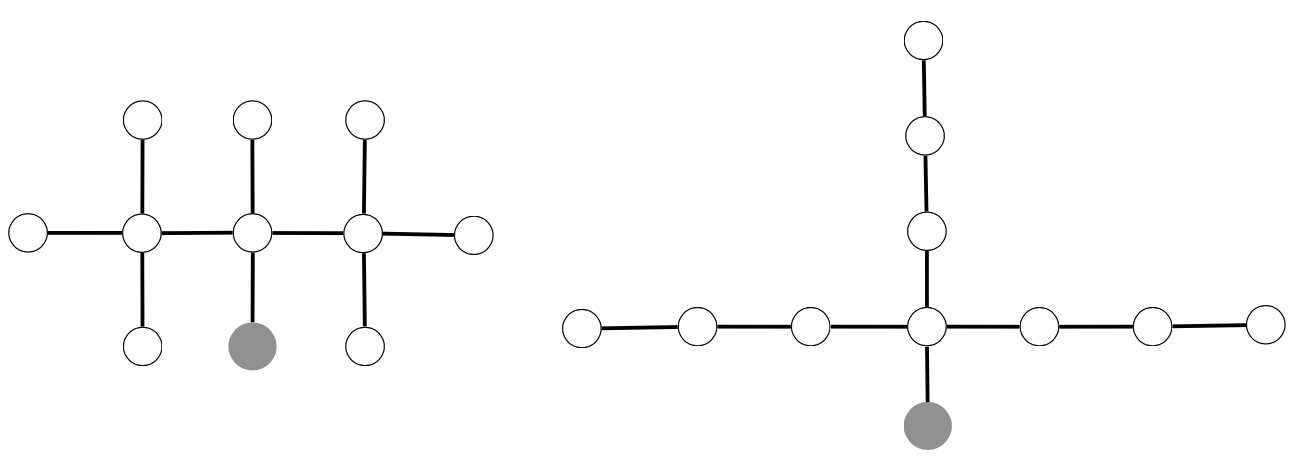}
  \caption{two trees with the same return times but different spectra}
  \label{fg:fg}
  \end{figure}

\subsection{Other models}

Although the voter model without noise ($\varepsilon=0$) reaches a consensus at some moment $t_0$, it may happen that some pairs of graphs can be distinguished (or some graph properties can be learned) with large enough probability in time less than $t_0$. Then, possibly, something can be learned even from local observations of the original voter model. Moreover, the reason of why stars are not distinguishable is that the decisions for choosing a new opinion are made by all vertices simultaneously which is not the case in the continuous time model. It would be interesting to know, are there two graphs that are not distinguishable by the continuous time noisy voter model at least for some $\varepsilon\in(0,1)$.

Let us conclude with asking the same question for other random processes on graphs modelling interactions between particles. What kind of global properties can be distilled from local observations of Glauber dynamics for Ising model (a standard model in statistical physics for the temporal evolution of spin systems) or, say, majority dynamics (which was of interest in biophysics~\cite{McCulloch} and psychology~\cite{Cartwright})?

\section*{Appendix}

Here we give the missing computations of $\tilde p_{4,2,6}$, $\tilde p_{6,2,4}$ and $p_{2,4,6}$ from the proof of the last part of Theorem~\ref{th:bip} in Section~\ref{sc:bip}.

  \subsection*{A non-intersecting pair of intersecting pairs of histories: $\tilde p_{4,2,6}$ and $\tilde p_{6,2,4}$}

By conditioning on the length (which is denoted by $s$ in the formula below) of the part of the path $\pi_t$ between its earliest intersection with $\pi_{t+4}$ and $(u,t)$ and applying the law of total probabilities, we get
\begin{multline*}
 \tilde p_{4,2,6}=\sum_{s\text{ even}}\frac{(1-\varepsilon)^{2s+4}}{n}\left(\frac{(n-1)(m-1)}{nm}\right)^{s/2}\left(1-\frac{1}{n}\right)\biggl(\tilde F_1(n,m,\varepsilon)+\\
 (1-\varepsilon)^{2s+8}\frac{(m-1)^2(n-1)^2(n-2)^{\frac{s}{2}+1}(m-2)^{\frac{s}{2}+1}(n-3)^{\frac{s}{2}}(m-3)^{\frac{s}{2}}}{n^{s+4}m^{s+3}}+
 \sum_{\ell=1}^{\infty}(1-\varepsilon)^{2s+2\ell+8}\times\\
 \frac{(m-1)^2(n-1)^2(n-2)^{\frac{s}{2}+2}(m-2)^{\frac{s}{2}+1}(n-3)^{\frac{s}{2}}(m-3)^{\frac{s}{2}}(m-1)^{\lceil\frac{\ell-1}{2}\rceil}(n-1)^{\lfloor\frac{\ell-1}{2}\rfloor}}{n^{s+\lfloor\frac{\ell}{2}\rfloor+4}m^{s+\lceil\frac{\ell}{2}\rceil+3}}
 \biggr)+\\
 \sum_{s\text{ odd}}\frac{(1-\varepsilon)^{2s+4}}{m}\left(\frac{n-1}{n}\right)^{\frac{s+1}{2}}\left(\frac{m-1)}{m}\right)^{\frac{s-1}{2}}\left(1-\frac{1}{n}\right)\biggl(\tilde F_1(n,m,\varepsilon)+\\
 (1-\varepsilon)^{2s+8}\frac{(m-1)^3(n-1)(n-2)^{\frac{s+3}{2}}(m-2)^{\frac{s+1}{2}}(n-3)^{\frac{s+1}{2}}(m-3)^{\frac{s-1}{2}}}{n^{s+3}m^{s+4}}+
 \sum_{\ell=1}^{\infty}(1-\varepsilon)^{2s+2\ell+8}\times\\
 \frac{(m-1)^3(n-1)(n-2)^{\frac{s+3}{2}}(m-2)^{\frac{s+3}{2}}(n-3)^{\frac{s+1}{2}}(m-3)^{\frac{s-1}{2}}(n-1)^{\lceil\frac{\ell-1}{2}\rceil}(m-1)^{\lfloor\frac{\ell-1}{2}\rfloor}}{n^{s+\lceil\frac{\ell}{2}\rceil+3}m^{s+\lfloor\frac{\ell}{2}\rfloor+4}}
 \biggr)
\end{multline*}
where
\begin{align*}
 \tilde F_1(n,m,\varepsilon)=
 &\Upsilon_1(n,m,\varepsilon)+\\
 &\sum_{\ell=0}^{s-1}(1-\varepsilon)^{2\ell+8}\frac{(n-1)(m-1)^2(n-2)^{\lceil\frac{\ell+3}{2}\rceil}(m-2)^{\lfloor\frac{\ell+3}{2}\rfloor}(n-3)^{\lceil\frac{\ell}{2}\rceil}(m-3)^{\lfloor\frac{\ell}{2}\rfloor}}{n^{2\lceil\frac{\ell+3}{2}\rceil}m^{2\lfloor\frac{\ell+3}{2}\rfloor}}
\end{align*}
and $\Upsilon_1(n,m,\varepsilon)$ is a rational function that does not depend on $s$ with a polynomial in the denominator that does not depend on $\varepsilon$.

In the same way,
\begin{multline*}
\tilde p_{6,2,4}=\sum_{s\text{ even}}\frac{(1-\varepsilon)^{2s+6}}{n}
 \left(\frac{(n-1)(m-1)}{nm}\right)^{s/2}
 \left(1-\frac{1}{n}\right)^2
 \biggl(\tilde F_2(n,m,\varepsilon)+\\
 (1-\varepsilon)^{2s+6}\frac{(m-1)^2(n-1)(n-2)^{\frac{s}{2}+1}(m-2)^{\frac{s}{2}+1}(n-3)^{\frac{s}{2}}(m-3)^{\frac{s}{2}}}{n^{s+3}m^{s+3}}+
 \sum_{\ell=1}^{\infty}(1-\varepsilon)^{2s+2\ell+6}\times\\
 \frac{(m-1)^2(n-1)(n-2)^{\frac{s}{2}+2}(m-2)^{\frac{s}{2}+1}(n-3)^{\frac{s}{2}}(m-3)^{\frac{s}{2}}(m-1)^{\lceil\frac{\ell-1}{2}\rceil}(n-1)^{\lfloor\frac{\ell-1}{2}\rfloor}}{n^{s+\lfloor\frac{\ell}{2}\rfloor+3}m^{s+\lceil\frac{\ell}{2}\rceil+3}}
 \biggr)+\\
 \sum_{s\text{ odd}}\frac{(1-\varepsilon)^{2s+6}}{m}
 \left(\frac{n-1}{n}\right)^{\frac{s+1}{2}}\left(\frac{m-1)}{m}\right)^{\frac{s-1}{2}}
 \left(1-\frac{1}{n}\right)^2
 \biggl(\tilde F_2(n,m,\varepsilon)+\\
 (1-\varepsilon)^{2s+6}\frac{(m-1)^3(n-2)^{\frac{s+3}{2}}(m-2)^{\frac{s+1}{2}}(n-3)^{\frac{s+1}{2}}(m-3)^{\frac{s-1}{2}}}{n^{s+2}m^{s+4}}+\\
 \sum_{\ell=1}^{\infty}(1-\varepsilon)^{2s+2\ell+6}\frac{(m-1)^3(n-2)^{\frac{s+3}{2}}(m-2)^{\frac{s+3}{2}}(n-3)^{\frac{s+1}{2}}(m-3)^{\frac{s-1}{2}}(n-1)^{\lceil\frac{\ell-1}{2}\rceil}(m-1)^{\lfloor\frac{\ell-1}{2}\rfloor}}{n^{s+\lceil\frac{\ell}{2}\rceil+2}m^{s+\lfloor\frac{\ell}{2}\rfloor+4}}
 \biggr)
\end{multline*}
where
$$
 \tilde F_2(n,m,\varepsilon)=
 \Upsilon_2(n,m,\varepsilon)+\sum_{\ell=0}^{s-1}(1-\varepsilon)^{2\ell+6}\frac{(m-1)^2(n-2)^{\lceil\frac{\ell+3}{2}\rceil}(m-2)^{\lfloor\frac{\ell+3}{2}\rfloor}(n-3)^{\lceil\frac{\ell}{2}\rceil}(m-3)^{\lfloor\frac{\ell}{2}\rfloor}}{n^{2\lfloor\frac{\ell}{2}\rfloor+3}m^{2\lceil\frac{\ell}{2}\rceil+3}}
$$
and $\Upsilon_2(n,m,\varepsilon)$ is a rational function that does not depend on $s$ with a polynomial in the denominator that does not depend on $\varepsilon$.

 \subsection*{Four intersecting histories}
 
Let us find an asymptotics of $p_{2,2,2}$. It consists of the following parts.\\

1) Here we find an asymptotics of the probability that $\pi_t$ meets $\pi_{t+2},\pi_{t+4},\pi_{t+6}$, and $\pi_{t+2}$ has length $2$ (clearly, it starts and finishes at $u$):
\begin{multline*} 
\frac{(1-\varepsilon)^2}{n}\biggl(\frac{(1-\varepsilon)^2}{n}\left[\sum_{j=0,2,4}\frac{(1-\varepsilon)^{2+j}}{n}+\sum_{j=0,2}\frac{(1-\varepsilon)^{3+j}}{m}\right]+\\
 \frac{(1-\varepsilon)^3}{m}\left[\sum_{j=0,4}\frac{(1-\varepsilon)^{2+j}}{n}
+\sum_{j=0,2}\frac{(1-\varepsilon)^{3+j}}{m}+  \frac{2(1-\varepsilon)^4}{n}\right]+\\
 \frac{(1-\varepsilon)^4}{n}\left[\sum_{j=0,4}\frac{(1-\varepsilon)^{2+j}}{n}
+\frac{(1-\varepsilon)^3}{m}+  \frac{2(1-\varepsilon)^4}{n}+\frac{2(1-\varepsilon)^5}{m}\right]\biggr)=
\end{multline*}
\begin{multline*}
\frac{(1-\varepsilon)^6}{n^3m^2}\biggl[m^2(1+2(1-\varepsilon)^2+3(1-\varepsilon)^4+(1-\varepsilon)^6)+\\
nm(2(1-\varepsilon)+4(1-\varepsilon)^3+3(1-\varepsilon)^5)+n^2((1-\varepsilon)^2+(1-\varepsilon)^4)\biggr]=:\Sigma_0.
\end{multline*}


2) Second, we find an asymptotics of the probability that $\pi_t$ meets $\pi_{t+2},\pi_{t+4},\pi_{t+6}$, and $\pi_{t+2}$ has length $3$:
\begin{multline*}
 \frac{(1-\varepsilon)^4}{m} \biggl(\frac{(1-\varepsilon)^2}{n}
 \left[\sum_{j=0,2}\frac{(1-\varepsilon)^{2+j}}{n}+\sum_{j=0,2,4}\frac{(1-\varepsilon)^{3+j}}{m}+\frac{2(1-\varepsilon)^6}{n}\right]+\\
 \frac{(1-\varepsilon)^3}{m} \left[\frac{(1-\varepsilon)^2}{n}
+\sum_{j=0,2,4}\frac{(1-\varepsilon)^{3+j}}{m}+  \sum_{j=2,4}\frac{2(1-\varepsilon)^{2+j}}{n}\right]+\\
 \frac{2(1-\varepsilon)^4}{n} \left[\frac{(1-\varepsilon)^2}{n}+\sum_{j=0,4}\frac{(1-\varepsilon)^{3+j}}{m}+ 
 \sum_{j=2,4}\frac{2(1-\varepsilon)^{2+j}}{n}+\frac{2(1-\varepsilon)^5}{m}\right]+\\
 \frac{(1-\varepsilon)^5}{m} \left[\frac{(1-\varepsilon)^2}{n}+\sum_{j=0,4}\frac{(1-\varepsilon)^{3+j}}{m}+ 
 \frac{2(1-\varepsilon)^{4}}{n}+\frac{2(1-\varepsilon)^5}{m}+\frac{3(1-\varepsilon)^{6}}{n}\right] \biggr)=\\
 \frac{(1-\varepsilon)^8}{m^3n^2}\biggl[m^2(1+3(1-\varepsilon)^2+6(1-\varepsilon)^4+4(1-\varepsilon)^6)+nm(2(1-\varepsilon)+6(1-\varepsilon)^3+9(1-\varepsilon)^5+5(1-\varepsilon)^7)+\\
 n^2((1-\varepsilon)^2+2(1-\varepsilon)^4+3(1-\varepsilon)^6+(1-\varepsilon)^8)\biggr]=:\Sigma_1.
\end{multline*}

3) Next, we find an asymptotics of the probability that $\pi_t$ meets $\pi_{t+2},\pi_{t+4},\pi_{t+6}$, and $\pi_t$ has an even length $s+2\geq 2$:
\begin{multline*}  
\sum_{s\text{ even}} \frac{(1-\varepsilon)^{2s+2}}{n}
  \biggl(\frac{(1-\varepsilon)^2}{n}\biggl[(1+(1-\varepsilon)^2)\left(\frac{(1-\varepsilon)^2}{n}+\frac{(1-\varepsilon)^3}{m}\right)+\\
  \sum_{r=0}^{s-2}\frac{2(1-\varepsilon)^{6+r}}{n}+\sum_{r=1}^{s-1}\frac{2(1-\varepsilon)^{6+r}}{m}+
  \frac{(1-\varepsilon)^{6+s}}{n}\biggr]+
\end{multline*}
\begin{multline*}  
  \frac{(1-\varepsilon)^3}{m}\biggl[\frac{(1-\varepsilon)^2}{n}+\frac{(1-\varepsilon)^3}{m}+\frac{2(1-\varepsilon)^4}{n}+\frac{(1-\varepsilon)^5}{m}+\\
  \sum_{r=0}^{s-2}\frac{2(1-\varepsilon)^{6+r}}{n}+\sum_{r=1}^{s-1}\frac{2(1-\varepsilon)^{6+r}}{m}+
  \frac{(1-\varepsilon)^{6+s}}{n}\biggr]+
\end{multline*}
\begin{multline*}  
  \sum_{\ell=0,2,\ldots,s-2}\frac{2(1-\varepsilon)^{4+\ell}}{n}\biggl[(1+2(1-\varepsilon)^2)\left(\frac{(1-\varepsilon)^2}{n}+\frac{(1-\varepsilon)^3}{m}\right)+I(\ell\geq 2)\sum_{r=0}^{\ell-2}\frac{3(1-\varepsilon)^{6+r}}{n}+\\
  I(\ell\geq 2)\sum_{r=1}^{\ell-1}\frac{3(1-\varepsilon)^{6+r}}{m}+
  \sum_{r=\ell}^{s-2}\frac{2(1-\varepsilon)^{6+r}}{n}+\sum_{r=\ell+1}^{s-1}\frac{2(1-\varepsilon)^{6+r}}{m}+\frac{(1-\varepsilon)^{6+s}}{n}\biggr]+
\end{multline*}
\begin{multline*}  
  \sum_{\ell=1,3,\ldots,s-1}\frac{2(1-\varepsilon)^{4+\ell}}{m}\biggl[(1+2(1-\varepsilon)^2)\left(\frac{(1-\varepsilon)^2}{n}+\frac{(1-\varepsilon)^3}{m}\right)+\sum_{r=0}^{\ell-1}\frac{3(1-\varepsilon)^{6+r}}{n}+\\
  I(\ell\geq 3)\sum_{r=1}^{\ell-2}\frac{3(1-\varepsilon)^{6+r}}{m}+
  I(\ell\leq s-3)\sum_{r=\ell+1}^{s-2}\frac{2(1-\varepsilon)^{6+r}}{n}+\sum_{r=\ell}^{s-1}\frac{2(1-\varepsilon)^{6+r}}{m}+\frac{(1-\varepsilon)^{6+s}}{n}\biggr]+
\end{multline*}
\begin{multline*}  
  \frac{(1-\varepsilon)^{4+s}}{n}\biggl[(1+2(1-\varepsilon)^2)\left(\frac{(1-\varepsilon)^2}{n}+\frac{(1-\varepsilon)^3}{m}\right)+\\
  \sum_{r=0}^{s-2}\frac{3(1-\varepsilon)^{6+r}}{n}+\sum_{r=1}^{s-1}\frac{3(1-\varepsilon)^{6+r}}{m}+
  \frac{(1-\varepsilon)^{6+s}}{n}\biggr]\biggr).
\end{multline*}

4) Finally, we find an asymptotics of the probability that $\pi_t$ meets $\pi_{t+2},\pi_{t+4},\pi_{t+6}$, and $\pi_t$ has an odd length $s+2\geq 3$:
\begin{multline*}
  \sum_{s\text{ odd}} \frac{(1-\varepsilon)^{2s+2}}{m}  \biggl(\frac{(1-\varepsilon)^2}{n}\biggl[(1+(1-\varepsilon)^2)\left(\frac{(1-\varepsilon)^2}{n}+\frac{(1-\varepsilon)^3}{m}\right)+\\
  \sum_{r=0}^{s-1}\frac{2(1-\varepsilon)^{6+r}}{n}+\sum_{r=1}^{s-2}\frac{2(1-\varepsilon)^{6+r}}{m}+
  \frac{(1-\varepsilon)^{6+s}}{m}\biggr]+
\end{multline*}
\begin{multline*}
  \frac{(1-\varepsilon)^3}{m}\biggl[\frac{(1-\varepsilon)^2}{n}+\frac{(1-\varepsilon)^3}{m}+\frac{2(1-\varepsilon)^4}{n}+\frac{(1-\varepsilon)^5}{m}+\\
  \sum_{r=0}^{s-1}\frac{2(1-\varepsilon)^{6+r}}{n}+\sum_{r=1}^{s-2}\frac{2(1-\varepsilon)^{6+r}}{m}+
  \frac{(1-\varepsilon)^{6+s}}{m}\biggr]+
\end{multline*}
\begin{multline*}
  \sum_{\ell=0,2,\ldots,s-1}\frac{2(1-\varepsilon)^{4+\ell}}{n}\biggl[(1+2(1-\varepsilon)^2)\left(\frac{(1-\varepsilon)^2}{n}+\frac{(1-\varepsilon)^3}{m}\right)+I(\ell\geq 2)\sum_{r=0}^{\ell-2}\frac{3(1-\varepsilon)^{6+r}}{n}+\\
  I(\ell\geq 2)\sum_{r=1}^{\ell-1}\frac{3(1-\varepsilon)^{6+r}}{m}+
  \sum_{r=\ell}^{s-1}\frac{2(1-\varepsilon)^{6+r}}{n}+I(\ell\leq s-3)\sum_{r=\ell+1}^{s-2}\frac{2(1-\varepsilon)^{6+r}}{m}+\frac{(1-\varepsilon)^{6+s}}{m}\biggr]+
\end{multline*}
\begin{multline*} 
 \sum_{\ell=1,3,\ldots,s-2}\frac{2(1-\varepsilon)^{4+\ell}}{m}\biggl[(1+2(1-\varepsilon)^2)\left(\frac{(1-\varepsilon)^2}{n}+\frac{(1-\varepsilon)^3}{m}\right)+\sum_{r=0}^{\ell-1}\frac{3(1-\varepsilon)^{6+r}}{n}+\\
  I(\ell\geq 3)\sum_{r=1}^{\ell-2}\frac{3(1-\varepsilon)^{6+r}}{m}+\sum_{r=\ell+1}^{s-1}\frac{2(1-\varepsilon)^{6+r}}{n}+\sum_{r=\ell}^{s-2}\frac{2(1-\varepsilon)^{6+r}}{m}+\frac{(1-\varepsilon)^{6+s}}{m}\biggr]+
\end{multline*} 
\begin{multline*}
  \frac{(1-\varepsilon)^{4+s}}{m}\biggl[(1+2(1-\varepsilon)^2)\left(\frac{(1-\varepsilon)^2}{n}+\frac{(1-\varepsilon)^3}{m}\right)+\\  
  \sum_{r=0}^{s-1}\frac{3(1-\varepsilon)^{6+r}}{n}+\sum_{r=1}^{s-2}\frac{3(1-\varepsilon)^{6+r}}{m}+
  \frac{(1-\varepsilon)^{6+s}}{m}\biggr]\biggr)
\end{multline*}

Now, let us sum up series in the third summand of the limit probability (related to an even length at least 2). It comprises the following three summands:\\

\begin{itemize}
\item no nested summations:
\begin{multline*}
  \frac{(1-\varepsilon)^{6}}{n(1-(1-\varepsilon)^4)}\times\\
  \biggl(\frac{m^2(1+(1-\varepsilon)^2)+(1-\varepsilon)(2+2(1-\varepsilon)+(1-\varepsilon)^2)mn+(1-\varepsilon)^2(1+(1-\varepsilon)^2)n^2)}{m^2n^2}\biggr)+\\
 \frac{(1-\varepsilon)^{8}(m(1+3(1-\varepsilon)^2)+n((1-\varepsilon)+3(1-\varepsilon)^3))}{n^3m(1-(1-\varepsilon)^6)}+\frac{(1-\varepsilon)^{12}}{n^3(1-(1-\varepsilon)^8)}=:\Sigma^e_0;
\end{multline*}

\item a single pair of nested summations:
\begin{multline*}
  \sum_{s\text{ even}} \frac{(1-\varepsilon)^{2s+2}}{n}\times
  \biggl(\frac{(1-\varepsilon)^2}{n}\left[\sum_{r=0}^{s-2}\frac{2(1-\varepsilon)^{6+r}}{n}+\sum_{r=1}^{s-1}\frac{2(1-\varepsilon)^{6+r}}{m}\right]+\\
  \frac{(1-\varepsilon)^3}{m}\left[\sum_{r=0}^{s-2}\frac{2(1-\varepsilon)^{6+r}}{n}+\sum_{r=1}^{s-1}\frac{2(1-\varepsilon)^{6+r}}{m}\right]+\\
  \sum_{\ell=0,2,\ldots,s-2}\frac{2(1-\varepsilon)^{4+\ell}}{n}\biggl[(1+2(1-\varepsilon)^2)\left(\frac{(1-\varepsilon)^2}{n}+\frac{(1-\varepsilon)^3}{m}\right)+\frac{(1-\varepsilon)^{6+s}}{n}\biggr]+\\
  \sum_{\ell=1,3,\ldots,s-1}\frac{2(1-\varepsilon)^{4+\ell}}{m}\biggl[(1+2(1-\varepsilon)^2)\left(\frac{(1-\varepsilon)^2}{n}+\frac{(1-\varepsilon)^3}{m}\right)+\frac{(1-\varepsilon)^{6+s}}{n}\biggr]+\\
  \frac{(1-\varepsilon)^{4+s}}{n}\left[\sum_{r=0}^{s-2}\frac{3(1-\varepsilon)^{6+r}}{n}+\sum_{r=1}^{s-1}\frac{3(1-\varepsilon)^{6+r}}{m}\right]\biggr)=
  \end{multline*}
\begin{multline*}
 \sum_{r\text{ even}}\sum_{s=r+2}^{\infty}\biggl[\frac{(1-\varepsilon)^{2s+8}}{n}\biggl(\frac{6(1-\varepsilon)^{2+r}+2(1-\varepsilon)^r}{n^2}+\frac{6(1-\varepsilon)^{3+r}+2(1-\varepsilon)^{1+r}}{mn}\biggr)+\\
 \frac{5(1-\varepsilon)^{12+r+3s}}{n^3}\biggr]+
\end{multline*}
\begin{multline*} 
 \sum_{r\text{ odd}}\sum_{s=r+1}^{\infty}\biggl[\frac{(1-\varepsilon)^{2s+8}}{n}\biggl(\frac{6(1-\varepsilon)^{2+r}+2(1-\varepsilon)^r}{nm}+\frac{6(1-\varepsilon)^{3+r}+2(1-\varepsilon)^{1+r}}{m^2}\biggr)+\\
 \frac{5(1-\varepsilon)^{12+r+3s}}{n^2m}\biggr]=
\end{multline*}
\begin{multline*}
 \sum_{r\text{ even}}\left[\frac{(1-\varepsilon)^{3r+12}(m(6(1-\varepsilon)^2+2)+n(6(1-\varepsilon)^3+2(1-\varepsilon)))}{mn^3(1-(1-\varepsilon)^4)}+\frac{5(1-\varepsilon)^{18+4r}}{n^3(1-(1-\varepsilon)^6)}\right]+\\
 \sum_{r\text{ odd}}\left[\frac{(1-\varepsilon)^{3r+10}(m(6(1-\varepsilon)^2+2)+n(6(1-\varepsilon)^3+2(1-\varepsilon)))}{m^2n^2(1-(1-\varepsilon)^4)}+\frac{5(1-\varepsilon)^{15+4r}}{n^2m(1-(1-\varepsilon)^6)}\right]=
\end{multline*}
\begin{multline*}
 \frac{(1-\varepsilon)^{12}(m(6(1-\varepsilon)^2+2)+n(6(1-\varepsilon)^3+2(1-\varepsilon)))}{mn^3(1-(1-\varepsilon)^4)(1-(1-\varepsilon)^6)}+\frac{5(1-\varepsilon)^{18}}{n^3(1-(1-\varepsilon)^6)(1-(1-\varepsilon)^8)}+\\
 \frac{(1-\varepsilon)^{13}(m(6(1-\varepsilon)^2+2)+n(6(1-\varepsilon)^3+2(1-\varepsilon)))}{m^2n^2(1-(1-\varepsilon)^4)(1-(1-\varepsilon)^6)}+\frac{5(1-\varepsilon)^{19}}{n^2m(1-(1-\varepsilon)^6)(1-(1-\varepsilon)^8)}=:\Sigma^e_1.
\end{multline*}

\item a 3-tuple of nested summations:
$$
 \sum_{\ell\text{ even}}\sum_{r=\ell}^{\infty}\sum_{s=r+2}^{\infty}\frac{4(1-\varepsilon)^{2s+\ell+r+12}}{n^3}+
 \sum_{\ell\text{ even}}\sum_{r=\ell+1}^{\infty}\sum_{s=r+1}^{\infty}\frac{4(1-\varepsilon)^{2s+\ell+r+12}}{n^2m}+
$$
$$
 \sum_{r\text{ even}}\sum_{\ell=r+2}^{\infty}\sum_{s=\ell+2}^{\infty}\frac{6(1-\varepsilon)^{2s+\ell+r+12}}{n^3}+
 \sum_{r\text{ odd}}\sum_{\ell=r+1}^{\infty}\sum_{s=\ell+2}^{\infty}\frac{6(1-\varepsilon)^{2s+\ell+r+12}}{n^2m}+
$$
$$
 \sum_{r\text{ even}}\sum_{\ell=r+1}^{\infty}\sum_{s=\ell+1}^{\infty}\frac{6(1-\varepsilon)^{2s+\ell+r+12}}{mn^2}+
 \sum_{r\text{ odd}}\sum_{\ell=r+2}^{\infty}\sum_{s=\ell+1}^{\infty}\frac{6(1-\varepsilon)^{2s+\ell+r+12}}{m^2n}+
$$
$$
 \sum_{\ell\text{ odd}} \sum_{r=\ell+1}^{\infty}\sum_{s=r+2}^{\infty}\frac{4(1-\varepsilon)^{2s+\ell+r+12}}{mn^2}+
 \sum_{\ell\text{ odd}}\sum_{r=\ell}^{\infty}\sum_{s=r+1}^{\infty}\frac{4(1-\varepsilon)^{2s+\ell+r+12}}{m^2n}=
$$
\begin{multline*}
 \frac{(4(1-\varepsilon)^{16}+6(1-\varepsilon)^{22})(m+n(1-\varepsilon))}{n^3m(1-(1-\varepsilon)^4)(1-(1-\varepsilon)^6)(1-(1-\varepsilon)^8)}+\\
 \frac{(1-\varepsilon)^{17}(n(4(1-\varepsilon)+6(1-\varepsilon)^3)+m(6+4(1-\varepsilon)^6))}{m^2n^2(1-(1-\varepsilon)^4)(1-(1-\varepsilon)^6)(1-(1-\varepsilon)^8)}=:\Sigma^e_2.
\end{multline*}
\end{itemize}

In the same way, the fourth summand of the limit probability (related to an odd length at least 2) consists of the following summands:\\

\begin{itemize}

\item no nested summations:

\begin{multline*}
  \frac{(1-\varepsilon)^{8}}{m(1-(1-\varepsilon)^4)}\times\\
  \biggl(\frac{m^2(1+(1-\varepsilon)^2)+(1-\varepsilon)(2+2(1-\varepsilon)+(1-\varepsilon)^2)mn+(1-\varepsilon)^2(1+(1-\varepsilon)^2)n^2)}{m^2n^2}\biggr)+\\
 \frac{(1-\varepsilon)^{11}(m(1+3(1-\varepsilon)^2)+n((1-\varepsilon)+3(1-\varepsilon)^3))}{nm^3(1-(1-\varepsilon)^6)}+\frac{(1-\varepsilon)^{16}}{m^3(1-(1-\varepsilon)^8)}=:\Sigma^o_0;
\end{multline*}

\item a pair of nested summations:

\begin{multline*}
 \sum_{r\text{ even}}\sum_{s=r+1}^{\infty}\biggl[\frac{(1-\varepsilon)^{2s+8}}{m}\biggl(\frac{6(1-\varepsilon)^{2+r}+2(1-\varepsilon)^r}{n^2}+\frac{6(1-\varepsilon)^{3+r}+2(1-\varepsilon)^{1+r}}{mn}\biggr)+\\
 \frac{5(1-\varepsilon)^{12+r+3s}}{m^2n}\biggr]+
\end{multline*}
\begin{multline*}
 \sum_{r\text{ odd}}\sum_{s=r+2}^{\infty}\biggl[\frac{(1-\varepsilon)^{2s+8}}{m}\biggl(\frac{6(1-\varepsilon)^{2+r}+2(1-\varepsilon)^r}{nm}+\frac{6(1-\varepsilon)^{3+r}+2(1-\varepsilon)^{1+r}}{m^2}\biggr)+\\
 \frac{5(1-\varepsilon)^{12+r+3s}}{m^3}\biggr]=
\end{multline*}
$$
 \frac{(1-\varepsilon)^{10}(m(6(1-\varepsilon)^2+2)+n(6(1-\varepsilon)^3+2(1-\varepsilon)))}{m^2n^2(1-(1-\varepsilon)^4)(1-(1-\varepsilon)^6)}+\frac{5(1-\varepsilon)^{15}}{m^2n(1-(1-\varepsilon)^6)(1-(1-\varepsilon)^8)}+
$$
$$
 \frac{(1-\varepsilon)^{15}(m(6(1-\varepsilon)^2+2)+n(6(1-\varepsilon)^3+2(1-\varepsilon)))}{nm^3(1-(1-\varepsilon)^4)(1-(1-\varepsilon)^6)}+\frac{5(1-\varepsilon)^{22}}{m^3(1-(1-\varepsilon)^6)(1-(1-\varepsilon)^8)}=:\Sigma^o_1;
$$

\item a 3-tuple of summations:
  $$
  \sum_{r\text{ even}}\sum_{\ell=r+2}^{\infty}\sum_{s=\ell+1}^{\infty} 
  \frac{6(1-\varepsilon)^{2s+\ell+r+12}}{mn^2}+
   \sum_{r\text{ odd}}\sum_{\ell=r+1}^{\infty}\sum_{s=\ell+1}^{\infty} 
  \frac{6(1-\varepsilon)^{2s+\ell+r+12}}{m^2n}+
 $$
 $$
 \sum_{\ell\text{ even}}\sum_{r=\ell}^{\infty}\sum_{s=r+1}^{\infty}
  \frac{4(1-\varepsilon)^{2s+\ell+r+12}}{mn^2}+
 \sum_{\ell\text{ even}}\sum_{r=\ell+1}^{\infty}\sum_{s=r+2}^{\infty}
  \frac{4(1-\varepsilon)^{2s+\ell+r+12}}{m^2n}+
 $$
 $$
  \sum_{r\text{ even}}\sum_{\ell=r+1}^{\infty}\sum_{s=\ell+2}^{\infty} 
  \frac{6(1-\varepsilon)^{2s+\ell+r+12}}{m^2n}+
  \sum_{r\text{ odd}}\sum_{\ell=r+2}^{\infty}\sum_{s=\ell+2}^{\infty} 
  \frac{6(1-\varepsilon)^{2s+\ell+r+12}}{m^3}+
 $$ 
 $$
 \sum_{\ell\text{ odd}}\sum_{r=\ell+1}^{\infty}\sum_{s=r+1}^{\infty}
  \frac{4(1-\varepsilon)^{2s+\ell+r+12}}{m^2n}+
 \sum_{\ell\text{ odd}}\sum_{r=\ell}^{\infty}\sum_{s=r+2}^{\infty}
  \frac{4(1-\varepsilon)^{2s+\ell+r+12}}{m^3}=
 $$
\begin{multline*}
  \frac{(1-\varepsilon)^{14}(m(4+6(1-\varepsilon)^6)+n(4(1-\varepsilon)^{5}+6(1-\varepsilon)^{7}))}{m^2n^2(1-(1-\varepsilon)^4)(1-(1-\varepsilon)^6)(1-(1-\varepsilon)^8)}+\\
  \frac{(1-\varepsilon)^{19}(m(6+4(1-\varepsilon)^2)+n(4(1-\varepsilon)+6(1-\varepsilon)^7))}{m^3n(1-(1-\varepsilon)^4)(1-(1-\varepsilon)^6)(1-(1-\varepsilon)^8)}=:\Sigma^0_2.
\end{multline*}
 
 \end{itemize}


\begin{thebibliography}{99}

\bibitem{Appendix} I. Benjamini, H. Helman Tov, M. Zhukovskii, {\it Global information from local observations of the noisy voter model on a graph}, 2022, arXiv:2207.01224v1.

\bibitem{BL} I. Benjamini, L. Lov\'{a}sz, {\it Global information from local observation}, Proceedings of The 43rd Annual IEEE Symposium on Foundations of Computer Science (2002) 701--710.

\bibitem{BKL} I. Benjamini, G. Kozma, L. Lov\'{a}sz, D. Romik, G. Tardos, {\it Waiting for a bat to fly by (in polynomial time)}, Combinatorics, Probability and Computing, {\bf 15}:5 (2006) 673--683.

\bibitem{Billingsley}  P. Billingsley, {\it Probability and Measure}, Third Edition, J. Wiley \& Sons, 1995.

\bibitem{Bol_degrees} B. Bollob\'{a}s, {\it Degree sequences of random graphs}, Discrete Mathematics, {\bf 31}:1 (1981) 1--19.


\bibitem{Cartwright} D. Cartwright, F. Harary, {\it Structural balance: a generalization of Heider's theory}, Psychological Review {\bf 63} (1956), 277.

\bibitem{Clifford} P. Clifford, A. Sudbury, {\it A model for spatial conflict}, Biometrika {\bf 60} (1973) 581--588.

\bibitem{Donnelly} P. Donnelly, D. Welsh, {\it Finite particle systems and infection models}. Math- ematical Proceedings of the Cambridge Philosophical Society, {\bf 94}:01 (1983) 167--182.

\bibitem{Feller} W. Feller, {\it An Introduction to Probability Theory and Its Applications}, volume 1, J.~Wiley \& Sons, 1975.

\bibitem{Testing} O. Goldreich, {\it Introduction to Property Testing}, Cambridge University Press, 2017.

\bibitem{Granovsky} B.L. Granovsky, N. Madras, {\it The noisy voter model}, Stochastic Processes and their Applications {\bf 55} (1995) 23--43.

\bibitem{Hassin} Y. Hassin, D. Peleg, {\it Distributed probabilistic polling and applications to proportionate agreement}, Information and Computation {\bf 171}:2 (2001) 248--268.

\bibitem{Herschkorn} S.J. Herschkorn, {\it On the modular value and fractional part of a random variable}, Probability in the Engineering and Informational Sciences, {\bf 9}:4 (1995)~551--562.

\bibitem{Liggett} Th.M. Liggett, {\it Interacting particle systems}, Springer, 2005.

\bibitem{Networks} R. Lyons, Y. Peres, {\it Probability on trees and networks}, Cambridge University Press, 2016.

\bibitem{McCulloch} W.S. McCulloch, W. Pitts, {\it A logical calculus of the ideas immanent in nervous activity}, The Bulletin of Mathematical Biophysics {\bf 5} (1943), 115--133.

\bibitem{MW} B.D. McKay, N.C. Wormald, {\it The degree sequence of a random graph. i. the models}, Random Struct. Algorithms, {\bf 11}:2 (1997) 97--117.

\bibitem{Mityagin} B.S. Mityagin, {\it The zero set of a real analytic function}, Mathematical Notes, {\bf 107} (2020) 529--530.

\bibitem{Nachmias} A. Nachmias, A. Shapira, {\it Testing the expansion of a graph}, Information and Computation, {\bf 208}:4 (2010) 309--314.

\bibitem{Nakata} T. Nakata, H Imahayashi, M. Yamashita, {\it Probabilistic local majority voting for the agreement problem on finite graphs}, In Computing and Combinatorics, Springer (1999) 330--338.

\bibitem{Pymar} R. Pymar, N. Rivera, {\it On the stationary distribution of the noisy voter model}, 2021; https://arxiv.org/pdf/2112.01478.pdf.

\bibitem{Sawyer} S. Sawyer, {\it Rates of consolidation in a selectively neutral migration model}, Ann. Probab. {\bf 5} (1977), 486--493.

\end{thebibliography}
\end{document}